\documentclass[
reqno]{amsart}
\usepackage{latexsym,amsmath,amssymb,amscd}
\usepackage[all]{xy}
\usepackage{amsthm}
\usepackage{enumerate}
\usepackage{hyperref}


\setcounter{tocdepth}{2} 
\makeatletter
  \newcommand\@dotsep{4.5}
  \def\@tocline#1#2#3#4#5#6#7{\relax
     \ifnum #1>\c@tocdepth 
     \else
     \par \addpenalty\@secpenalty\addvspace{#2}%
     \begingroup \hyphenpenalty\@M
     \@ifempty{#4}{%
     \@tempdima\csname r@tocindent\number#1\endcsname\relax
        }{%
         \@tempdima#4\relax
           }%
      \parindent\z@ \leftskip#3\relax \advance\leftskip\@tempdima\relax
      \rightskip\@pnumwidth plus1em \parfillskip-\@pnumwidth
       #5\leavevmode\hskip-\@tempdima #6\relax
       \leaders\hbox{$\m@th
       \mkern \@dotsep mu\hbox{.}\mkern \@dotsep mu$}\hfill
       \hbox to\@pnumwidth{\@tocpagenum{#7}}\par
       \nobreak
        \endgroup
         \fi}
\makeatother 


\DeclareMathAlphabet\mathbfcal{OMS}{cmsy}{b}{n}


\newtheorem{theorem}{Theorem}

\newtheorem{lemma}[theorem]{Lemma}
\newtheorem{notation}[theorem]{Notation}

\newtheorem{proposition}[theorem]{Proposition}
\newtheorem{remark}[theorem]{Remark}

\theoremstyle{definition}
\newtheorem{definition}[theorem]{Definition}
\newtheorem{example}[theorem]{Example}

\numberwithin{theorem}{section}
\numberwithin{equation}{section}


\newcommand{\CC}{{\mathbb C}}

\newcommand{\KK}{{\mathbb K}}
\newcommand{\RR}{{\mathbb R}}

\newcommand{\NN}{{\mathbb N}}


\newcommand{\Bc}{{\mathcal B}}

\newcommand{\Dc}{{\mathcal D}}

\newcommand{\Jc}{{\mathcal J}}

\newcommand{\Lc}{{\mathcal L}}

\newcommand{\Oc}{{\mathcal O}}

\newcommand{\Sc}{{\mathcal S}}

\newcommand{\Vc}{{\mathcal V}}

\newcommand{\Wc}{{\mathcal W}}


\newcommand{\Pg}{{\mathfrak P}}

\newcommand{\ag}{{\mathfrak a}}

\renewcommand{\gg}{{\mathfrak g}}
\newcommand{\hg}{{\mathfrak h}}

\newcommand{\n}{{\mathfrak n}}

\newcommand{\pg}{{\mathfrak p}}

\newcommand{\zg}{{\mathfrak z}}


\renewcommand{\1}{{\bf 1}}

\newcommand{\ad}{{\rm ad}}

\newcommand{\card}{{\rm card}\,}

\newcommand{\de}{{\rm d}}

\newcommand{\id}{{\rm id}}
\newcommand{\ie}{{\rm i}}

\newcommand{\jump}{{\rm jump}\,}

\newcommand{\Ker}{{\rm Ker}\,}

\newcommand{\Ran}{{\rm Ran}\,}

\newcommand{\spa}{{\rm span}\,}

\newcommand{\Gr}{{\rm Gr}}
\newcommand{\alg}{{\rm alg}}
\newcommand{\rank}{{\rm rank}\,}

\newcommand{\Sbd}{{\mathfrak S}}

\textheight22cm

\begin{document}

\title[Continuous selection of Lagrangian subspaces]{Continuous selection of Lagrangian subspaces}
\author[Ingrid Belti\c t\u a]{Ingrid Belti\c t\u a}  
\author[Daniel Belti\c t\u a]{Daniel Belti\c t\u a}
\address{Institute of Mathematics ``Simion Stoilow'' 
of the Romanian Academy,   
P.O. Box 1-764, Bucharest, Romania}
\email{ingrid.beltita@gmail.com, Ingrid.Beltita@imar.ro}

\address{Institute of Mathematics ``Simion Stoilow'' 
of the Romanian Academy,   
P.O. Box 1-764, Bucharest, Romania}
\email{beltita@gmail.com, Daniel.Beltita@imar.ro}

\keywords{presymplectic form, isotropic subspace, completely solvable Lie algebra}
\subjclass[2020]{Primary 17B30; Secondary 15A69, 37J37, 53D12}

\begin{abstract}
We study continuous selections of the set-valued map that takes every skew-symmetric bilinear form on a vector space to its corresponding set of maximal isotropic subspaces. 
Applications are made to establishing continuity properties of the Vergne polarizing subalgebras of completely solvable Lie algebras in terms of Schubert cells of suitable Grassmann manifolds.
\end{abstract}

\maketitle


\section{Introduction}

Polarizing subalgebras of Lie algebras and, more generally, Lagrangian subspaces of presymplectic vector spaces, provide a nice illustration of the applications of linear algebra to areas such as symplectic geometry, geometric quantization, and in particular explicit constructions of unitary representations associated to coadjoint orbits of Lie groups. 
These explicit constructions are important not only in representation theory \cite{CG90} but also for integrable systems \cite{BGR17a}, \cite{BGR17b}, some topics in linear partial differential equations \cite{HN85} or construction of frames associated to Lie group representations \cite{Ou19}. 

In the study of topological properties of the unitary dual spaces of Lie groups (cf.,  e.g., \cite{BB21a}, \cite{BB21b}, \cite{BBL17}, \cite{LMB11}), 
the mere existence of polarizations is, however, not enough, 
and one would actually need continuous selections of polarizations, in a sense that will be specified shortly. 
Therefore, in the present paper we study continuous selections of Lagrangian subspaces, with emphasis on the so-called Vergne polarizations, which were first constructed in \cite{Ve70} and played an important role in representation theory  and related areas ever since. 
See for instance \cite{CG90} for the role of Vergne polarizations in representation theory of nilpotent Lie groups and \cite{Ou15} for computational aspects of these polarizations.

In order to describe what we mean here by continuous selections, 
we first recall that 
a presymplectic structure on a finite-dimensional real vector space $\Vc$ is just a skew-symmetric bilinear form $B\colon\Vc\times\Vc\to\RR$. 
The isotropic subspaces of $\Vc$ with respect to $B$ are the linear subspaces $\Sc\subseteq\Vc$ satisfying $B\vert_{\Sc\times\Sc}=0$, 
and every maximal isotropic subspace is called a Lagrangian subspace (or a polarization) of~$\Vc$ with respect to $B$. 
 The set of all presymplectic structures of $\Vc$ can be identified with the linear dual space  $(\wedge^2\Vc)^*$ of the second exterior power of~$\Vc$, 
while the set of all linear subspaces of $\Vc$ has the natural structure of a compact manifold, called the Grassmann manifold of~$\Vc$. 
For every presymplectic structure $B\in(\wedge^2\Vc)^*$ its corresponding set of Lagrangian subspaces, denoted
$\Pg(B)$  is a closed non-empty subset of the Grassmann manifold $\Gr(\Vc)$. 
(See also \cite{AL09} for an infinite dimensional version of $\Pg(B)$.)
In this paper we study continuous selections of the set-valued mapping 
$$ \Pg\colon (\wedge^2\Vc)^*\to 2^{\Gr(\Vc)},\quad B\mapsto \Pg(B)$$
that is, continuous mappings $\sigma\colon D_\sigma\to\Gr(\Vc)$ 
whose domain $D_\sigma\subseteq (\wedge^2\Vc)^*$ should be as large as possible, 
satisfying $\sigma(B)\in\Pg(B)$ for all $B\in D_\sigma$. 
(See Theorems \ref{Lcont} and \ref{Sopt3}.)

When $\Vc$ is the underlying vector space of a Lie algebra $\gg$, we moreover obtain continuous selections whose values are subalgebras of~$\gg$ rather than just linear subspaces (Theorem~\ref{polcont}). 
This is actually the main motivation of the present research, and the applications of our results to the study of $C^*$-algebras of solvable Lie groups will be presented in a sequel to this paper. 

The structure of this paper is as follows: 
In Section~\ref{Sect2} we discuss the semicontinuity properties of the set of Lagrangian subspaces, regarded as a set-valued mapping defined on the set of all presymplectic structures of a fixed vector space (Proposition~\ref{Luppsemi}). 
In Section~\ref{Sect3} we establish the results that we need on continuity of null-spaces of presymplectic structures (Theorem~\ref{nullcont-opt}). 
Section~\ref{Sect4} includes one of our main results on continuous selections of Lagrangian subspaces (Theorem~\ref{Lcont}). 
The relation between that result and the Schubert cells in Grassmann manifolds is explored in Section~\ref{Sect5}, 
where we need to develop a more general version of some results of \cite{Cu88} and \cite{Cu92} in order to prove Theorem~\ref{Sopt3}. 
In Section~\ref{Sect6} we apply the preceding results to establishing continuity properties of Vergne polarizations in completely solvable Lie algebras (Theorem~\ref{polcont}). 
Finally, in Section~\ref{Sect7}, we discuss in some detail the illustration of the general results by three specific examples of Lie algebras: 1. the Lie algebras with abelian hyperplane ideals (whose corresponding simply connected Lie groups are sometimes called generalized $ax+b$-groups, e.g., in \cite{BB18}); 2. the free 2-step nilpotent Lie algebra with three generators (whose Vergne polarizations were also studied  in \cite{Ou15} for arbitrarily many generators, however only on a generic subset of its linear dual); 3. the indecomposable 3-step nilpotent, 5-dimensional Lie algebra. 
In the classification of low-dimensional nilpotent Lie algebras, the last two algebras are denoted by  $\gg_{6,15}$ and $\gg_{5,4}$, respectively, 
cf. \cite{Dix58} and \cite{Ni83}.

\subsection*{General notation}
Unless otherwise mentioned, $\KK$ stands for a field of characteristic zero. 
If $\Vc$ is a vector space over $\KK$, 
we denote by $\Vc^*$ the space of all $\KK$-linear functionals $\xi\colon \Vc\to\KK$, with the corresponding duality pairing 
$\langle\cdot,\cdot\rangle\colon\Vc^*\times\Vc\to\KK$, $\langle \xi,v\rangle:=\xi(v)$. 
Similarly, we denote by $(\wedge^2\Vc)^*$ the set of all skew-symmetric bilinear forms $B\colon \Vc\times\Vc\to\KK$. 
For every $B\in(\wedge^2\Vc)^*$ and $v,w\in\Vc$ we write $v\perp_B w$ if $B(v,w)=0$. 
For every subset $S\subseteq\Vc$ we define 
$$S^{\perp_B}:=\{w\in\Vc\mid v\perp_B w\text{ for all }v\in S\}.$$ 
We then define the \emph{null-space}  mapping 
$$N\colon (\wedge^2\Vc)^*\to\Gr(\Vc),\quad N(B):=\Vc^{\perp_B}$$
where $\Gr(\Vc)$ is the Grassmann manifold of $\Vc$, that is, the set of all linear subspaces of $\Vc$. 
For every integer $k\ge0$ we also denote $\Gr_k(\Vc):=\{\Sc\in\Gr(\Vc)\mid\dim_{\KK}\Sc=k\}$ 
and we define 
$$ (\wedge^2\Vc)^*_k:=N^{-1}(\Gr_k(\Vc))=\{B\in \wedge^2\Vc\mid \dim N(B)=k\}.$$
If $n:=\dim_\KK\Vc<\infty$, then one has the disjoint union 
$$(\wedge^2\Vc)^*_k=\bigsqcup_{k=0}^n(\wedge^2\Vc)^*_k.$$
If moreover $\KK\in\{\RR,\CC\}$, then  $(\wedge^2\Vc)^*_0$ is the set of all symplectic structures of $\Vc$ and this is a dense open subset of $(\wedge^2\Vc)^*$. 

We denote by $M_n(\KK)$ the space of all square matrices $A=(a_{ij})_{1\le i,j\le n}$  with entries in $\KK$, 
and the transpose of such a matrix is $A^\top:=(a_{ji})_{1\le i,j\le n})$.

\section{Upper semicontinuity of the set of Lagrangian subspaces}
\label{Sect2}

In this section we introduce that set-valued mapping that takes every presymplectic form on a fixed vector space to its corresponding set of Lagrangian subspaces and we establish the basic semicontinuity property of that mapping that holds on its entire domain
of definition. 

Let $\Vc$ be any finite-dimensional vector space over $\KK\in\{\RR,\CC\}$ 
and we fix a closed subset $F\subseteq\Gr(\Vc)$. 
\begin{definition}\label{Lpolarizations}
	\normalfont
	For every $B\in(\wedge^2\Vc)^*$ we define 
	$$\begin{aligned}
	\Sbd_F(B)
	&:=\{\Wc\in F\mid B\vert_{\Wc\times\Wc}=0\}, \\
	\Pg_F(B)
	&:=\{\pg\in \Sbd_F(B)\mid \dim\pg =\max_{\Wc\in\Sbd_F(B)}\dim\Wc\} \\
	\end{aligned}$$
	hence $\Sbd_F(B)$ is the set of all $B$-isotropic subspaces that belong to~$F$, while $\Pg_F(B)$ is  the set of all \emph{$F$-Lagrangian subspaces} (or \emph{$F$-polarizations}) with respect to~$B$. 
	If $F=\Gr(\Vc)$, then we write simply $\Sbd(B)$ and $\Pg(B)$ instead of 
	$\Sbd_{\Gr(\Vc)}(B)$ and $\Pg_{\Gr(\Vc)}(B)$, respectively. 
\end{definition}

\begin{remark}\label{Lgrcomp}
	\normalfont
	If  $\lim\limits_{j\to\infty}\Wc_j=\Wc$ in $\Gr(\Vc)$, 
	then for every $w\in\Wc$ there exist $w_j\in\Wc_j$ for all $j\in\NN$ with 
	$\lim\limits_{j\to\infty}w_j=w$ in $\Vc$, as can be seen by using
	a local chart of the smooth manifold $\Gr(\Vc)$. 
\end{remark} 

\begin{notation}
	\normalfont
	For any set $X$ we denote by $2^X$ the set of all subsets of $X$. 
	If moreover $X$ is a topological space, then for every sequence $A_0, A_1,\dots\in 2^X$ we define 
	$$\liminf_{j\to\infty}A_j:=\Bigl\{a\in X\mid 
	\Bigl(\exists (a_j)_{j\in\NN}\in\prod\limits_{j\in\NN}A_j\Bigr)\ \lim_{j\to\infty}a_j=a\Bigr\} $$
	and 
	$$\limsup_{j\to\infty}A_j:=\bigcup_{\theta\in S_{\uparrow}(\NN)}\liminf_{j\to\infty}A_{\theta(j)},$$
	where we denote by $S_{\uparrow}(\NN)$ the set of all strictly increasing functions $\theta\colon\NN\to\NN$. 
	
	We always have $\liminf\limits_{j\to\infty}A_j\subseteq\limsup\limits_{j\to\infty}A_j$, and if these sets are equal, 
	then we denote them by $\lim\limits_{j\to\infty}A_j$. 
	Other basic properties of these notions can be found in \cite[\S 29]{Ku66} with differing notation. 
\end{notation}

\begin{remark}
	\normalfont
	If there exists $A\in 2^X$ such that $A_j=A$ for all $j\in\NN$, 
	then we have $\liminf\limits_{j\to\infty}A_j=\limsup\limits_{j\to\infty}A_j$ ($=\lim\limits_{j\to\infty}A_j$). 
	If moreover $X$ is a metrizable space, then $\lim\limits_{j\to\infty}A_j$ is equal to the closure of $A$. 
\end{remark}

\begin{definition}
	\normalfont 
	Assume that $X$ is a compact metric space. 
	For any metric space~$T$ and any function 
	$f\colon T\to 2^X$ whose values are closed subsets of $X$, 
	we say that $f$ is \emph{upper semicontinuous} if 
	whenever $\lim\limits_{j\to\infty}t_j=t$ in $T$, 
	we have $\limsup\limits_{j\to\infty}f(t_j)\subseteq f(t)$. 
	
	On the other hand, the function $f$ is \emph{lower semicontinuous} 
	if whenever $\lim\limits_{j\to\infty}t_j=t$ in $T$, 
	we have $f(t)\subseteq \liminf\limits_{j\to\infty}f(t_j)$.
\end{definition}

\begin{remark}
	\normalfont
	It follows by \cite[\S 43, II, Th. 1]{Ku68} that the above definition 
	is equivalent to the definition of upper (respectively, lower) semicontinuity in \cite[\S 18, I]{Ku66}, 
	namely that for every open set $D\subseteq X$ the set 
	$f^{-1}(D):=\{t\in T\mid f(t)\subseteq D\}$ 
	is open (respectively, closed) in~$T$.  
\end{remark}

\begin{lemma}\label{Lacc}
	If $\lim\limits_{i\to\infty}B^{(i)}=B$ in $(\wedge^2\Vc)^*$ with $N(B^{(i)})\in F$ for every $i\in\NN$, then the following asertions hold: 
	\begin{enumerate}[{\rm(i)}]
		\item\label{Lacc_item1} 
		If $\Wc$ is any cluster point of the sequence $\{N(B^{(i)})\}_{i\in\NN}$ in $\Gr(\Vc)$, 
		then $\Wc\in F$ and $\Wc\subseteq N(B)$.  
		\item\label{Lacc_item2} 
		If  
		$\Wc^{(i)}\in\Sbd_F(B^{(i)})$ is arbitrarily selected, 
		 any cluster point $\Wc$ of the sequence $\{\Wc^{(i)}\}_{j\in\NN}$ in $\Gr(\Vc)$ belongs to $\Sbd_F(B)$. 
		\item\label{Lacc_item3} 
		If for all $i\in\NN$ we have $\dim N(B^{(i)})=\dim N(B)$ and we select arbitrarily 
		$\pg^{(i)}\in\Pg_F(B^{(i)})$, 
		then  any cluster point $\pg$ of the sequence $\{\pg^{(i)}\}_{i\in\NN}$ in $\Gr(\gg)$ belongs to $\Pg_F(\xi)$.
	\end{enumerate}
\end{lemma}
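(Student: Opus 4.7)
The plan is to prove all three parts by a common strategy: extract a convergent subsequence in the compact manifold $\Gr(\Vc)$, use closedness of $F$ to place the limit in $F$, and pass to the limit in the isotropy relation $B(\cdot,\cdot)=0$ via the joint continuity of the evaluation map $(\wedge^2\Vc)^*\times\Vc\times\Vc\to\KK$, $(B,v,w)\mapsto B(v,w)$, combined with the vector-lifting property recorded in Remark~\ref{Lgrcomp}.

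For \eqref{Lacc_item1}, I pick $\theta\in S_\uparrow(\NN)$ with $N(B^{(\theta(j))})\to\Wc$ in $\Gr(\Vc)$; closedness of $F$ immediately gives $\Wc\in F$. To show $\Wc\subseteq N(B)$, fix $w\in\Wc$ and, using Remark~\ref{Lgrcomp}, select $w_j\in N(B^{(\theta(j))})$ with $w_j\to w$. Then $B^{(\theta(j))}(v,w_j)=0$ for every $v\in\Vc$, and letting $j\to\infty$ yields $B(v,w)=0$, so $w\in N(B)$. Part \eqref{Lacc_item2} runs on the same template applied to pairs of vectors: after $\Wc^{(\theta(j))}\to\Wc$, closedness gives $\Wc\in F$, and for any $w,w'\in\Wc$ one lifts to $w_j,w'_j\in\Wc^{(\theta(j))}$; the relation $B^{(\theta(j))}(w_j,w'_j)=0$ passes in the limit to $B(w,w')=0$, so $\Wc\in\Sbd_F(B)$.

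Part \eqref{Lacc_item3} splits into two things: the membership $\pg\in\Sbd_F(B)$, which is already given by \eqref{Lacc_item2}, and the dimension equality $\dim\pg=\max_{\Wc\in\Sbd_F(B)}\dim\Wc$. After extracting $\pg^{(\theta(j))}\to\pg$, the clopen stratification $\Gr(\Vc)=\bigsqcup_k\Gr_k(\Vc)$ forces $\dim\pg^{(\theta(j))}=\dim\pg$ for $j$ large, and therefore $\dim\pg$ equals $\max_{\Wc\in\Sbd_F(B^{(\theta(j))})}\dim\Wc$ eventually. The inequality $\dim\pg\le\max_{\Wc\in\Sbd_F(B)}\dim\Wc$ is automatic from $\pg\in\Sbd_F(B)$, so only the reverse is at issue; this is where the hypothesis $\dim N(B^{(i)})=\dim N(B)$ enters. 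First, applying \eqref{Lacc_item1} together with this dimension hypothesis and the same clopenness argument forces the entire sequence $(N(B^{(i)}))_i$ to converge to $N(B)$ in $\Gr(\Vc)$. Then I would appeal to Proposition~\ref{Luppsemi} to conclude that, under this rank-constancy of the radical, $\max_{\Wc\in\Sbd_F(B^{(i)})}\dim\Wc=\max_{\Wc\in\Sbd_F(B)}\dim\Wc$ for $i$ large, which closes the argument. Establishing this last equality---a lower-semicontinuity statement for the maximal isotropic dimension under constant radical dimension---is the step I expect to be the main obstacle, and it is what makes the rank-constancy hypothesis essential.
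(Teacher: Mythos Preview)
Your arguments for \eqref{Lacc_item1} and \eqref{Lacc_item2} are correct and coincide with the paper's proof essentially verbatim.

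For \eqref{Lacc_item3} there is a genuine gap. Your appeal to Proposition~\ref{Luppsemi} is circular: that proposition is proved precisely by invoking Lemma~\ref{Lacc}\eqref{Lacc_item3}. Even setting aside the circularity, upper semicontinuity of the set-valued map $\Pg_F$ does not obviously yield the equality $\max_{\Wc\in\Sbd_F(B^{(i)})}\dim\Wc=\max_{\Wc\in\Sbd_F(B)}\dim\Wc$ that you need; upper semicontinuity concerns containment of limit sets, not constancy of dimensions.

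The paper's route avoids this entirely. Writing $m:=\dim\Vc$ and $m_0:=\dim N(B)=\dim N(B^{(i)})$, one uses the standard fact that every Lagrangian subspace for a presymplectic form with $m_0$-dimensional radical has dimension $k:=(m+m_0)/2$. Thus $\pg^{(i)}\in\Gr_k(\Vc)$ for all $i$, hence (by your own clopenness observation) $\pg\in\Gr_k(\Vc)$; since $\pg\in\Sbd_F(B)$ by \eqref{Lacc_item2} and $k$ is also the dimension of any $F$-polarization at $B$, one concludes $\pg\in\Pg_F(B)$. No semicontinuity of the maximal isotropic dimension is needed---the hypothesis $\dim N(B^{(i)})=\dim N(B)$ pins down the polarization dimension \emph{a priori} via the explicit formula. (Implicit here is that the closed sets $F$ under consideration actually contain full Lagrangians, which holds in all the applications; this is what makes the $F$-polarization dimension equal to $k$.) Your detour through $N(B^{(i)})\to N(B)$ is correct but unnecessary.
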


\begin{proof}
	For Assertion~\eqref{Lacc_item1}, we have $N(B^{(i)})\in F$ for every $i\in\NN$, 
	hence $\Wc\in F$ since $F$ is a closed subset of $\Gr(\Vc)$. 
	To prove that $\Wc\subseteq N(B)$, let $w\in\Wc$ be arbitrarily chosen. 
	Since $\Wc$ is a cluster point of the sequence $\{N(B^{(i)})\}_{i\in\NN}$, 
	we may assume 
	$\Wc=\lim\limits_{i\to\infty}N(B^{(i)})$ in $\Gr(\gg)$, 
	by selecting a suitable subsequence. 
	Then by Remark~\ref{Lgrcomp} 
	there exist vectors $w_i\in N(B^{(i)})$ for all $i\in\NN$, with $\lim\limits_{i\to\infty}w_i=w$, 
	hence 
	$$(\forall v\in\Vc)\quad B(w,v)=\lim\limits_{i\to\infty} B^{(i)}(w_i,v)=0$$
	and this shows that $w\in N(B)$. 
	
	For Assertion~\eqref{Lacc_item2}, again by selecting a suitable subsequence, 
	we may assume $\Wc=\lim\limits_{i\to\infty}\Wc^{(i)}$ in $\Gr(\Vc)$, 
	which implies $\Wc\in F$ since $F\subseteq\Gr(\Vc)$ is closed and
	 $\Wc^{(i)}\in\Sbd_F(B^{(i)})\subseteq F$ for all $i\in\NN$. 
	Moreover for all $v,w\in\pg$ there exist $v_i,w_i\in\Wc^{(i)}$ for all $i\in\NN$ with 
	$\lim\limits_{i\to\infty}v_i=v$ and $\lim\limits_{i\to\infty}w_i=w$ 
	(again by Remark~\ref{Lgrcomp}). 
	Therefore 
	$$B(v,w)=\lim\limits_{i\to\infty}B_i(v_i,w_i)=0$$
	and thus $\Wc\in\Sbd_F(\xi)$. 
	
	For Assertion~\eqref{Lacc_item3}, we may assume again 
	$\pg=\lim\limits_{i\to\infty}\pg^{(i)}$ in $\Gr(\Vc)$.  
	If $m:=\dim\Vc$ and $m_0:=\dim N(B)$, and $k:=(m+m_0)/2$, 
	then we have $\pg^{(i)}\in\Gr_k(\Vc)$ for all $i\in\NN$, hence also $\pg\in\Gr_k(\gg)$, 
	that is, $\dim\pg$ is equal to the dimension of any $F$-polarization at $B\in(\wedge^2\Vc)^*$. 
	On the other hand $\pg\in\Sbd_F(B)$ by Assertion~\eqref{Lacc_item2}, 
	hence $\pg\in\Pg_F(\xi)$, and this concludes the proof. 
\end{proof}

\begin{proposition}\label{Luppsemi}
	For each integer $k\ge 0$, the map 
	$\Pg_F\vert_{(\wedge^2\Vc)^*_k}\colon(\wedge^2\Vc)^*_k\to 2^{\Gr_{\alg}(\gg)}$, $B\mapsto \Pg_F(B)$
	is upper semicontinuous. 
\end{proposition}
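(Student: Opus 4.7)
The plan is to reduce the claim directly to Lemma~\ref{Lacc}\eqref{Lacc_item3}. Fix $k \ge 0$ and suppose $B^{(i)} \to B$ in $(\wedge^2\Vc)^*_k$, so in particular $\dim N(B^{(i)}) = k = \dim N(B)$ for all $i \in \NN$. By definition of upper semicontinuity (in the sense recalled right before the proposition, via $\limsup$), we must show that
$$\limsup_{i\to\infty} \Pg_F(B^{(i)}) \subseteq \Pg_F(B).$$

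To this end, let $\pg \in \limsup_{i\to\infty}\Pg_F(B^{(i)})$ be arbitrary. By definition of the upper limit of a sequence of subsets, there exist a strictly increasing function $\theta \in S_{\uparrow}(\NN)$ and a selection $\pg^{(\theta(i))} \in \Pg_F(B^{(\theta(i))})$ such that $\pg^{(\theta(i))} \to \pg$ in $\Gr(\Vc)$. Passing to the corresponding subsequence we still have $B^{(\theta(i))} \to B$, and the assumption $B, B^{(\theta(i))} \in (\wedge^2\Vc)^*_k$ guarantees $\dim N(B^{(\theta(i))}) = \dim N(B)$ for every $i \in \NN$, which is exactly the hypothesis of Lemma~\ref{Lacc}\eqref{Lacc_item3}.

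Applying that lemma to the sequence $\{B^{(\theta(i))}\}_{i\in\NN}$ and the selection $\{\pg^{(\theta(i))}\}_{i\in\NN}$, the cluster point $\pg$ belongs to $\Pg_F(B)$. Since $\pg$ was arbitrary in $\limsup_{i\to\infty}\Pg_F(B^{(i)})$, the required inclusion follows. The only subtle point in the argument is the role of the hypothesis $B^{(i)} \in (\wedge^2\Vc)^*_k$: it is precisely what forces the dimension of $F$-polarizations to remain constant along the sequence, so that a limit of Lagrangian subspaces is still Lagrangian (as opposed to merely isotropic, as would follow from Lemma~\ref{Lacc}\eqref{Lacc_item2} alone). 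No further computation is needed beyond invoking the lemma.
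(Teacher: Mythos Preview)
Your argument is correct and follows the same route as the paper: both reduce the upper-semicontinuity claim directly to Lemma~\ref{Lacc}\eqref{Lacc_item3}. The only point you skip, which the paper does make explicit, is that the definition of upper semicontinuity in use presupposes that the values $\Pg_F(B)$ are \emph{closed} subsets of $\Gr(\Vc)$; the paper establishes this by applying Lemma~\ref{Lacc}\eqref{Lacc_item3} to the constant sequence $B^{(i)}=B$, so that any cluster point of a sequence in $\Pg_F(B)$ again lies in $\Pg_F(B)$. Adding that one sentence makes your write-up complete.
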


\begin{proof}
	Using Lemma~\ref{Lacc}\eqref{Lacc_item3} for constant sequences of vectors in $(\wedge^2\Vc)^*$, 
	it follows that $\Pg_F(B)$ is a closed subset of $\Gr(\gg)$ for every $B\in(\wedge^2\Vc)^*$. 
	Then, using again Lemma~\ref{Lacc}\eqref{Lacc_item3} for convergent sequences in $(\wedge^2\Vc)^*_k$, 
	we obtain the assertion. 
\end{proof}

\section{Continuity of null-spaces}
\label{Sect3}

In this section we establish a key result in constructing a continuous selection of Lagrangian subspaces, in the form that will be needed later on.

\begin{remark}[Grassmannian of a complexified vector space]\label{complexif}
	\normalfont
	Let $\Vc$ be any finite-dimensional real vector space 
	with its complexification $\Wc:=\CC\otimes_{\RR}\Vc=\Vc\dotplus\ie \Vc$. 
	Fix some integer $k$ with $1\le k\le \dim\Vc$. 
	We will denote by $\Gr_k(\Vc)$ the set of all $k$-dimensional linear subspaces of~$\Vc$, 
	and by $\Gr_k(\Wc)$ the set of all $k$-dimensional \emph{complex} linear subspaces of~$\Wc$.
	
	The canonical conjugation of $\Wc$ associated with $\Vc$, 
	$$C\colon\Wc\to\Wc,\quad C(x+\ie y)=x-\ie y,$$
	gives rise to a diffeomorphism 
	$$\alpha_C\colon \Gr_k(\Wc)\to \Gr_k(\Wc),\quad \alpha_C(Z)=C(Z)$$
	which is involutive, in the sense that $\alpha_C\circ\alpha_C=\id_{\Gr_k(\Wc)}$. 
	Regarding the Grassmann manifolds as homogeneous spaces, 
    the complexification map 
	$$\Gr_k(\Vc)\to\Gr_k(\Wc),\quad X\mapsto\CC\otimes_{\RR}X=X+\ie X $$
	is a diffeomorphism onto its image, 
	and its image is the real submanifold of $\Gr_k(\Wc)$ defined as the fixed-point set of the map $\alpha_C$. 
\end{remark}

\begin{lemma}\label{cont}
	Let $n\ge 1$ be any integer and 
	$\tau\colon T\to M_n(\RR)$ be any continuous map on some topological space $T$, 
	satisfying the following conditions:
	\begin{enumerate}[{\rm(i)}]
		\item for all $t\in T$ we have 
		$\tau(t)^\top=-\tau(t)$; 
		\item there exists an integer $k\ge 1$ with $\dim(\Ker \tau(t))=k$ for all $t\in T$.  
	\end{enumerate}
	Then the map 
	$$\widetilde{\tau}\colon T\to\Gr_k(\RR^n),\quad \widetilde{\tau}(t):=\Ker\tau(t)$$
	is continuous. 
\end{lemma}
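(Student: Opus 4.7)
The plan is to reduce the statement to a local question at each $t_0 \in T$, then exploit the constant-rank hypothesis by isolating an invertible minor. Since all matrices are $n\times n$ and $\dim\Ker\tau(t)=k$ identically, one has $\rank\tau(t)=n-k$ for every $t$. Choose index sets $R,C\subseteq\{1,\dots,n\}$ of size $n-k$ such that the submatrix $\tau(t_0)_{R,C}$ is invertible; this is possible because $\rank\tau(t_0)=n-k$. By continuity of $\tau$ and of the determinant, $\det\tau(t)_{R,C}\ne 0$ on some open neighborhood $U$ of $t_0$.

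On $U$, I would establish the identification $\Ker\tau(t)=\Ker\tau(t)_{R,:}$: the inclusion $\subseteq$ is trivial, while $\tau(t)_{R,:}$ contains an invertible $(n-k)\times(n-k)$ minor and hence has rank at least $n-k$, which combined with $\rank\tau(t)_{R,:}\le\rank\tau(t)=n-k$ forces equality of the two ranks and therefore of the two kernels. For $v\in\Ker\tau(t)_{R,:}$, partitioning coordinates according to $C$ and $C^c$, the system $\tau(t)_{R,:}v=0$ reads
\[
\tau(t)_{R,C}\, v_C = -\tau(t)_{R,C^c}\, v_{C^c},
\]
so Cramer's rule yields $v_C=-\tau(t)_{R,C}^{-1}\tau(t)_{R,C^c}\, v_{C^c}$, continuously in $t$.

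To convert this into continuity into the Grassmannian, I would feed the standard basis of $\RR^{C^c}\cong\RR^k$ successively into $v_{C^c}$, producing a continuous family of $k$-frames $(v_1(t),\dots,v_k(t))$ in $\RR^n$ whose entries along $C^c$ form the identity matrix, so they are linearly independent at every $t\in U$ and span $\Ker\tau(t)$. Equivalently, $\Ker\tau(t)$ is the graph of the linear map $\RR^{C^c}\to\RR^C$ with matrix $-\tau(t)_{R,C}^{-1}\tau(t)_{R,C^c}$, so in the standard affine chart of $\Gr_k(\RR^n)$ attached to the decomposition $\RR^n=\RR^{C^c}\oplus\RR^C$ the coordinates of $\widetilde\tau(t)$ vary continuously with $t$. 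The only conceptual step that needs care is the rank equality $\Ker\tau(t)=\Ker\tau(t)_{R,:}$ on $U$; the rest is routine linear algebra. Note that the skew-symmetry hypothesis $\tau(t)^\top=-\tau(t)$ plays no role in the proof itself and is recorded only in view of the intended application to null-spaces of presymplectic forms.
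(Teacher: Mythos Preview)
Your argument is correct and takes a genuinely different route from the paper's. You work directly with an invertible $(n-k)\times(n-k)$ minor, show via the rank sandwich that $\Ker\tau(t)=\Ker\tau(t)_{R,:}$ on a neighborhood, and then read off $\Ker\tau(t)$ as the graph of the continuously varying linear map $-\tau(t)_{R,C}^{-1}\tau(t)_{R,C^c}$ in the standard affine chart of $\Gr_k(\RR^n)$ attached to $\RR^n=\RR^{C^c}\oplus\RR^C$. This is entirely elementary and, as you observe, does not use skew-symmetry at all: it proves the more general fact that the kernel of any continuous constant-rank family of matrices varies continuously. The paper instead complexifies, uses that $\tau(t)^\top=-\tau(t)$ makes $\tau_{\CC}(t)$ skew-adjoint (hence normal), and obtains the orthogonal projection onto $\Ker\tau_{\CC}(t)$ as a Riesz contour integral $\frac{1}{2\pi\ie}\int_\Gamma(z\1-\tau_{\CC}(t))^{-1}\de z$; the constant kernel dimension forces $0$ to be the only eigenvalue inside $\Gamma$ for $t$ near $t_0$, so this projection is exactly the kernel projection and varies continuously, after which one descends to $\Gr_k(\RR^n)$ via the complexification embedding. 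The paper's approach thus genuinely consumes the skew-symmetry hypothesis (to identify the Riesz projection with the kernel projection) and ties the result to perturbation theory in the sense of Kato, whereas your approach is shorter, needs no spectral theory or complexification, and yields a strictly stronger statement.
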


\begin{proof}
	We regard the values of $\tau$ as skew-adjoint operators on the complex Hilbert space $\CC^n$ 
	with its canonical scalar product, and denote by $\tau_{\CC}\colon T\to\Bc(\CC^n)$ 
	the map obtained in this way, which is clearly continuous since so is $\tau$. 
	Then for every $t\in T$ and $x,y\in\RR^n$ we have 
	$(\tau_{\CC}(t))(x+\ie y)=\tau(t)x+\ie \tau(t)y$ with $\tau(t)x,\tau(t)y\in\RR^n$, 
	and this implies that 
	\begin{equation}\label{cont_proof_eq0}
	(\forall t\in T)\quad\Ker\tau_{\CC}(t)=(\Ker\tau(t))\dotplus\ie(\Ker\tau(t)),
	\end{equation} 
	hence 
	\begin{equation}\label{cont_proof_eq1}
	(\forall t\in T)\quad \dim_{\CC}(\Ker\tau_{\CC}(t))=\dim_{\RR}(\Ker \tau(t))=k.
	\end{equation}
	On the other hand, if we denote by $E_0(t)\in\Bc(\CC^n)$ the orthogonal projection 
	onto $\Ker\tau_{\CC}(t)$, then the above equalities imply that the map 
	$E_0\colon T\to\Bc(\CC^n)$ is continuous at any $t_0\in T$.  
	In fact, if we denote by $\Gamma$ any circle in $\CC$ with its center at $0$ 
	and whose exterior contains all the non-zero eigenvalues of $\tau_{\CC}(t_0)$, 
	then we have 
	$$E_0(t_0)=\frac{1}{2\pi\ie}\int\limits_\Gamma (z\1-\tau_{\CC}(t_0))^{-1}\de z$$
	and this implies that there exists a neighborhood $V$ of $t_0\in T$ 
	such that for every $t\in T$ the sum of algebraic multiplicities 
	of egenvalues of $\tau_{\CC}(t)$ contained in the interior of $\Gamma$ is equal to the rank of the projection $E_0(t_0)$, 
	and moreover the spectral projecion of $\tau_{\CC}(t)$ corresponding to the interior of $\Gamma$ 
	depends continuously on $t\in V$ 
	(see \cite[Ch. II, \S 5, Eq. (5.2)]{Ka82}). 
	But then \eqref{cont_proof_eq1} implies that for every $t\in V$ 
	the only eigenvalue of $\tau_{\CC}(t)$ which belongs to the interior of $\Gamma$ is $0$, 
	and then the map $V\to\Bc(\CC^n)$, $t\mapsto E_0(t)$, is continous. 
	Consequently the map 
	$$T \to\Gr_k(\CC^n),\quad t\mapsto\Ker\tau_{\CC}(t)$$
	is continuous (see also \cite[Th. I-2-6]{FGP94}). 
	Now, using \eqref{cont_proof_eq0} and Remark~\ref{complexif}, 
	one obtains the assertion. 
\end{proof}

\begin{proposition}\label{nullcont}
	Let $\KK\in\{\RR,\CC\}$ and assume that $\Vc$ is a $\KK$-vector space with $n:=\dim_\KK\Vc<\infty$. 
	Then the mapping 
	$(\wedge^2\Vc)^*_k\to\Gr_k(\gg)$, $B\mapsto N(B)$, 
	is continuous for $k=0,1,\dots,n$. 
\end{proposition}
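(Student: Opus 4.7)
The plan is to reduce the proposition to Lemma~\ref{cont}. Fix an ordered basis $e_1,\dots,e_n$ of $\Vc$. This yields a linear (hence continuous) isomorphism
$$\alpha\colon(\wedge^2\Vc)^*\to\{A\in M_n(\KK)\mid A^\top=-A\},\quad \alpha(B):=(B(e_i,e_j))_{1\le i,j\le n},$$
identifying $N(B)$ with $\Ker\alpha(B)\in\Gr_k(\KK^n)$ and $\Gr_k(\Vc)$ with $\Gr_k(\KK^n)$. In particular, on $(\wedge^2\Vc)^*_k$ the map $B\mapsto\dim\Ker\alpha(B)$ is constantly equal to $k$.

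For $\KK=\RR$, the conclusion is immediate from Lemma~\ref{cont} applied with $T:=(\wedge^2\Vc)^*_k$ and $\tau:=\alpha\vert_T$: both hypotheses of that lemma are satisfied by construction, and the conclusion is precisely the continuity of $B\mapsto N(B)$.

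For $\KK=\CC$, Lemma~\ref{cont} does not apply directly, because a complex skew-symmetric matrix is not skew-Hermitian in general and may fail to be diagonalizable. I would handle this case by passing to the positive semi-definite Hermitian matrix $\alpha(B)^*\alpha(B)$, which depends continuously on $B$ and satisfies $\Ker(\alpha(B)^*\alpha(B))=\Ker\alpha(B)$, so its kernel dimension is constantly $k$ on $(\wedge^2\Vc)^*_k$. The spectral-projection argument from the proof of Lemma~\ref{cont} then carries over almost verbatim to this Hermitian family: enclose the eigenvalue $0$ of $\alpha(B)^*\alpha(B)$ by a small circle $\Gamma$ disjoint from its positive eigenvalues; by \cite[Ch.~II, \S 5]{Ka82}, the spectral projection inside $\Gamma$ depends continuously on $B$; its rank equals $k$ by the constant kernel dimension, so it is exactly the orthogonal projection onto $\Ker\alpha(B)=N(B)$. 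Continuity of this projection in turn yields the continuity of $B\mapsto N(B)$ in $\Gr_k(\CC^n)$.

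The main obstacle is precisely the complex case. A naive attempt to reduce it to the real case by viewing $\Vc$ as a $2n$-dimensional real vector space fails because the real representative of a complex skew-symmetric operator is not itself skew-symmetric; the detour through $\alpha(B)^*\alpha(B)$ (equivalently, an appeal to the general constant-rank continuity result for kernels of continuous matrix families) is what circumvents this issue.
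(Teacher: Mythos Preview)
Your proof is correct and, for $\KK=\RR$, follows the paper's approach essentially verbatim: represent $B$ by its skew-symmetric Gram matrix with respect to a fixed basis and invoke Lemma~\ref{cont}. The paper does the same, phrasing it via the linear map $\tau\colon(\wedge^2\Vc)^*\to\Lc(\Vc,\Vc^*)$, $(\tau(B))x=B(x,\cdot)$, and then appealing to Lemma~\ref{cont} without distinguishing between $\KK=\RR$ and $\KK=\CC$.

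You correctly flag a subtlety the paper passes over: Lemma~\ref{cont} is stated and proved only for real skew-symmetric matrices, whose complexifications are skew-Hermitian (hence normal), whereas a genuinely complex skew-symmetric matrix satisfies $A^\top=-A$ but not $A^*=-A$ in general and need not be normal or even diagonalizable, so the Riesz-projection argument in the proof of Lemma~\ref{cont} does not apply to it directly. Your detour through the positive semi-definite Hermitian family $\alpha(B)^*\alpha(B)$, using $\Ker(\alpha(B)^*\alpha(B))=\Ker\alpha(B)$, is a clean and standard fix---indeed it proves the general fact that the kernel of any continuous constant-rank matrix family varies continuously, with no skew-symmetry needed. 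So while your overall strategy matches the paper's, your handling of the complex case is more careful and in fact more general than what the paper writes down.
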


\begin{proof}
	Let $\tau\colon(\wedge^2\Vc)^*\to\Lc(\Vc,\Vc^*)$, $(\tau(B))x=B(x,\cdot)$. 
	It is clear that $\langle\tau(B)v,w\rangle=-\langle\tau(B)w,v\rangle$ for all $B\in (\wedge^2\Vc)^*$ and $v,w\in\Vc$. 
	Therefore, if we select any basis in $\Vc$ and we use its dual basis in $\Vc^*$ 
	in order to write the values of $\tau$ as square matrices, then $\tau(t)$ will be given by 
	a skew-symmetric matrix for all $B\in (\wedge^2\Vc)^*$. 
	
	On the other hand, we have  that
	$N(B)=\Ker(\tau(B))$ for every $B\in (\wedge^2\Vc)^*$.
	Thus the assertion follows by Lemma~\ref{cont}. 
\end{proof}

\begin{theorem}
\label{nullcont-opt}
	Let $\Vc$ be a vector space over $\KK\in\{\RR,\CC\}$ with $n:=\dim_\KK\Vc<\infty$, and let $S\subseteq(\wedge^2\Vc)^*$ be any subset. 
Then the mapping $N\vert_S\colon S\to\Gr(\Vc)$ is continuous 
if and only if for every $k\in\{0,\dots,n\}$ the set $S\cap(\wedge^2\Vc)^*_k$ is relatively closed in~$S$. 
\end{theorem}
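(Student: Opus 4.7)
The plan is to reduce everything to Proposition~\ref{nullcont} via the observation that the stratification
$$(\wedge^2\Vc)^*=\bigsqcup_{k=0}^n(\wedge^2\Vc)^*_k$$
is by definition the partition pulled back under $N$ from the dimension-indexed decomposition $\Gr(\Vc)=\bigsqcup_{k=0}^n\Gr_k(\Vc)$, and that each $\Gr_k(\Vc)$ is a connected component of $\Gr(\Vc)$ and in particular is clopen in $\Gr(\Vc)$. So the whole statement should follow formally, since $\Gr(\Vc)$ is a disjoint union of finitely many clopen pieces, and a map into such a union is continuous iff its restriction to the preimage of each piece is continuous.

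For the ``only if'' direction, I would argue that if $N\vert_S$ is continuous then $S\cap(\wedge^2\Vc)^*_k=(N\vert_S)^{-1}(\Gr_k(\Vc))$ is relatively clopen in~$S$, hence in particular relatively closed. This needs no input beyond the clopenness of $\Gr_k(\Vc)\subseteq\Gr(\Vc)$.

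For the converse, set $S_k:=S\cap(\wedge^2\Vc)^*_k$ and assume each $S_k$ is relatively closed in~$S$. Since $S=\bigsqcup_{k=0}^n S_k$ is a \emph{finite} disjoint union, the complement $S\setminus S_k=\bigcup_{j\ne k}S_j$ of any single piece is itself a finite union of relatively closed sets, hence relatively closed; equivalently, every $S_k$ is also relatively open in~$S$. Therefore $\{S_k\}_{k=0}^n$ is a finite relatively clopen cover of~$S$, so continuity of $N\vert_S$ is equivalent to continuity of each $N\vert_{S_k}$. But $N\vert_{S_k}$ is the restriction of $N\vert_{(\wedge^2\Vc)^*_k}$, which is continuous by Proposition~\ref{nullcont}.

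I do not see a genuine obstacle: once Proposition~\ref{nullcont} is available, the remaining content is just the general principle that a map into a finite topological disjoint union is continuous iff its pullback stratification consists of clopen strata on which it is continuous. Consequently the proof should be short, essentially a direct citation of Proposition~\ref{nullcont} plus the clopen bookkeeping above.
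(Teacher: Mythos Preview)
Your proof is correct and follows essentially the same approach as the paper's. The only cosmetic difference is in the ``only if'' direction: the paper verifies relative closedness of $S_k$ via sequences (a convergent sequence in $S_k$ has limit in $S_k$ because $\Gr_k(\Vc)$, as a connected component, is closed), whereas you argue directly that $S_k=(N\vert_S)^{-1}(\Gr_k(\Vc))$ is the preimage of a clopen set under a continuous map; both amount to the same observation.
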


\begin{proof}
If  the set $S_k:=S\cap(\wedge^2\Vc)^*_k$ is relatively closed in~$S$  for every $k\in\{0,\dots,n\}$, then the disjoint union $S=S_0\sqcup S_1\sqcup\cdots\sqcup S_n$ is a partition of $S$ into relatively open subsets. 
Since the mapping $N\vert_{S_k}\colon S_k\to\Gr_k(\Vc)$ is continuous by Proposition~\ref{nullcont} for every $k\in\{0,\dots,m\}$, it then follows that the mapping $N\vert S\colon S\to\Gr(\Vc)$ is continuous. 

Conversely, let us assume that the mapping $N\vert_S\colon S\to\Gr(\Vc)$ is continuous. 
We  must prove that if  $k\in\{0,\dots,m\}$, $B\in S$, and $\{B^{(i)}\}_{i\in\NN}$ is any sequence in $S_k$ 
	with $\lim\limits_{i\to\infty}B^{(i)}=B$, then $B\in S_k$, 
	that is, $\dim N(B)=k$. 
Since  the mapping $N\vert_S\colon S\to\Gr(\Vc)$ is assumed to be continuous, we have $\lim\limits_{i\to\infty}N(B^{(i)})=N(B)$ in $\Gr(\Vc)$. 
	The linear subspaces of $\Vc$ of different dimensions belong to different connected components of $\Gr(\Vc)$ 
	and the connected components are closed subsets. 
	Since $N(B^{(i)})\in\Gr_k(\Vc)$, it then follows that $N(B)\in\Gr_k(\Vc)$, 
	that is, $B\in S_k$,  and this completes the proof. 
\end{proof}

\begin{remark}\label{Lopt1}
\normalfont
For the sake of completeness we recall that if $\Vc$ is any finite-di\-men\-sion\-al vector space over $\KK\in\{\RR,\CC\}$, 
then the mapping 
$\dim N\colon (\wedge^2\Vc)^*\to\NN$ is upper semicontinuous. 
That is, if 
  $\lim\limits_{i\to\infty}B^{(i)}=B$ in $(\wedge^2\Vc)^*$,  
 	then there exists $i_1\in\NN$ with $\dim N(B^{(i)})\le\dim N(B)$ for every $i\ge i_1$. 

In fact, let $\tau\colon\Vc\to\Lc(\Vc,\Vc^*)$, $\tau(B)v:=B(v,\cdot)$, 
	so that $N(B)=\Ker\tau(B)$, 
	as in the proof of Proposition~\ref{nullcont}. 
	Denoting $r:=\dim\Ran\tau(B)=\dim\Vc-\dim N(B)$, 
	and selecting some bases in $\Vc$ and $\Vc^*$, respectively, 
	it follows that a certain $r\times r$ minor of the matrix of $\tau(B)$ is different from zero. 
	Since $\lim\limits_{i\to\infty}B^{(i)}=B$, it follows that the corresponding minor of $\tau(B^{(i)})$ 
	is different from zero for every $i\ge i_1$, for a suitable  $i_1\in\NN$. 
	Then for every $i\ge i_1$ we have $r\le \dim\Ran\tau(B^{(i)})=\dim\Vc-\dim N(B^{(i)})$, 
	hence $\dim N(B^{(i)})\le \dim\Vc-r=\dim N(B)$, and we are done. 
\end{remark}

\section{Continuity of 
	Lagrangian subspaces} 
\label{Sect4}

In this section we construct a continuous selection of Lagrangian subspaces on suitable subsets of the set of presymplectic structures.

\begin{lemma}\label{addit}
	If $\Vc$ and $\Wc$ are finite-dimensional Hilbert spaces over $\KK\in\{\RR,\CC\}$, 
	then for all $A_1,\dots,A_m\in\Bc(\Vc,\Wc)$ we have $\Ran(A_1A_1^*+\cdots+A_mA_m^*)=\Ran A_1+\cdots+\Ran A_m$. 
\end{lemma}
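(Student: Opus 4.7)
The plan is to exploit self-adjointness of $T:=A_1A_1^*+\cdots+A_mA_m^*\in\Bc(\Wc)$, together with the finite-dimensional identities $\Ran T=(\Ker T)^\perp$ and $\Ran A_i=(\Ker A_i^*)^\perp$.

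First I would dispatch the easy inclusion $\Ran T\subseteq\Ran A_1+\cdots+\Ran A_m$ by simply writing
$$Tx=A_1(A_1^*x)+\cdots+A_m(A_m^*x)\in\Ran A_1+\cdots+\Ran A_m$$
for every $x\in\Wc$.

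For the reverse inclusion, I would note that $T=T^*$, so in finite dimensions $\Ran T=(\Ker T)^\perp$. Next I would identify $\Ker T$: since $T$ is positive, a vector $x\in\Wc$ lies in $\Ker T$ iff $\langle Tx,x\rangle=0$, and the expansion
$$\langle Tx,x\rangle=\sum_{i=1}^m\langle A_iA_i^*x,x\rangle=\sum_{i=1}^m\norm{A_i^*x}^2$$
gives $\Ker T=\bigcap_{i=1}^m\Ker A_i^*$.

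Finally I would use the standard orthogonal-complement rule
$$\Bigl(\bigcap_{i=1}^m\Ker A_i^*\Bigr)^{\perp}=\sum_{i=1}^m(\Ker A_i^*)^{\perp}=\sum_{i=1}^m\Ran A_i,$$
where the first equality is the duality between intersection and sum of subspaces in finite dimensions, and the second is the identity $(\Ker A_i^*)^\perp=\overline{\Ran A_i}=\Ran A_i$ (closure being superfluous in finite dimensions). Combining these equalities yields $\Ran T\supseteq\Ran A_1+\cdots+\Ran A_m$, which together with the easy inclusion gives the lemma. There is no real obstacle here; the only point requiring care is to make sure the self-adjointness of $T$ (rather than of the individual $A_iA_i^*$) is what delivers both the identification of $\Ker T$ via the quadratic form and the passage from $\Ker T$ to $\Ran T$ via the orthogonal complement.
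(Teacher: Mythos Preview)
Your proof is correct. The paper's argument is organized slightly differently: it introduces the block operator $A\colon\Vc^{\oplus m}\to\Wc$, $A(v_1,\dots,v_m)=A_1v_1+\cdots+A_mv_m$, observes that $\Ran A=\sum_i\Ran A_i$ and that $AA^*=\sum_i A_iA_i^*$, and then invokes the single-operator identity $\Ran A=\Ran AA^*$ (equivalently $\Ker A^*=\Ker AA^*$). Your computation of $\Ker T=\bigcap_i\Ker A_i^*$ via the quadratic form is exactly the unpacked version of the paper's $\Ker A^*=\Ker AA^*$, since $\Ker A^*=\bigcap_i\Ker A_i^*$. So the two proofs share the same core idea; the paper's block-operator device trades your explicit intersection/sum duality for a cleaner one-line reduction, while your version has the advantage of making every step visible without introducing an auxiliary space.
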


\begin{proof}
	 Defining $A\colon\Vc^{\oplus m}\to\Wc$, $A(v_1,\dots,v_m):=A_1v_1+\cdots+A_mv_m$, 
	we obtain $\Ran A=\Ran A_1+\cdots+\Ran A_m$. 
	
	On the other hand, $(\Ran A)^\perp=\Ker A^*=\Ker AA^*=(\Ran AA^*)^\perp$, 
	hence $\Ran A=\Ran AA^*$. 
	And finally, it is easily checked that $A^*\colon\Wc\to\Vc^{\oplus m}$ is given by $A^*w=(A_1^*w,\dots,A_m^*w)$ for all $w\in\Wc$, hence $AA^*=A_1A_1^*+\cdots+A_mA_m^*$, 
	and then the assertion follows directly. 
\end{proof} 

\begin{lemma}\label{rancont}
	Let $\Vc$ be any finite-dimensional real vector space 
	and for any integer $k\ge 1$ define 
	$\Bc_k(\Vc):=\{T\in\Bc(\Vc)\mid \rank T=k\}$. 
	Then the map  
	$\Bc_k(\Vc)\to\Gr_k(\Vc)$, $T\mapsto \Ran T$, 
	is continuous. 
\end{lemma}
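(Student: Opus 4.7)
My plan is to prove continuity pointwise via a local continuous frame for the range.

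Fix an arbitrary $T_0\in\Bc_k(\Vc)$. Since $\Ran T_0$ has dimension $k$, I can choose vectors $v_1,\dots,v_k\in\Vc$ such that $T_0v_1,\dots,T_0v_k$ form a basis of $\Ran T_0$. The first step is to argue that linear independence of $Tv_1,\dots,Tv_k$ persists under small perturbations of $T$: fixing a basis of $\Vc$, the non-vanishing of some $k\times k$ minor of the matrix with columns $Tv_1,\dots,Tv_k$ (expressed in that basis) is an open condition on $T$, so there is a neighborhood $U\subseteq\Bc(\Vc)$ of $T_0$ on which these $k$ vectors remain linearly independent. For $T\in U\cap\Bc_k(\Vc)$, the inclusion $\spa\{Tv_1,\dots,Tv_k\}\subseteq\Ran T$ is then an equality by dimension count, using $\rank T=k$.

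The second step is to deduce that $T\mapsto \spa\{Tv_1,\dots,Tv_k\}$ is continuous from $U\cap\Bc_k(\Vc)$ into $\Gr_k(\Vc)$. The map $T\mapsto (Tv_1,\dots,Tv_k)\in\Vc^k$ is obviously continuous, so this amounts to the continuity of the canonical map from the open subset of $\Vc^k$ consisting of linearly independent $k$-tuples into $\Gr_k(\Vc)$. This is standard: in terms of a local chart of $\Gr_k(\Vc)$ around $\Ran T_0$ (say, complement $\Ran T_0$ by a fixed subspace $\Wc\subseteq\Vc$ of codimension $k$, and parametrize nearby $k$-planes by $\Hom(\Ran T_0,\Wc)$), the chart coordinates are rational functions of the coordinates of the spanning vectors, with denominator a determinant that is nonzero precisely when the $k$-plane remains transverse to $\Wc$.

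Combining these two steps gives continuity of $T\mapsto\Ran T$ at $T_0$, and since $T_0\in\Bc_k(\Vc)$ was arbitrary, the statement follows. I do not expect any serious obstacle here; the only subtlety is that one must keep the domain of the auxiliary map restricted to $\Bc_k(\Vc)$, since without the rank $k$ hypothesis the $k$ candidate vectors need not span the full range. An alternative route would be to apply the spectral-projection argument of the proof of Lemma~\ref{cont} to the continuous map $T\mapsto TT^*$ (self-adjoint, with $\Ker(TT^*)=(\Ran T)^\perp$ and $\dim\Ker(TT^*)=\dim\Vc-k$ constant on $\Bc_k(\Vc)$), so that the Riesz projection onto $\Ker(TT^*)$ depends continuously on $T$, and hence so does its orthogonal complement $\Ran T$; but the frame argument above is more elementary and avoids bringing in an inner product.
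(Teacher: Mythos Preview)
Your proof is correct. The paper's own proof takes a different route: it fixes a scalar product on $\Vc$ and reduces to showing that $T\mapsto P_{\Ran T}$ (the orthogonal projection onto $\Ran T$) is continuous, which it derives from the Riesz/spectral-projection argument of Lemma~\ref{cont} --- essentially the ``alternative route'' you sketch at the end via $T\mapsto TT^*$. Your main argument via a local continuous frame is more elementary: it needs no inner product and no spectral theory, and it makes the role of the rank-$k$ hypothesis (ensuring the $k$ candidate vectors span all of $\Ran T$, not just a subspace) completely transparent. The paper's approach, on the other hand, is shorter to state once Lemma~\ref{cont} is available and fits the surrounding machinery of the paper, which repeatedly passes through orthogonal projections.
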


\begin{proof} After fixing a scalar product on $\Vc$, 
	it is enough to prove that the map 
	$\Bc_k(\Vc)\to \Bc_k(\Vc)$, $T\mapsto P_{\Ran T}$, 
	is continuous, and this follows from Lemma~\ref{cont}.  
	Here, for every linear subspace $\Wc\subseteq\Vc$ 
	we denote by $P_{\Wc}\in\Bc(\Vc)$ the orthogonal projection onto~$\Wc$. 
\end{proof}

\begin{proposition}\label{addcont}
	Let $\Vc$ be any finite-dimensional vector space over $\KK\in\{\RR,\CC\}$. 
	For any integers $k_1,\dots,k_m,k\ge 0$ define 
	$$\Gr_{k_1,\dots,k_m}^k(\Vc):=\{(\Vc_1,\dots,\Vc_m)\in \Gr_{k_1}(\Vc) \times \cdots\times \Gr_{k_m}(\Vc)\mid 
	\Vc_1+\cdots+\Vc_m\in \Gr_k(\Vc)\}$$
	regarded as a topological subspace of $\Gr(\Vc) \times \cdots\times \Gr(\Vc)$. 
	Then the map 
	$$\Gr_{k_1,\dots,k_m}^k(\Vc)\to \Gr_k(\Vc),\quad 
	(\Vc_1,\dots,\Vc_m)\mapsto \Vc_1+\cdots+\Vc_m$$
	is continuous. 
\end{proposition}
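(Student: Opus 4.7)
The plan is to reduce continuity of the sum map to the already established continuity of the range map in Lemma~\ref{rancont}, using Lemma~\ref{addit} to express the sum of subspaces as the range of a single operator that depends continuously on its arguments.

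First, I would fix a scalar product on $\Vc$ (chosen to be a Hermitian inner product in the case $\KK=\CC$), and for every linear subspace $\Wc\subseteq\Vc$ denote by $P_\Wc\in\Bc(\Vc)$ the orthogonal projection onto $\Wc$. The preliminary step is to observe that for every integer $j\ge 0$ the assignment $\Gr_j(\Vc)\to\Bc(\Vc)$, $\Wc\mapsto P_\Wc$, is continuous. This is essentially the content of the proof of Lemma~\ref{rancont}, since $P_\Wc$ is determined by $\Wc$ via $\Ker(\id-P_\Wc)=\Wc$ and $P_\Wc=P_\Wc^*$, and the argument of Lemma~\ref{cont} applies to the family of skew-symmetric operators $\ie(P_\Wc-\frac{1}{2}\id)$ in the complexified picture. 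Alternatively, in the standard affine chart of $\Gr_j(\Vc)$ near a point $\Wc_0$, subspaces are parametrized as graphs of linear maps $\Wc_0\to\Wc_0^\perp$, and the orthogonal projection depends on such a graph by a rational formula that is well-defined and continuous on the whole chart.

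Next, for each tuple $(\Vc_1,\dots,\Vc_m)\in\Gr_{k_1,\dots,k_m}^k(\Vc)$ I would form the self-adjoint operator
$$T(\Vc_1,\dots,\Vc_m):=P_{\Vc_1}+\cdots+P_{\Vc_m}\in\Bc(\Vc).$$
By the preliminary step, this operator depends continuously on $(\Vc_1,\dots,\Vc_m)$. Applying Lemma~\ref{addit} with $A_i:=P_{\Vc_i}$, so that $A_iA_i^*=P_{\Vc_i}^2=P_{\Vc_i}$, one obtains
$$\Ran T(\Vc_1,\dots,\Vc_m)=\Ran P_{\Vc_1}+\cdots+\Ran P_{\Vc_m}=\Vc_1+\cdots+\Vc_m.$$
The very definition of $\Gr_{k_1,\dots,k_m}^k(\Vc)$ forces the right-hand side to have dimension exactly $k$, hence $T$ takes values in the set $\Bc_k(\Vc)$ on which Lemma~\ref{rancont} applies.

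Finally, the desired map factors as
$$(\Vc_1,\dots,\Vc_m)\mapsto T(\Vc_1,\dots,\Vc_m)\mapsto \Ran T(\Vc_1,\dots,\Vc_m)=\Vc_1+\cdots+\Vc_m,$$
a composition of the continuous map into $\Bc_k(\Vc)$ constructed above with the continuous range map of Lemma~\ref{rancont}, and is therefore continuous. I do not anticipate a real obstacle here: the hypothesis of constant sum-dimension $k$ is exactly what is needed to keep $T$ inside the stratum $\Bc_k(\Vc)$ on which the range map is continuous, and without this hypothesis the sum-dimension could jump down in the limit and continuity would genuinely fail.
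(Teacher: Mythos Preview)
Your proposal is correct and follows essentially the same approach as the paper: fix an inner product, use Lemma~\ref{addit} to write $\Vc_1+\cdots+\Vc_m=\Ran(P_{\Vc_1}+\cdots+P_{\Vc_m})$, and then factor the sum map through the continuous assignment $\Wc\mapsto P_{\Wc}$ followed by addition and the range map of Lemma~\ref{rancont}. The paper records this as a three-fold composition (projections, then sum, then range) whereas you collapse the first two steps, but the argument is the same.
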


\begin{proof}
	We endow $\Vc$ with a structure of Hilbert space over $\KK\in\{\RR,\CC\}$. 
	It then follows by Lemma~\ref{addit} that 
	$$(\forall \Vc_1,\dots,\Vc_m\in\Gr(\Vc))\quad \Vc_1+\cdots+\Vc_m=\Ran(P_{\Vc_1}+\cdots+P_{\Vc_m}).$$
	This shows that the map referred to in the statement 
	is the composition of the maps 
	\begin{itemize} 
		\item $\Gr_{k_1,\dots,k_m}^k(\Vc)\to \Dc$, $(\Vc_1,\dots,\Vc_m)\mapsto (P_{\Vc_1},\dots,P_{\Vc_m})$, 
		\item $\Dc\to\Bc_k(\Vc)$, $(P_1,\dots,P_m)\mapsto P_1+\cdots+P_m$,
		\item $\Bc_k(\Vc)\to\Gr_k(\Vc)$, $T\mapsto\Ran T$.  
	\end{itemize}
	where $\Dc:=\{(P_1,\dots,P_m)\in\Bc(\Vc)^m\mid \rank(P_1+\cdots+P_m)=k\}$ with its topology inherited from $\Bc(\Vc)^m$.
	The first of the above three maps is continuous by one of the equivalent descriptions of the topology of $\Gr(\Vc)$, 
	the second map is clearly continuous, and the third map is continuous by Lemma~\ref{rancont}.
This completes the proof. 
\end{proof}

\begin{theorem}\label{Lcont}
	Let $\Vc$ be a vector space over $\KK\in\{\RR,\CC\}$ 
	with $m:=\dim_\KK\Vc<\infty$. 
	Fix a sequence of linear subspaces  
	$\{0\}=\Vc_0\subseteq\Vc_1\subseteq\cdots\subseteq\Vc_m=\Vc$ with $\dim\Vc_j=j$ for $j=0,\dots,m$,  
	and define $B_j:=B\vert_{\Vc_j\times\Vc_j}\in (\wedge^2\Vc_j)^*$ for $j=0,\dots,m$ and $B\in(\wedge^2\Vc)^*$. 
	Define the map 
	$$\pg\colon(\wedge^2\Vc)^*\to\Gr(\Vc),\quad 
	\pg(B):=N(B_1)+\cdots+N(B_m).$$
Set
\begin{equation}
\label{Lcont_eq1}
J_m:=\{\mathbf{k}=(k_1,\dots,k_m)\in\NN^m\mid 0\le k_j\le j\text{ for } j=0,\dots,m\}
\end{equation}	
	and, for every $\mathbf{k}=(k_1,\dots,k_m)\in J_m$, define
\begin{equation*}
(\wedge^2\Vc)^*_{\mathbf{k}}:=\{B\in(\wedge^2\Vc)^*\mid \dim N(B_j)=k_j\text{ for }j=1,\dots,m\}.
\end{equation*}
	Then $\pg(B)\in \Pg(B)$ for for every $B\in (\wedge^2\Vc)^*_{\mathbf{k}}$. 
	Moreover,  the mapping $\pg$ is continuous on every 
	 $S\subseteq(\wedge^2\Vc)^*$ such that the set $S\cap(\wedge^2\Vc)^*_{\mathbf{k}}$ is relatively closed in~$S$
	for every $\mathbf{k}\in J_m$.
\end{theorem}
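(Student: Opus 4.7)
The statement bundles two quite different claims: (i) the algebraic fact that $\pg(B)$ is always a polarization, which is the classical Vergne construction relative to the flag $\{\Vc_j\}$, and (ii) a continuity statement, which will follow by combining the null-space continuity of Theorem~\ref{nullcont-opt} with the addition continuity of Proposition~\ref{addcont}. My plan is to prove (i) by induction on~$j$, and then deduce (ii) by stratifying $S$ according to the partition $\{(\wedge^2\Vc)^*_{\mathbf{k}}\}_{\mathbf{k}\in J_m}$ of $(\wedge^2\Vc)^*$.

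For claim~(i), I set $\pg^{(j)}:=N(B_1)+\cdots+N(B_j)\subseteq\Vc_j$ and prove by induction that $\pg^{(j)}$ is a $B_j$-Lagrangian subspace of $\Vc_j$, with the case $j=m$ giving the result. The base case $j=1$ is trivial because every skew-symmetric form on the one-dimensional space $\Vc_1$ vanishes. In the inductive step, isotropy of $\pg^{(j+1)}=\pg^{(j)}+N(B_{j+1})$ with respect to $B_{j+1}$ is easy: $\pg^{(j)}\subseteq\Vc_j$ is $B_j$-isotropic and $B_{j+1}$ restricts to $B_j$ on~$\Vc_j$, while $N(B_{j+1})$ sits in the radical of $B_{j+1}$. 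The delicate point, and what I expect to be the main obstacle, is the dimension equality $\dim\pg^{(j+1)}=(j+1+\dim N(B_{j+1}))/2$. Writing $\Vc_{j+1}=\Vc_j\dotplus\KK v_{j+1}$ and using that every skew form has even rank, one checks that $\dim N(B_{j+1})-\dim N(B_j)\in\{+1,-1\}$; then the identity
\[
N(B_{j+1})\cap\Vc_j=N(B_j)\cap(\KK v_{j+1})^{\perp_B},
\]
combined with the inductive containment $N(B_j)\subseteq\pg^{(j)}$ (which holds because $\pg^{(j)}$ is already Lagrangian), allows one to compute $\dim(\pg^{(j)}\cap N(B_{j+1}))$ and verify the desired equality in both subcases via the Grassmann formula.

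For claim~(ii), I fix $\mathbf{k}\in J_m$ and work on $(\wedge^2\Vc)^*_{\mathbf{k}}$. Each map $B\mapsto B_j$ is linear, hence continuous, and takes values in $(\wedge^2\Vc_j)^*_{k_j}$, so Proposition~\ref{nullcont} makes $B\mapsto N(B_j)$ continuous into $\Gr_{k_j}(\Vc)$. By claim~(i), $\dim\pg(B)=(m+k_m)/2=:k$ is constant on this stratum, so the tuple $(N(B_1),\dots,N(B_m))$ lands in $\Gr^{k}_{k_1,\dots,k_m}(\Vc)$, and Proposition~\ref{addcont} yields continuity of $\pg$ on $(\wedge^2\Vc)^*_{\mathbf{k}}$.

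To conclude the global continuity on~$S$, I use that $J_m$ is finite and that, by hypothesis, $S=\bigsqcup_{\mathbf{k}\in J_m}(S\cap(\wedge^2\Vc)^*_{\mathbf{k}})$ is a partition of~$S$ into relatively closed subsets. Finiteness forces each piece to be relatively open as well (its complement is a finite union of closed pieces), so continuity of $\pg$ on each stratum upgrades to continuity on all of~$S$. The entire gluing step is purely formal; the real work is the inductive dimension count in (i).
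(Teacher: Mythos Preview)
Your argument is correct and, for the continuity claim, identical to the paper's: stratify $S$ by the finite partition $\{S\cap(\wedge^2\Vc)^*_{\mathbf{k}}\}_{\mathbf{k}\in J_m}$ into relatively closed (hence relatively open) pieces, and on each stratum compose the continuous maps $B\mapsto N(B_j)$ from Proposition~\ref{nullcont} with the addition map of Proposition~\ref{addcont}, using that $\dim\pg(B)=(m+k_m)/2$ is constant there. The only difference is in claim~(i): the paper simply cites \cite[Lemma~1.12.3(i)]{Dix74}, whereas you supply a direct inductive proof of that lemma---your dimension count via the dichotomy $\dim N(B_{j+1})-\dim N(B_j)\in\{\pm1\}$ and the identity $N(B_{j+1})\cap\Vc_j=N(B_j)\cap(\KK v_{j+1})^{\perp_B}$ is the standard Vergne argument and is correct.
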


\begin{proof}
The fact that $\pg(B)\in \Pg (B)$ follows from \cite[Lemma 1.12.3(i)]{Dix74}.

The disjoint union 
	$$S=\bigcup_{\mathbf{k}\in J_m}S\cap(\wedge^2\Vc)^*_{\mathbf{k}}$$
	is a finite partition of $S$ into relatively closed subsets, hence these subsets are also relatively open in~$S$. 
	Therefore it suffices to prove that the mapping $\pg$ is continuous on each of these relatively open subsets. 
	To this end we fix $\mathbf{k}=(k_1,\dots,k_m)\in J_m$ and we prove that the mapping $\pg\vert_{(\wedge^2\Vc)^*_{\mathbf{k}}}\colon (\wedge^2\Vc)^*_{\mathbf{k}}\to\Gr(\Vc)$ is continuous. 
	
	For $j=1,\dots,m$, the map 
	$$\gamma_j\colon(\wedge^2\Vc)^*_{\mathbf{k}}\to\Gr_{k_j}(\Vc_j)\hookrightarrow\Gr_{k_j}(\Vc),\quad 
	B\mapsto N(B_j)$$
	is continuous by Proposition~\ref{nullcont} along with the fact that the restriction mapping 
	$$(\wedge^2\Vc)^*\to(\wedge^2\Vc_j)^*,\quad B\mapsto B_j$$
	is continuous.  
Then note that for $B\in (\wedge^2\Vc)^*_{\mathbf{k}}$, $\dim N(B) = \dim N_m(B_m)=k_m$.
	Since $\pg (B) \in \Pg(B)$, $\dim \pg (B) = (m+k_m)/2=: k$, hence 
	$(\gamma_1(B), \dots, \gamma_m(B))\in \Gr_{k_1,\dots,k_m}^k(\Vc)$.

	Summing up, the map $\pg\vert_{(\wedge^2\Vc)^*_{k_1,\dots,k_m}}$ is the composition of the following continuous maps
	\begin{itemize}
		\item $(\wedge^2\Vc)^*_{\mathbf{k}}\to \Gr_{k_1,\dots,k_m}^k(\Vc),\quad 
		B\mapsto (\gamma_1(B),\dots,\gamma_m(B))$, 
		\item $\Gr_{k_1,\dots,k_m}^k(\Vc)\to \Gr_k(\Vc)$, $(\Vc_1,\dots,\Vc_m)\mapsto\Vc_1+\cdots+\Vc_m$, 
	\end{itemize}
	where the second of the above maps is continuous by Proposition~\ref{addcont}. 
	This completes the proof. 
\end{proof}

\begin{remark}
	\normalfont 
	In Theorem~\ref{Lcont}, 
	for every $B\in(\wedge^2\Vc)^*$, the linear subspace $\pg(B)$ is a Lagrangian subspace for the presymplectic structure $B$ 
	by \cite[Lemma 1.12.3]{Dix74}, that is, 	we have $\pg(B)\in \Pg(B)$. 
	Thus the map $\pg\colon(\wedge^2\Vc)^*\to\Gr(\Vc)$ is a selection of 
	the upper semicontinuous set-valued map 
	$\Pg\colon(\wedge^2\Vc)^*\to 2^{\Gr(\Vc)}$ (see Proposition~\ref{Luppsemi} for $F=\Gr(\Vc)$), 
	and that selection is continuous on every subset $(\wedge^2\Vc)^*_{\mathbf{k}}\subseteq(\wedge^2\Vc)^*$ 
	for arbitrary $\mathbf{k}\in J_m$. 
\end{remark}

\section{Continuous selections and Schubert cells in Grassmann manifolds}
\label{Sect5}

In this section, the maximal continuity domains from Theorem~\ref{Lcont} are described in terms of Schubert cells in Grassmann manifolds. 
To this end we generalize \cite[Lemma 3.2]{Cu88} and \cite[Lemma 1.1]{Cu92}, 
using the relation between jump indices and Schubert cells established in \cite{BB17}.

Throughout this section we keep the notation in Theorem~\ref{Lcont}. 
Namely, $\Vc$ is a vector space over $\KK\in\{\RR,\CC\}$ 
with $m:=\dim_\KK\Vc<\infty$ and $B\in(\wedge^2\Vc)^*\setminus \{0\}$ is 
a presymplectic structure.  
We fix a sequence of linear subspaces  
$$\{0\}=\Vc_0\subsetneqq\Vc_1\subsetneqq\cdots\subsetneqq\Vc_m=\Vc$$
with $\dim_\KK\Vc_j=j$ for $j=0,\dots,m$,  
set $B_j:=B\vert_{\Vc_j\times\Vc_j}\in (\wedge^2\Vc_j)^*$ for $j=0,\dots,m$, 
 and define $\pg(B)\in\Pg(B)$ by
\begin{equation}\label{pgB}
\pg(B):=N(B_1)+\cdots+N(B_m)\in\Gr(\Vc).
\end{equation}
For every $\Wc\in\Gr(\Vc)$ we define 
$$\jump\Wc:=
\{j\in\{1,\dots,m\}\mid \Vc_j\not\subset\Vc_{j-1}+\Wc\}.$$
Then for every subset $e\subseteq\{1,\dots,m\}$ its corresponding \emph{Schubert cell} is 
$$\Gr_e(\Vc):=\{\Wc\in\Gr(\Vc)\mid\jump\Wc=e\}$$
and we note that for every integer $k\in\{1,\dots,m\}$ we have the following finite partition of the set of $k$-dimensional linear subspaces of $\Vc$: 
$$\Gr_k(\Vc)=\bigsqcup_{\card e=m-k}\Gr_e(\Vc).$$
See \cite[Sect. 3]{BB17} for more details. 

Thus, the aim of this section in to prove the following
characterization of the continuity domains of the mapping $\pg\colon(\wedge^2\Vc)^*\to\Gr(\Vc)$, $\pg(B):=N(B_1)+\cdots+N(B_m)$, 
from Theorem~\ref{Lcont} 
in terms of Schubert cells in the Grassmann manifold~$\Gr(\Vc)$.  
\begin{theorem}\label{Sopt3}
	If $S\subseteq(\wedge^2\Vc)^*$, and  
	for every $e\subseteq\{1,\dots,m\}$ the set $S\cap\pg^{-1}(\Gr_e(\Vc))$ is relatively closed in~$S$, 
	then the mapping 
	$\pg\vert_S\colon S\to\Gr(\Vc)$ is continuous. 
\end{theorem}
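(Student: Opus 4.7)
My plan is to reduce Theorem~\ref{Sopt3} to Theorem~\ref{Lcont} by showing that the Schubert-cell partition of $\Gr(\Vc)$, pulled back along $\pg$, is a refinement of the dimension-sequence partition $\{(\wedge^2\Vc)^*_{\mathbf{k}}\}_{\mathbf{k}\in J_m}$ used there. The key claim to establish is that the Schubert cell $e=\jump\pg(B)$ determines the whole sequence $\mathbf{k}(B):=(\dim N(B_j))_{j=1}^m$, so that the Schubert-closedness hypothesis here automatically upgrades to the $\mathbf{k}$-closedness hypothesis in Theorem~\ref{Lcont}.

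First I would establish the auxiliary identity
\[
\pg(B)\cap\Vc_j = N(B_1)+\cdots+N(B_j) = \pg(B_j)\qquad(j=0,\dots,m).
\]
The inclusion $\supseteq$ is clear since $N(B_i)\subseteq\Vc_i\subseteq\Vc_j$ for $i\le j$. For the other direction, given $v\in\pg(B)\cap\Vc_j$ written as $v=v_1+\cdots+v_m$ with $v_i\in N(B_i)$, let $k$ be the largest index with $v_k\ne 0$; if $k>j$, then $v_k=v-(v_1+\cdots+v_{k-1})\in\Vc_{k-1}$, so $v_k\in N(B_k)\cap\Vc_{k-1}\subseteq N(B_{k-1})$ (the last containment being direct from the definition of $N$), and one may replace $v_{k-1}$ by $v_{k-1}+v_k$ and iterate to shorten the decomposition until the index drops to~$j$.

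Next I would observe that, since $\pg(B_j)$ is a Lagrangian subspace for~$B_j$ by \cite[Lemma~1.12.3]{Dix74},
\[
\dim(\pg(B)\cap\Vc_j) = \dim \pg(B_j) = \frac{j+\dim N(B_j)}{2}.
\]
On the other hand, by unwinding the definition of $\jump$ and using $\dim(\Vc_j\cap\pg(B))=\dim\Vc_j+\dim\pg(B)-\dim(\Vc_j+\pg(B))$ together with the fact that $\dim(\Vc_j+\pg(B))-\dim(\Vc_{j-1}+\pg(B))$ equals $1$ exactly when $j\in e$, one obtains (as in \cite[Sect.~3]{BB17})
\[
\dim(\pg(B)\cap\Vc_j) = \card\{i\in\{1,\dots,j\}:i\notin e\}.
\]
Combining these two expressions yields
\[
\dim N(B_j) = j - 2\,\card\{i\in\{1,\dots,j\}:i\in e\},
\]
so $\mathbf{k}(B)$ is a function $\mathbf{k}(e)$ of $e$ alone, and therefore
\[
(\wedge^2\Vc)^*_{\mathbf{k}} = \bigsqcup_{e\,:\,\mathbf{k}(e)=\mathbf{k}} \pg^{-1}(\Gr_e(\Vc)).
\]

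To conclude, for each $\mathbf{k}\in J_m$ the set $S\cap(\wedge^2\Vc)^*_{\mathbf{k}}$ is a finite disjoint union of sets $S\cap\pg^{-1}(\Gr_e(\Vc))$, each relatively closed in~$S$ by hypothesis; hence $S\cap(\wedge^2\Vc)^*_{\mathbf{k}}$ is relatively closed in~$S$, and Theorem~\ref{Lcont} delivers continuity of $\pg\vert_S$. The main obstacle is the intersection identity $\pg(B)\cap\Vc_j=N(B_1)+\cdots+N(B_j)$: although its proof is a short induction, it is what really ties the Vergne construction to the filtration, and without it one has no direct route from the Schubert data of $\pg(B)$ back to the individual dimensions $\dim N(B_j)$ that control Theorem~\ref{Lcont}.
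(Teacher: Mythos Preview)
Your proof is correct and lands on the same reduction as the paper: both show that the Schubert cell $e=\jump\pg(B)$ already determines the whole dimension vector $\mathbf{k}(B)=(\dim N(B_j))_{j}$, so that the hypothesis of Theorem~\ref{Sopt3} implies that of Theorem~\ref{Lcont}. The paper packages this implication as Lemma~\ref{L5}, whose proof is essentially your steps~2--4: it cites \cite[Lemma~1.12.3]{Dix74} for the fact that $\pg(B)\cap\Vc_j$ is Lagrangian for $B_j$, and \cite[Prop.~3.4]{BB17} for the Schubert dimension count $\dim(\pg(B)\cap\Vc_j)=\card\{i\le j:i\notin e\}$, arriving at exactly your formula $\dim N(B_j)=2\,\card\{i\le j:i\notin e\}-j$.

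Where the two routes differ is in the scaffolding. The paper devotes most of Section~\ref{Sect5} to a Currey-style inductive construction (Definitions~\ref{D1}--\ref{D2}, Lemmas~\ref{R1} and \ref{L1}--\ref{L4}) that builds a descending chain $\pg^0(B)\supsetneqq\cdots\supsetneqq\pg^d(B)=\pg(B)$ and identifies $\jump N(B)\setminus\jump\pg(B)$ and $\jump\pg(B)$ via explicit index sequences $i_k,j_k$; formally Lemma~\ref{L5} invokes Lemma~\ref{L4} only to justify $\card(\{1,\dots,m\}\setminus\jump\pg(B))=m-d$, which is in fact an immediate consequence of $\pg(B)\in\Pg(B)$ and the general identity $\card(\jump\Wc)=\dim(\Vc/\Wc)$. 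You bypass all of this by proving the intersection identity $\pg(B)\cap\Vc_j=N(B_1)+\cdots+N(B_j)$ directly with a short downward induction (using $N(B_k)\cap\Vc_{k-1}\subseteq N(B_{k-1})$), which simultaneously recovers the Lagrangian fact the paper cites from Dixmier. What the paper's longer route buys is an independent structural description of the jump indices, generalizing \cite{Cu88,Cu92}; what your route buys is a self-contained argument that isolates precisely the one identity on which Theorem~\ref{Sopt3} actually rests.
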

The remaining part of this section is devoted to the proof of Theorem~\ref{Sopt3}. 

\begin{definition}\label{D1}
\normalfont
We set $\pg^0(B):=\Vc$. 
Inductively, assume $k\ge 0$ is an integer and we have already defined the linear subspaces $\pg^0(B)\supseteq\cdots\supseteq\pg^k(B)$ of~$\Vc$. 
If the condition $\pg^k(B)\not\perp_B\pg^k(B)$ is satisfied, then we define 
\begin{align}
\label{D1_eq1}
i_{k+1}:=
& \min\{i\in\{0,\dots,m\}\mid \Vc_i\cap\pg^k(B)\not\perp_B\pg^k(B)\},\\ 
\label{D1_eq2}
\pg^{k+1}(B):=
&(\Vc_{i_{k+1}}\cap\pg^k(B))^{\perp_B}\cap\pg^k(B)
\end{align}
Moreover, we define 
\begin{equation}
\label{D1_eq3}
j_{k+1}:=\min\{j\in\{0,\dots,m\}\mid \Vc_j\cap\pg^k(B)\not\subset\pg^{k+1}(B)\}.
\end{equation}
\end{definition}

In the following lemma we collect some features of the above inductive construction, 
in particular showing that it eventually stops. 

\begin{lemma}\label{R1}
With the above notation, one has: 
\begin{enumerate}[{\rm(i)}]
	\item\label{R1_item1} 
	$\pg^k(B)\supsetneqq\pg^{k+1}(B)$ and $\dim(\pg^k(B)/\pg^{k+1}(B))=1$;
	\item\label{R1_item2} 
	 $\pg^k(B)=\pg^{k+1}(B)\dotplus(\Vc_{j_{k+1}}\cap\pg^k(B))$; 
	\item\label{R1_item3}  
	$\Vc_{i_{k+1}}\cap\pg^k(B)\subseteq\pg^{k+1}(B)$;
	\item\label{R1_item4}  
	$\Vc_{i_{k+1}}\cap\pg^k(B)\perp_B\pg^{k+1}(B)$; 
	\item\label{R1_item5}
	$\pg^k(B)^{\perp_B}\cap\pg^k(B)\subseteq \pg^{k+1}(B)^{\perp_B}\cap \pg^{k+1}(B)$.
	\end{enumerate}
\end{lemma}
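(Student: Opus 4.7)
The plan is to produce the proof by exploiting the minimality defining $i_{k+1}$ in order to pick a convenient generator of the rank-one quotient $(\Vc_{i_{k+1}}\cap\pg^k(B))/(\Vc_{i_{k+1}-1}\cap\pg^k(B))$, and then to use this generator to extract a single linear functional on $\pg^k(B)$ whose kernel is $\pg^{k+1}(B)$. All five items fall out of this setup.

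First I would observe that the minimality of $i_{k+1}$ in \eqref{D1_eq1} forces $\Vc_i\cap\pg^k(B)\perp_B\pg^k(B)$ for every $i<i_{k+1}$, while the failure of orthogonality at $i=i_{k+1}$ gives the strict inclusion $\Vc_{i_{k+1}-1}\cap\pg^k(B)\subsetneqq\Vc_{i_{k+1}}\cap\pg^k(B)$. Since $\dim_\KK(\Vc_{i_{k+1}}/\Vc_{i_{k+1}-1})=1$, I can then pick a vector $v_{k+1}\in(\Vc_{i_{k+1}}\cap\pg^k(B))\setminus\Vc_{i_{k+1}-1}$ generating that rank-one quotient. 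The choice of $v_{k+1}$, combined with the $B$-orthogonality of $\Vc_{i_{k+1}-1}\cap\pg^k(B)$ to all of $\pg^k(B)$, forces $B(v_{k+1},\cdot)\vert_{\pg^k(B)}\not\equiv 0$.

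With $v_{k+1}$ in hand I would establish \eqref{R1_item3} and \eqref{R1_item4} right away: since $B(v_{k+1},v_{k+1})=0$ by skew-symmetry and $\Vc_{i_{k+1}-1}\cap\pg^k(B)$ is already $B$-orthogonal to $\pg^k(B)$, the whole of $\Vc_{i_{k+1}}\cap\pg^k(B)$ is $B$-isotropic, hence contained in $(\Vc_{i_{k+1}}\cap\pg^k(B))^{\perp_B}\cap\pg^k(B)=\pg^{k+1}(B)$; assertion \eqref{R1_item4} is then immediate from \eqref{D1_eq2}. For \eqref{R1_item1} I would identify $\pg^{k+1}(B)$ with the kernel of the linear functional $f\colon\pg^k(B)\to\KK$, $f(w):=B(v_{k+1},w)$: the isotropy of $\Vc_{i_{k+1}-1}\cap\pg^k(B)$ against $\pg^k(B)$ lets me discard the remaining generators of $\Vc_{i_{k+1}}\cap\pg^k(B)$ in the defining condition \eqref{D1_eq2}, and the nonvanishing of $f$ yields $\dim_\KK(\pg^k(B)/\pg^{k+1}(B))=1$. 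For \eqref{R1_item5}, the inclusion $\pg^{k+1}(B)\subseteq\pg^k(B)$ gives $\pg^k(B)^{\perp_B}\subseteq\pg^{k+1}(B)^{\perp_B}$, while $\Vc_{i_{k+1}}\cap\pg^k(B)\subseteq\pg^k(B)$ combined with \eqref{D1_eq2} pulls every element of $\pg^k(B)^{\perp_B}\cap\pg^k(B)$ into $\pg^{k+1}(B)$.

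Assertion \eqref{R1_item2} I would handle last by analyzing the second minimality \eqref{D1_eq3}: for every $j<j_{k+1}$ we have $\Vc_j\cap\pg^k(B)\subseteq\pg^{k+1}(B)$, while $\Vc_{j_{k+1}}\cap\pg^k(B)\not\subseteq\pg^{k+1}(B)$ provides a representative of the unique non-trivial class of the quotient $\pg^k(B)/\pg^{k+1}(B)$; combined with the codimension-one statement from \eqref{R1_item1} this yields the decomposition in \eqref{R1_item2}. I expect the main obstacle to lie not in any single calculation but in keeping the two distinct minimality conditions \eqref{D1_eq1} and \eqref{D1_eq3} straight: $i_{k+1}$ locates where non-isotropy first appears in the filtration, while $j_{k+1}$ locates where the filtration first exits $\pg^{k+1}(B)$, and the critical technical point is checking that the functional $B(v_{k+1},\cdot)$ cuts $\pg^k(B)$ down precisely to $\pg^{k+1}(B)$, not to a larger subspace.
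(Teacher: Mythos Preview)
Your proposal is correct and follows essentially the same route as the paper's proof: both pick a generator (the paper calls it $x_0$, you call it $v_{k+1}$) of the rank-one quotient $(\Vc_{i_{k+1}}\cap\pg^k(B))/(\Vc_{i_{k+1}-1}\cap\pg^k(B))$, identify $\pg^{k+1}(B)$ with the kernel of the nonzero linear functional $B(x_0,\cdot)\vert_{\pg^k(B)}$, and read off all five items from this description. The only differences are cosmetic---the order in which the items are handled and the name of the distinguished vector.
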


\begin{proof}
	\eqref{R1_item1}
We have $\pg^{k+1}(B)\subseteq\pg^{k}(B)$ by the definition of $\pg^{k+1}(B)$ in \eqref{D1_eq2}. 
Moreover, by the definition of $i_{k+1}$ in \eqref{D1_eq1}, we have
\begin{equation}
\label{R1_proof_eq1}
(\exists x_0\in\Vc_{i_{k+1}}\cap\pg^k(B)) \quad x_0\not\perp_B\pg^k(B)
\end{equation}
and 
\begin{equation}
\label{R1_proof_eq2}
\Vc_{i_{k+1}-1}\cap\pg^k(B)\perp_B\pg^k(B). 
\end{equation}
We now make the general remark:
If $ \Wc_0,\Wc_1,\Wc_2\in\Gr(\Vc)$ and $ \Wc_1\subseteq\Wc_2 $ then
$$
\dim((\Wc_2\cap\Wc_0)/(\Wc_1\cap\Wc_0))\le\dim(\Wc_2/\Wc_1), $$
which follows from the fact that the mapping 
$(\Wc_2\cap\Wc_0)/(\Wc_1\cap\Wc_0)\to\Wc_2/\Wc_1$, $x+(\Wc_1\cap\Wc_0)\to x+\Wc_1$, is well defined, linear and injective. 
This implies by \eqref{R1_proof_eq1}--\eqref{R1_proof_eq2} that 
\begin{equation}
\label{R1_proof_eq3}
\Vc_{i_{k+1}}\cap\pg^k(B)=\RR x_0\dotplus (\Vc_{i_{k+1}-1}\cap\pg^k(B))
\end{equation}
hence 
$$(\Vc_{i_{k+1}}\cap\pg^k(B))^{\perp_B}\cap\pg^k(B)
=\{x_0\}^{\perp_B}\cap \pg^k(B).$$
This is further equivalent to 
$$\pg^{k+1}(B)=\Ker\psi_{x_0}$$
where the linear functional $\psi_{x_0}\colon \pg^k(B)\to\KK$, $\psi_{x_0}(v):=B(x_0,v)$, satisfies $\psi_{x_0}\ne0$ by \eqref{R1_proof_eq1} 
hence $\dim(\pg^k(B)/\Ker\psi_{x_0})=1$, and thus $\dim(\pg^k(B)/\pg^{k+1}(B))=1$. 
	
\eqref{R1_item2}
We have just seen that $\dim(\pg^k(B)/\pg^{k+1}(B))=1$, 
while 
the definition of $j_{k+1}$ in \eqref{D1_eq3} implies 
$\Vc_{j_{k+1}}\cap\pg^k(B)\not\subset\pg^{k+1}(B)$, hence 
$\pg^k(B)=\pg^{k+1}(B)\dotplus(\Vc_{j_{k+1}}\cap\pg^k(B))$. 
	
\eqref{R1_item3}
By \eqref{R1_proof_eq1}--\eqref{R1_proof_eq3} along with $B(x_0,x_0)=0$ we obtain that $\Vc_{i_{k+1}}\cap\pg^k(B)$ is an isotropic subspace with respect to $B$, that is, 
$\Vc_{i_{k+1}}\cap\pg^k(B)\perp_B \Vc_{i_{k+1}}\cap\pg^k(B)$. 
Then, by the definition of $\pg^{k+1}(B)$ in \eqref{D1_eq2}, 
we obtain the assertion. 
	
\eqref{R1_item4}
The definition of $\pg^{k+1}(B)$ in \eqref{D1_eq2} implies 
$\pg^{k+1}(B)\subseteq (\Vc_{i_{k+1}}\cap\pg^k(B))^{\perp_B}$. 

\eqref{R1_item5}
We have $\pg^k(B)^{\perp_B}\cap\pg^k(B)\subseteq
(\Vc_{i_{k+1}}\cap \pg^k(B))^{\perp_B}\cap\pg^k(B)=\pg^{k+1}(B)$ 
and on the other hand $\pg^{k+1}(B)\subseteq\pg^k(B)$, which implies 
$\pg^k(B)^{\perp_B}\subseteq \pg^{k+1}(B)^{\perp_B}$. 
Thus we obtain the inclusion in the statement.
\end{proof}

\begin{definition}
\label{D2}
\normalfont 
We denote by $d\in\{1,\dots,m\}$ the integer that is uniquely determined by the properties
\begin{align*}
\pg^{d-1}(B)& \not\perp_B\pg^{d-1}(B), \\
\pg^d(B)&\perp_B\pg^d(B).
\end{align*} 
The existence of $d$ follows by Lemma~\ref{R1}\eqref{R1_item1}. 
\end{definition}

\begin{lemma}
\label{L1}
We have  $d=\dim(\Vc/\pg(B))=\frac{1}{2}\dim(\Vc/N(B))$ 
and  $\pg^d(B)=\pg(B)\in\Pg(B)$. 
\end{lemma}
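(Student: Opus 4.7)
The plan is to prove $\pg^d(B)=\pg(B)$ by sandwich: containment one way, and then maximality of the Lagrangian on the other side. The dimension claims will follow automatically.

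First I would handle the dimension count. By iterating Lemma~\ref{R1}\eqref{R1_item1}, we have $\dim(\pg^k(B)/\pg^{k+1}(B))=1$ for $k=0,1,\dots,d-1$, so $\dim\pg^d(B)=m-d$ and hence $\dim(\Vc/\pg^d(B))=d$. This will yield the formula for $d$ once $\pg^d(B)$ is identified with~$\pg(B)$.

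The heart of the proof is to establish, by induction on $k\in\{0,\dots,d\}$, that
\begin{equation*}
	N(B_j)\subseteq \pg^k(B)\quad\text{for every } j\in\{1,\dots,m\}.
\end{equation*}
The case $k=0$ is trivial. For the inductive step, I would fix $v\in N(B_j)$ together with an arbitrary $w\in \Vc_{i_{k+1}}\cap\pg^k(B)$, and show $B(v,w)=0$; combined with $v\in\pg^k(B)$, this gives $v\in\pg^{k+1}(B)$ by definition~\eqref{D1_eq2}. Two cases arise. If $i_{k+1}\le j$, then $w\in\Vc_{i_{k+1}}\subseteq\Vc_j$, so $B(v,w)=B_j(v,w)=0$ since $v\in N(B_j)$. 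If $i_{k+1}>j$, then $v\in\Vc_j\subseteq\Vc_{i_{k+1}-1}$ and, by the inductive hypothesis, $v\in\pg^k(B)$, so $v\in\Vc_{i_{k+1}-1}\cap\pg^k(B)$; here the minimality of $i_{k+1}$ in~\eqref{D1_eq1} forces $\Vc_{i_{k+1}-1}\cap\pg^k(B)\perp_B\pg^k(B)$, and since $w\in\pg^k(B)$ we again obtain $B(v,w)=0$. This exploitation of minimality in the second case is the main obstacle, and the subtle point that the inductive strategy is specifically designed to overcome.

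Summing over $j$ yields $\pg(B)=\sum_j N(B_j)\subseteq \pg^d(B)$. Since $\pg(B)\in\Pg(B)$ by \cite[Lemma~1.12.3(i)]{Dix74} (as already recorded in the proof of Theorem~\ref{Lcont}), it is a \emph{maximal} isotropic subspace; but $\pg^d(B)$ is itself isotropic by Definition~\ref{D2}, so the containment $\pg(B)\subseteq\pg^d(B)$ must be an equality, giving $\pg^d(B)=\pg(B)\in\Pg(B)$. Finally, since $\pg(B)/N(B)$ is Lagrangian in the symplectic quotient $\Vc/N(B)$, its dimension equals $\tfrac12\dim(\Vc/N(B))$, so
\begin{equation*}
	d=\dim(\Vc/\pg^d(B))=\dim(\Vc/\pg(B))=\tfrac12\dim(\Vc/N(B)),
\end{equation*}
completing the proof.
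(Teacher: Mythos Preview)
Your argument is correct and follows essentially the same approach as the paper: both establish $N(B_j)\subseteq\pg^{k+1}(B)$ from $N(B_j)\subseteq\pg^k(B)$ via the same case split on whether $j\ge i_{k+1}$ or $j<i_{k+1}$, using the defining property of $N(B_j)$ in the first case and the minimality of $i_{k+1}$ in the second, and then both conclude equality from the maximality of $\pg(B)$ among isotropic subspaces. The only cosmetic difference is that the paper phrases the inductive step as the subspace inclusion $N(B_i)\cap\pg^k(B)\subseteq\pg^{k+1}(B)$ and chains inclusions of subspaces, whereas you work elementwise with a pair $v,w$; the content is identical.
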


\begin{proof}
We have 
 $\pg(B)\in\Pg(B)$, thus $d=\dim(\Vc/\pg(B))=\frac{1}{2}\dim(\Vc/N(B))$, 
as noted after \eqref{pgB}. 

It remains to prove the equality $\pg^d(B)=\pg(B)$. 

``$\supseteq$'' 
We first prove the following inclusion for $i=1,\dots,m$: 
\begin{equation}
\label{L1_proof_eq1}
N(B_i)\cap\pg^k(B)\subseteq\pg^{k+1}(B) \text{ if }0\le k<d.
\end{equation}
To prove the above inclusion we separately discuss the two cases that can occur: 

Case 1: $i<i_{k+1}$. 
Then $\Vc_i\cap\pg^k(B)\perp_B\pg^k(B)$ by the definition of $i_{k+1}$ in \eqref{D1_eq1}, hence $N(B_i)\cap\pg^k(B)\perp_B\pg^k(B)$, 
and then 
$$N(B_i)\cap\pg^k(B) \subseteq\pg^k(B)^{\perp_B}\cap\pg^k(B)\subseteq\pg^{k+1}(B)$$  
where the last inclusion follows by the definition of $\pg^{k+1}(B)$ in \eqref{D1_eq2}. 

Case 2: $i_{k+1}\le i$. 
One then has 
\begin{align*}
N(B_i)\cap\pg^k(B)
& \subseteq \Vc_i^{\perp_B}\cap\pg^k(B) 
\subseteq  (\Vc_i\cap\pg^k(B))^{\perp_B}
\cap\pg^k(B)  \\
& \subseteq  (\Vc_{i_{k+1}}\cap\pg^k(B))^{\perp_B}\cap\pg^k(B)
=\pg^{k+1}(B). 
\end{align*}
A repeated application of \eqref{L1_proof_eq1} gives
\begin{align*} 
& N(B_i)\subseteq\Vc=\pg^0(B),\\
& N(B_i)=N(B_i)\cap\pg^0(B)\subseteq N(B_i)\cap\pg^1(B)\subseteq\cdots\subseteq N(B_i)\cap\pg^d(B)\subseteq \pg^d(B).
\end{align*}
Since $\pg(B)=N(B_1)+\cdots+N(B_m)$, we thus obtain $\pg^d(B)\supseteq\pg(B)$. 

``$\subseteq$'' 
By Definition~\ref{D2} we have $\pg^d(B)\perp_B\pg^d(B)$, 
hence $\pg^d(B)\in\Sbd(B)$. 
On the other hand, $\pg(B)\in\Pg(B)$, as noted at the beginning of the present proof, 
hence $\dim\pg^d(B)\le\dim\pg(B)$. 
Thus, since we already proved the inclusion $\pg^d(B)\supseteq\pg(B)$, 
we obtain $\pg^d(B)=\pg(B)$, and this completes the proof. 
\end{proof}

\begin{lemma}
	\label{L2}
	We have $i_k,j_k\in\jump N(B)$. Moreover $i_k<j_k$ and $i_k<i_{k+1}$. 
\end{lemma}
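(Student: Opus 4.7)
The plan is to establish the four assertions in turn, all of which will follow from one crucial preliminary observation together with the structural facts already recorded in Lemma~\ref{R1}. That observation is $N(B)\subseteq\pg^k(B)$ for every $k\in\{0,1,\dots,d\}$: indeed, by Lemma~\ref{L1} we have $\pg^d(B)=\pg(B)=N(B_1)+\cdots+N(B_m)\supseteq N(B_m)=N(B)$, and since $\pg^0(B)\supseteq\pg^1(B)\supseteq\cdots\supseteq\pg^d(B)$ by Lemma~\ref{R1}\eqref{R1_item1}, the inclusion propagates to every relevant~$k$.

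To prove $i_{k+1}\in\jump N(B)$, I would argue by contradiction. Assume $\Vc_{i_{k+1}}\subseteq\Vc_{i_{k+1}-1}+N(B)$. By~\eqref{D1_eq1} there exists $x_0\in\Vc_{i_{k+1}}\cap\pg^k(B)$ with $x_0\not\perp_B\pg^k(B)$; decompose $x_0=y+z$ with $y\in\Vc_{i_{k+1}-1}$ and $z\in N(B)\subseteq\pg^k(B)$. Then $y=x_0-z\in\pg^k(B)$, so $y\in\Vc_{i_{k+1}-1}\cap\pg^k(B)$, and the minimality of $i_{k+1}$ forces $y\perp_B\pg^k(B)$. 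Combined with $z\in N(B)\subseteq\Vc^{\perp_B}$, this gives $x_0\perp_B\pg^k(B)$, a contradiction. The argument for $j_{k+1}\in\jump N(B)$ will be strictly parallel: assuming $\Vc_{j_{k+1}}\subseteq\Vc_{j_{k+1}-1}+N(B)$, a witness $w_0\in\Vc_{j_{k+1}}\cap\pg^k(B)$ with $w_0\notin\pg^{k+1}(B)$ (which exists by~\eqref{D1_eq3}) decomposes as $w_0=y+z$ with $z\in N(B)\subseteq\pg^{k+1}(B)$; then $y=w_0-z\in\Vc_{j_{k+1}-1}\cap\pg^k(B)$, and the minimality of $j_{k+1}$ yields $y\in\pg^{k+1}(B)$, whence $w_0\in\pg^{k+1}(B)$, again contradicting the choice of $w_0$.

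The two strict inequalities then fall out directly from Lemma~\ref{R1}. Part~\eqref{R1_item3} gives $\Vc_{i_{k+1}}\cap\pg^k(B)\subseteq\pg^{k+1}(B)$, so by monotonicity $\Vc_i\subseteq\Vc_{i_{k+1}}$ for $i\le i_{k+1}$ no such $i$ can satisfy the condition defining $j_{k+1}$ in~\eqref{D1_eq3}; hence $i_{k+1}<j_{k+1}$. Similarly, applying~\eqref{R1_item4} one step earlier gives $\Vc_{i_k}\cap\pg^{k-1}(B)\perp_B\pg^k(B)$, which (since $\pg^k(B)\subseteq\pg^{k-1}(B)$) forces $\Vc_{i_k}\cap\pg^k(B)\perp_B\pg^k(B)$, and monotonicity in $i$ combined with~\eqref{D1_eq1} yields $i_{k+1}>i_k$.

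The main conceptual hurdle is really the first step: ensuring $N(B)\subseteq\pg^k(B)$ at every stage. This is what allows the $z$-summand in each decomposition $x=y+z$ (with $y\in\Vc_{i-1}$ and $z\in N(B)$) to be absorbed harmlessly into $\pg^k(B)$, so that $y$ automatically lies in $\Vc_{i-1}\cap\pg^k(B)$ and the minimality conditions in Definition~\ref{D1} can be applied to it. Once this inclusion is established, all four assertions reduce to short contradiction arguments using only Lemma~\ref{R1} and the definitions of $i_{k+1}$, $j_{k+1}$.
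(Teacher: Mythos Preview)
Your proof is correct and follows essentially the same route as the paper's. Both hinge on the inclusion $N(B)\subseteq\pg^k(B)$ (obtained via Lemma~\ref{L1}) and then derive the jump-set memberships by contradiction and the two inequalities from Lemma~\ref{R1}; the only cosmetic differences are that you argue elementwise (decomposing a single witness $x_0=y+z$) where the paper intersects whole subspaces via the modular law, and for $i_k<i_{k+1}$ you invoke Lemma~\ref{R1}\eqref{R1_item4} directly whereas the paper rederives the same fact from the definition of~$\pg^k(B)$.
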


\begin{proof}
If $i_k\not\in\jump N(B)$, then $\Vc_{i_k-1}\subseteq\Vc_{i_k}\subseteq\Vc_{i_k-1}+N(B)$, 
hence 
$$\Vc_{i_k}=\Vc_{i_k-1}+(\Vc_{i_k}\cap N(B)).$$ 
On the other hand 
\begin{equation}
\label{L2_proof_eq1}
N(B)=N(B_m)\subseteq\pg(B)=\pg^d(B)\subseteq\pg^{k-1}(B)
\end{equation}
(where the second equality follows by Lemma~\ref{L1}) 
hence 
\begin{equation}
\label{L2_proof_eq2}
\Vc_{i_k}\cap\pg^{k-1}(B)=(\Vc_{i_k-1}\cap\pg^{k-1}(B))+(\Vc_{i_k}\cap N(B)).
\end{equation}
By the definition of $i_k$ we have $\Vc_{i_k-1}\cap\pg^{k-1}(B)\perp_B\pg^{k-1}(B)$ hence, 
by \eqref{L2_proof_eq2}, we obtain 
$\Vc_{i_k}\cap\pg^{k-1}(B)\perp_B\pg^{k-1}(B)$, 
which is a contradiction with the definition of $i_k$. 
Consequently $i_k\in\jump N(B)$. 

If $j_k\not\in\jump N(B)$, then $\Vc_{j_k-1}\subseteq\Vc_{j_k}\subseteq\Vc_{j_k-1}+N(B)$, 
hence 
$$\Vc_{j_k}=\Vc_{j_k-1}+(\Vc_{j_k}\cap N(B)).$$ 
Then, by \eqref{L2_proof_eq1}, 
\begin{equation}
\label{L2_proof_eq3}
\Vc_{j_k}\cap\pg^{k-1}(B)=(\Vc_{j_k-1}\cap\pg^{k-1}(B))+(\Vc_{j_k}\cap N(B)).
\end{equation}
On the other hand, by the definition of $j_k$, we have $\Vc_{j_k-1}\cap\pg^{k-1}(B)\subseteq\pg^k(B)$ hence, by \eqref{L2_proof_eq3}, 
$$\Vc_{j_k}\cap\pg^{k-1}(B)\subseteq\pg^k(B)+N(B)\subseteq \pg^k(B)+\pg(B)
=\pg^k(B)+\pg^d(B)\subseteq\pg^k(B)$$
(where the equality follows by Lemma~\ref{L1}), and we thus obtained a contradiction with the definition of $j_k$. 
Consequently $j_k\in\jump N(B)$. 

We now prove that $i_k<j_k$. 
To this end, by the definition of $j_k$, it suffices to show that 
$$ 
\Vc_{i_k}\cap\pg^{k-1}(B)\subseteq\pg^k(B). 
$$
But this follows from 
Lemma~\ref{R1}\eqref{R1_item3}, applied for $k-1$ instead of $k$.

It remains to prove that $i_k<i_{k+1}$. 
To this end, by the definition of $i_{k+1}$, it suffices to prove that $\Vc_{i_k}\cap\pg^k(B)\perp_B\pg^k(B)$. 
In fact, by the definition of $\pg^k(B)$, we obtain 
\begin{equation}
\label{L2_proof_eq6}
\pg^k(B)= (\Vc_{i_k}\cap\pg^{k-1}(B))^{\perp_B}\cap\pg^{k-1}(B)
\end{equation}
which further implies 
$$\Vc_{i_k}\cap\pg^k(B)= 
(\Vc_{i_k}\cap\pg^{k-1}(B))^{\perp_B}\cap(\Vc_{i_k}\cap\pg^{k-1}(B)).$$
Since $\pg^k(B)\subseteq\pg^{k-1}(B)$, 
it then follows that $\Vc_{i_k}\cap\pg^k(B)=\Vc_{i_k}\cap\pg^{k-1}(B)$, 
and on the other hand $\Vc_{i_k}\cap\pg^{k-1}(B)\perp_B\pg^k(B)$ by \eqref{L2_proof_eq6}, hence finally $\Vc_{i_k}\cap\pg^k(B)\perp_B\pg^k(B)$. 
This completes the proof. 
\end{proof}

\begin{lemma}
\label{L3}
The mapping $\{1,\dots,d\}\to\jump N(B)\setminus\jump \pg(B)$, $k\mapsto i_k$, is a well-defined increasing bijection. 
\end{lemma}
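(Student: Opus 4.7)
The plan is to verify three properties of the mapping $k\mapsto i_k$: well-definedness into $\jump N(B)\setminus\jump\pg(B)$, strict monotonicity, and surjectivity. Monotonicity and the weaker inclusion $i_k\in\jump N(B)$ are already provided by Lemma~\ref{L2}, so the content reduces to verifying $i_k\notin\jump\pg(B)$ together with a cardinality count. Using Lemma~\ref{L1} and the equality $|\jump\Wc|=m-\dim\Wc$ recalled after the definition of Schubert cells, one has $|\jump\pg(B)|=d$ and $|\jump N(B)|=2d$; since $N(B)\subseteq\pg(B)$ forces $\jump\pg(B)\subseteq\jump N(B)$, this yields $|\jump N(B)\setminus\jump\pg(B)|=d$, matching the size of $\{i_1,\dots,i_d\}$.

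The key auxiliary statement I would establish is $\jump\pg^k(B)=\{j_1,\dots,j_k\}$ for $k=0,1,\dots,d$, by induction on~$k$. The base case is trivial. For the inductive step, Lemma~\ref{R1}(i) gives $\pg^k(B)\subsetneq\pg^{k-1}(B)$ with codimension one, which forces $\jump\pg^{k-1}(B)\subseteq\jump\pg^k(B)$ and $|\jump\pg^k(B)|=|\jump\pg^{k-1}(B)|+1$; thus exactly one new index enters at step~$k$. To identify it as $j_k$, I would pick $y_k\in\Vc_{j_k}\cap\pg^{k-1}(B)\setminus\pg^k(B)$ via Lemma~\ref{R1}(ii) and track the function $j\mapsto\dim(\Vc_j+\pg^{k-1}(B))-\dim(\Vc_j+\pg^k(B))\in\{0,1\}$, which equals $1$ precisely when $y_k\notin\Vc_j+\pg^k(B)$. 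The inclusion $\Vc_{j_k-1}\cap\pg^{k-1}(B)\subseteq\pg^k(B)$ from the definition of $j_k$ rules out $y_k\in\Vc_j+\pg^k(B)$ for $j<j_k$, while $y_k\in\Vc_{j_k}$ gives the opposite inclusion for $j\ge j_k$. Comparing consecutive differences pins the new jump at $j_k$, and taking $k=d$ yields $\jump\pg(B)=\{j_1,\dots,j_d\}$.

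To finish, I would show $i_k\ne j_l$ for every $l$. For $l\ge k$, Lemma~\ref{L2} and strict monotonicity of the $i_k$'s give $i_k\le i_l<j_l$. For $l<k$, it suffices to prove the stronger fact $i_k\notin\jump\pg^{k-1}(B)=\{j_1,\dots,j_{k-1}\}$: the vector $x_0\in\Vc_{i_k}\cap\pg^{k-1}(B)$ with $x_0\not\perp_B\pg^{k-1}(B)$ from the proof of Lemma~\ref{R1}(i) cannot lie in $\Vc_{i_k-1}$, because $\Vc_{i_k-1}\cap\pg^{k-1}(B)\perp_B\pg^{k-1}(B)$ by minimality in the definition of $i_k$. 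Hence $\Vc_{i_k}\cap\pg^{k-1}(B)$ strictly contains $\Vc_{i_k-1}\cap\pg^{k-1}(B)$, which is equivalent to $i_k\notin\jump\pg^{k-1}(B)$. Combined with the cardinality match of the first paragraph, this produces the asserted bijection.

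The main obstacle I anticipate is the identification $\jump\pg^k(B)=\{j_1,\dots,j_k\}$: one must carefully relate the external dimension flag $\{\Vc_j+\pg^k(B)\}_j$ with the inductive construction of the $j_k$ from Definition~\ref{D1} via Lemma~\ref{R1}. Once that identification is in place, the disjointness of $\{i_l\}$ and $\{j_l\}$ and the final counting step follow directly.
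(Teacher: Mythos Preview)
Your argument is correct, but it follows a genuinely different route from the paper's. The paper proves $i_k\notin\jump\pg(B)$ \emph{directly}: it takes the vector $x_0\in\Vc_{i_k}\cap\pg^{k-1}(B)$ with $x_0\not\perp_B\pg^{k-1}(B)$, observes (as you do) that $x_0\notin\Vc_{i_k-1}$ so that $\Vc_{i_k}=\RR x_0\dotplus\Vc_{i_k-1}$, and then shows $x_0\in\pg(B)$ by combining the equality $\Vc_{i_k}\cap\pg^k(B)=\Vc_{i_k}\cap\pg^{k-1}(B)\perp_B\pg^k(B)$ (from the proof of Lemma~\ref{L2}) with the iterated inclusion $\pg^k(B)^{\perp_B}\cap\pg^k(B)\subseteq\pg(B)$ from Lemma~\ref{R1}\eqref{R1_item5}. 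This gives $\Vc_{i_k}\subseteq\Vc_{i_k-1}+\pg(B)$ in one stroke, with no reference to the $j_l$'s at all.

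By contrast, you first establish the stronger auxiliary fact $\jump\pg^k(B)=\{j_1,\dots,j_k\}$ for every $k$, which in particular recovers Lemma~\ref{L4} (the paper proves that lemma separately, \emph{after} Lemma~\ref{L3}). You then rule out $i_k\in\{j_1,\dots,j_d\}$ by a two-case disjointness argument. Your approach yields more: it identifies the jump sets of all the intermediate subspaces $\pg^k(B)$, not just of $\pg(B)$. The paper's approach is shorter for the purpose of Lemma~\ref{L3} alone, since it avoids the inductive identification of $\jump\pg^k(B)$ and instead uses the single isotropy inclusion from Lemma~\ref{R1}\eqref{R1_item5} to place $x_0$ directly in $\pg(B)$. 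One small comment: your claim that $\Vc_{j_k-1}\cap\pg^{k-1}(B)\subseteq\pg^k(B)$ ``rules out $y_k\in\Vc_j+\pg^k(B)$ for $j<j_k$'' deserves the one-line justification you presumably have in mind (write $y_k=v+p$ and observe $v\in\Vc_{j_k-1}\cap\pg^{k-1}(B)\subseteq\pg^k(B)$), but the logic is sound.
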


\begin{proof}
It follows by Lemma~\ref{L2} that $k\mapsto i_k$ is an increasing mapping, 
hence injective. 
On the other hand,  $\jump \pg(B)\subseteq\jump N(B)$ since $N(B)\subseteq\pg(B)$. 
Moreover $\card (\jump \pg(B))=\dim(\Vc/\pg(B))$ and $\card (\jump N(B))=\dim(\Vc/N(B))$ (see, e.g.,  \cite[Prop. 3.4(xi)]{BB17}) 
hence $\card (\jump \pg(B))=d$ and $\card (\jump N(B))=2d$ by Lemma~\ref{L1}. 
Therefore 
$$\card(\jump N(B)\setminus\jump \pg(B))=d,$$ 
and it remains to prove that the injective mapping $k\mapsto i_k$ indeed takes values in the set $\jump N(B)\setminus\jump \pg(B)$. 
We already know that $i_k\in\jump N(B)$ by Lemma~\ref{L2}, 
so it remains to prove that $i_k\not\in\jump \pg(B)$. 

Iterating the inclusion in Lemma~\ref{R1}\eqref{R1_item5} for $k=1,\dots,d-1$, 
and using the fact that $\pg^d(B)=\pg^d(B)^{\perp_B}=\pg(B)\in\Pg(B)$ 
(by Lemma~\ref{L1}), we obtain that for $1\le k\le d$, 
 \begin{equation}
\label{L3_proof_eq1}
\pg^k(B)^{\perp_B}\cap\pg^k(B)\subseteq \pg^d(B)=\pg(B). 
\end{equation}
 
We now prove that $i_k\not\in\jump \pg(B)$, that is, 
\begin{equation}
\label{L3_proof_eq2}
\Vc_{i_k}\subseteq\Vc_{i_k-1}+\pg(B). 
\end{equation}
In fact, by \eqref{R1_proof_eq1}--\eqref{R1_proof_eq2} 
in the proof of Lemma~\ref{R1} (applied for $k-1$ instead of $k$), there is
$x_0\in\Vc_{i_k}\cap\pg^{k-1}(B))$ with $x_0\not\perp_B\pg^{k-1}(B)$ 
and 
$\Vc_{i_k-1}\cap\pg^{k-1}(B)\perp_B\pg^{k-1}(B)$. 
In particular $x_0\in\Vc_{i_k}\setminus\Vc_{i_k-1}$, 
hence $\Vc_{i_k}=\RR x_0\dotplus \Vc_{i_k-1}$. 
Therefore, to complete the proof of~\eqref{L3_proof_eq2}, 
it remains to show that $x_0\in\pg(B)$. 

To this end, recall from the proof of the inequality $i_k<i_{k+1}$ in Lemma~\ref{L2} that 
$\Vc_{i_k}\cap\pg^k(B)=\Vc_{i_k}\cap\pg^{k-1}(B)\perp_B\pg^k(B)$. 
Since $x_0\in \Vc_{i_k}\cap\pg^k(B)$, we then obtain $x_0\in\pg^k(B)^{\perp_B}\cap\pg^k(B)$, hence $x_0\in\pg(B)$ by \eqref{L3_proof_eq1}, and this completes the proof. 
\end{proof}

\begin{lemma}
	\label{L4}
	The mapping $\{1,\dots,d\}\to\jump \pg(B)$, $k\mapsto j_k$, is a well-defined bijection. 
\end{lemma}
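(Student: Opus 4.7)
The plan is to prove Lemma~\ref{L4} by locating each $j_k$ more precisely, showing that
$$ j_k\in\jump\pg^k(B)\setminus\jump\pg^{k-1}(B), $$
and then combining this with the contravariance $\Wc_1\subseteq\Wc_2\Rightarrow\jump\Wc_2\subseteq\jump\Wc_1$ and the cardinality identity $\card(\jump\pg(B))=\dim(\Vc/\pg(B))=d$ (cf.~\cite[Prop.~3.4(xi)]{BB17} and Lemma~\ref{L1}) to upgrade injectivity into bijectivity.

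The first step is to verify both containments $j_k\in\jump\pg^k(B)$ and $j_k\notin\jump\pg^{k-1}(B)$. I would pick a witness $y\in(\Vc_{j_k}\cap\pg^{k-1}(B))\setminus\pg^k(B)$, supplied by the very definition of $j_k$. The minimality forces $y\notin\Vc_{j_k-1}$, for otherwise $y\in\Vc_{j_k-1}\cap\pg^{k-1}(B)\subseteq\pg^k(B)$, a contradiction; hence $\Vc_{j_k}=\Vc_{j_k-1}+\KK y$, and since $y\in\pg^{k-1}(B)$ we conclude $\Vc_{j_k}\subseteq\Vc_{j_k-1}+\pg^{k-1}(B)$, i.e., $j_k\notin\jump\pg^{k-1}(B)$. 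To rule out $\Vc_{j_k}\subseteq\Vc_{j_k-1}+\pg^k(B)$, I would argue by contradiction: writing $y=v+p$ with $v\in\Vc_{j_k-1}$ and $p\in\pg^k(B)\subseteq\pg^{k-1}(B)$, one gets $v=y-p\in\Vc_{j_k-1}\cap\pg^{k-1}(B)\subseteq\pg^k(B)$ again by the minimality of $j_k$, which forces $y=v+p\in\pg^k(B)$, contradicting the choice of $y$. Hence $j_k\in\jump\pg^k(B)$.

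Injectivity of $k\mapsto j_k$ is then immediate: for $1\le k<k'\le d$, the inclusions $\pg(B)=\pg^d(B)\subseteq\pg^{k'-1}(B)\subseteq\pg^k(B)$ together with the contravariance of $\jump$ yield $j_k\in\jump\pg^k(B)\subseteq\jump\pg^{k'-1}(B)$, whereas $j_{k'}\notin\jump\pg^{k'-1}(B)$, so $j_k\ne j_{k'}$. The same chain of inclusions also shows $\{j_1,\dots,j_d\}\subseteq\jump\pg^d(B)=\jump\pg(B)$, and since both this set and $\{1,\dots,d\}$ have cardinality $d$, the injection $k\mapsto j_k$ is automatically a bijection onto $\jump\pg(B)$.

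The only delicate point is the first step, namely the translation of the defining condition for $j_k$, phrased via the intersection $\Vc_{j_k}\cap\pg^{k-1}(B)$, into the Schubert-cell conditions for $\jump\pg^k(B)$ and $\jump\pg^{k-1}(B)$, phrased via the sums $\Vc_{j_k-1}+\pg^k(B)$ and $\Vc_{j_k-1}+\pg^{k-1}(B)$; the codimension-one witness $y$ constructed above is precisely what bridges the two formulations, after which the cardinality count does all the remaining work.
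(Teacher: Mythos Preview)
Your proof is correct and takes a genuinely different route from the paper's. The paper proves the set equality $\{j_k\mid 1\le k\le d\}=\jump\pg(B)$ by double inclusion: for ``$\subseteq$'' it assumes $j_k\notin\jump\pg(B)$ and, using $\pg(B)=\pg^d(B)\subseteq\pg^k(B)$ together with a modular-law computation, reaches a contradiction with the definition of $j_k$; for ``$\supseteq$'' it starts from an arbitrary $j\in\jump\pg(B)$, defines $k_0:=\max\{k\mid \Vc_j\subseteq\Vc_{j-1}+\pg^k(B)\}$, and uses Lemma~\ref{R1}\eqref{R1_item2} to show $j=j_{k_0+1}$. Your argument instead isolates the sharper intermediate fact $j_k\in\jump\pg^k(B)\setminus\jump\pg^{k-1}(B)$, from which both injectivity and the inclusion $\{j_1,\dots,j_d\}\subseteq\jump\pg(B)$ follow at once by the contravariance of $\jump$ along the chain $\pg^0(B)\supseteq\cdots\supseteq\pg^d(B)=\pg(B)$; surjectivity then drops out of the cardinality count. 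Your approach is cleaner and avoids the explicit construction of the inverse $j\mapsto k_0+1$ (and hence the appeal to Lemma~\ref{R1}\eqref{R1_item2}); on the other hand, the paper's proof has the mild advantage of exhibiting that inverse explicitly, which tells you exactly which $k$ a given jump index of $\pg(B)$ comes from.
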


\begin{proof}
One has $\card(\jump\pg(B))=d$ (cf. the beginning of the proof of Lemma~\ref{L3}) hence it suffices to prove the equality 
\begin{equation}
\label{L4_proof_eq1}
\{j_k\mid 1\le k\le d\}=\jump\pg(B).
\end{equation}
We prove this by double inclusion. 

``$\subseteq$'' 
If $j_k\not\in \jump\pg(B)$ then $\Vc_{j_k}\subseteq\Vc_{j_k-1}+\pg(B)=\Vc_{j_k-1}+\pg^d(B)\subseteq \Vc_{j_k-1}+\pg^k(B)$ (where the equality follows by Lemma~\ref{L1}) 
hence 
$$\Vc_{j_k}\cap \pg^{k-1}(B)\subseteq (\Vc_{j_k-1}+\pg^k(B))\cap \pg^{k-1}(B)
=(\Vc_{j_k-1}\cap \pg^{k-1}(B))+\pg^k(B).$$
By the definition of $j_k$ we have 
$\Vc_{j_k-1}\cap \pg^{k-1}(B)\subseteq\pg^k(B)$ hence the above inclusion 
implies $\Vc_{j_k}\cap \pg^{k-1}(B)\subseteq \pg^k(B)+\pg^k(B)=\pg^k(B)$, 
which is a contradiction with the definition of $j_k$. 

``$\supseteq$'' 
Let $j\in \jump\pg(B)$ be arbitrary. 
Then $\Vc_j\not\subset\Vc_{j-1}+\pg(B)$. 
Let us define 
$$k_0:=\max\{k\in\{0,\dots,d\}\mid \Vc_j\subseteq\Vc_{j-1}+\pg^k(B)\}$$
so that $k_0\le d-1$ since $\Vc_j\not\subset\Vc_{j-1}+\pg(B)=\Vc_{j-1}+\pg^d(B)$ 
 (where the equality follows by Lemma~\ref{L1} again). 
We will prove the equality 
\begin{equation*}
j=j_{k_0+1}.
\end{equation*}
In fact, by the definition of $k_0$ we have 
\begin{equation}
\label{L4_proof_eq3}
\Vc_j\not\subset\Vc_{j-1}+\pg^{k_0+1}(B)
\end{equation}
and 
\begin{equation}
\label{L4_proof_eq4}
\Vc_{j-1}\subseteq \Vc_j\subseteq\Vc_{j-1}+\pg^{k_0}(B).
\end{equation} 
Therefore $\Vc_j=\Vc_{j-1}+(\Vc_j\cap\pg^{k_0}(B))$. 
It then follows by \eqref{L4_proof_eq3} that 
$\Vc_j\cap\pg^{k_0}(B)\not\subset\pg^{k_0+1}(B)$ 
hence, by the definition of $j_{k_0+1}$, we obtain $j_{k_0+1}\le j$. 

If we assume $j_{k_0+1}\le j-1$ then 
\begin{eqnarray*}
\Vc_j
\subseteq
 & \Vc_{j-1}+\pg^{k_0}(B)  \phantom{AAAAAAA}
\qquad &(\text{by \eqref{L4_proof_eq4}})\\
\subseteq &\Vc_{j-1}+\pg^{k_0+1}(B)+\Vc_{j_{k_0+1}} 
\qquad &(\text{by Lemma~\ref{R1}\eqref{R1_item2}})\\
=&\Vc_{j-1}+\pg^{k_0+1}(B) \phantom{AAAAAi}
\qquad &(\text{by the assumption $j_{k_0+1}\le j-1$})
\end{eqnarray*}
 which is a contradiction with the maximality condition in the definition of $k_0$. 
 This completes the proof of the equality $j=j_{k_0+1}$, hence of the inclusion $\supseteq$ in \eqref{L4_proof_eq1} since $j\in\jump\pg(B)$ is arbitrary. 
\end{proof}

\begin{lemma}
\label{L5}
Let the integers $1\le r_1<\cdots<r_{m-d}<r_{m-d+1}=m+1$ satisfy 
$$\{1,\dots,m\}\setminus\jump\pg(B)=\{r_1,\dots,r_{m-d}\}.$$
Then 
\begin{enumerate}[\rm (i)]
\item $2\ell-j \ge 0$ whenever $r_\ell\le j<r_{\ell+1}$ for $\ell=1,\dots,m-d$. 
\item $B\in (\wedge^2\Vc)^*_{\mathbf k}$, where ${\mathbf k}=(k_1, k_2, \dots, k_m)$, with 
$$ k_j = 
\begin{cases} 
j & \text{  if } 1\le j < r_1,\\
2\ell -j  & \text{ if }  r_\ell\le j<r_{\ell+1}  \text{ for } \ell=1,\dots,m-d.
\end{cases}
$$
\end{enumerate}
 \end{lemma}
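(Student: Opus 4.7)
The plan is to identify the intersection $\pg(B)\cap\Vc_j$ with the Vergne polarization $\pg(B_j)$ of the restricted form, and then to read off $\dim N(B_j)$ from the Lagrangian dimension formula applied inside~$\Vc_j$.

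First I would apply the Vergne construction from Theorem~\ref{Lcont} to $B_j$ on~$\Vc_j$ with the flag $\{0\}=\Vc_0\subseteq\cdots\subseteq\Vc_j$, obtaining $\pg(B_j)=N(B_1)+\cdots+N(B_j)$. Since $N(B_i)\subseteq\Vc_i\subseteq\Vc_j$ and $N(B_i)\subseteq\pg(B)$ for all $i\le j$, this immediately gives $\pg(B_j)\subseteq\pg(B)\cap\Vc_j$. For the reverse inclusion I would note that $\pg(B)\cap\Vc_j$ is $B_j$-isotropic because $\pg(B)\in\Pg(B)$ by Lemma~\ref{L1}, while Lemma~\ref{L1} applied to $(B_j,\Vc_j)$ gives $\pg(B_j)\in\Pg(B_j)$; hence $\pg(B_j)$ is maximal among $B_j$-isotropic subspaces of~$\Vc_j$ and must coincide with the $B_j$-isotropic overspace $\pg(B)\cap\Vc_j$.

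Next I would compute $\dim(\pg(B)\cap\Vc_j)$ from the standard jump-index identity $\dim(\Vc_j\cap\Wc)=j-|\jump\Wc\cap\{1,\dots,j\}|$ (see~\cite[Sect.~3]{BB17}): for $r_\ell\le j<r_{\ell+1}$ exactly $\ell$ of the indices $r_1,\dots,r_{m-d}$ lie in $\{1,\dots,j\}$, hence $|\jump\pg(B)\cap\{1,\dots,j\}|=j-\ell$ and $\dim(\pg(B)\cap\Vc_j)=\ell$. Since $\pg(B_j)$ is $B_j$-Lagrangian in~$\Vc_j$, $\dim\pg(B_j)=(j+k_j)/2$; combined with the intersection identity this forces $k_j=2\ell-j$, which proves~(ii), and~(i) follows at once from $k_j\ge 0$. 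The only genuine step is the intersection identity $\pg(B_j)=\pg(B)\cap\Vc_j$, whose reverse inclusion crucially uses the maximality in the definition of a $B_j$-Lagrangian; the first branch $1\le j<r_1$ of the formula in~(ii) is vacuous since $N(B_1)=\Vc_1\subseteq\pg(B)$ forces $r_1=1$.
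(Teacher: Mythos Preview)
Your proof is correct and follows essentially the same route as the paper's: both establish that $\pg(B)\cap\Vc_j$ is $B_j$-Lagrangian in $\Vc_j$, compute its dimension via the jump-index formula from \cite{BB17}, and then apply the Lagrangian dimension identity $\dim N(B_j)=2\dim(\pg(B)\cap\Vc_j)-j$. The only difference is that the paper cites \cite[Lemma~1.12.3]{Dix74} directly for the Lagrangian property of $\pg(B)\cap\Vc_j$, whereas you supply a short self-contained argument via the identity $\pg(B_j)=\pg(B)\cap\Vc_j$; your observation that $r_1=1$ (making the first branch vacuous) is also correct.
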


\begin{proof}
	We first note that the definition of the integers $ r_1<\cdots<r_{m-d}$ makes sense by Lemma~\ref{L4}. 
Moreover, we have by \cite[Prop. 3.4((viii)--(ix))]{BB17} 
\begin{equation}
\label{L5_proof_eq1}
\dim(\pg(B)\cap\Vc_j)=\begin{cases}  j & \text {if } 1 \le j < r_1\\
\ell & \text{ if }r_\ell\le j<r_{\ell+1}
\end{cases}
\end{equation}
On the other hand, it follows by \cite[Lemma 1.12.3]{Dix74} that  $\pg(B)\cap\Vc_j$ is a Lagrangian subspace for the presymplectic structure $B_j=B\vert_{\Vc_j\times\Vc_j}$ on $\Vc_j$, hence $\dim(\Vc_j/N(B_j))=2\dim(\Vc_j/(\pg(B)\cap\Vc_j))$. 
This further implies $\dim\Vc_j-\dim N(B_j)=2\dim\Vc_j-2\dim(\pg(B)\cap\Vc_j)$, 
hence 
$\dim N(B_j)=2\dim(\pg(B)\cap\Vc_j)-\dim\Vc_j$. 
Now the assertion follows by~\eqref{L5_proof_eq1}. 
\end{proof}

\begin{proof}[Proof of Theorem~\ref{Sopt3}]
	Use Theorem~\ref{Lcont} and Lemma~\ref{L5}. 
\end{proof}

\section{Application to completely solvable Lie algebras}
\label{Sect6}

Throughout this section, for an arbitrary finite-dimensional Lie algebra $\gg$ over~$\KK$
we consider the linear mapping 
$$\beta\colon\gg^*\to(\wedge^2\gg)^*,\quad \beta(\xi):=B_\xi,$$
where $B_\xi(x,y):=\langle\xi,[x,y]\rangle$ for all $x,y\in\gg$ and $\xi\in\gg^*$. 
We recall that the Lie algebra $\gg$ is called \emph{completely solvable} if it admits a Jordan-H\"older sequence, that is, an increasing sequence of ideals of $\gg$, 
$$\{0\}=\gg_0\subsetneqq\gg_1\subsetneqq\cdots\subsetneqq\gg_m=\gg$$
with $\dim\gg_j=j$ for $j=0,1,\dots,m$. 
Equivalently, $\gg$ is completely solvable if and only if it is a solvable Lie algebra and all the eigenvalues of the linear mapping $\ad_\gg x\colon\gg\to\gg$, $(\ad_\gg x)y=[x,y]$, belong to $\KK$ (rather than to an algebraic closure of $\KK$ )for every $x\in\gg$. 
If the field $\KK$ is algebraically closed, (e.g., $\KK=\CC$), then $\gg$ is a completely solvable Lie algebra if and only if it is a solvable Lie algebra, by Sophus Lie's classical theorem on representations of solvable Lie algebras. 

The main result of this section (Theorem~\ref{polcont}) establishes continuity properties of the \emph{Vergne mapping} 
\begin{equation}
\label{palg}
\pg_{\alg}:=\pg\circ\beta\colon\gg^*\to\Gr(\gg),\quad 
\pg_{\alg}(\xi)=\gg_1(\xi\vert_{\gg_1})+\cdots+\gg_{m-1}(\xi\vert_{\gg_{m-1}})+\gg_m(\xi)
\end{equation}
associated to a Jordan-H\"older sequence in a completely solvable Lie algebra $\gg$ over $\KK\in\{\RR,\CC\}$ as above, that maps $\xi\in\gg^*$ to a polarization at $\xi$, called the Vergne polarization. 
This construction goes back to \cite{Ve70}. 
The continuity domains of the mapping $\pg_{\alg}$ will be described in terms of the sets 
\begin{equation}
\label{Xiks}
\Xi_{\mathbf{k}}:=\{\xi\in\gg^*\mid \dim\gg_j(\xi\vert_{\gg_j})=k_j\text{ for }j=1,\dots,m\}
\end{equation}
defined for any $\mathbf{k}=(k_1,\dots,k_m)\in J_m$, where we use the notation \eqref{Lcont_eq1}.

\subsection*{Continuity of isotropy groups}

\begin{proposition}\label{isocont}
	Let $G$ be a 
	Lie group of dimension $m\ge 1$, 
	with the quotient map of its corresponding coadjoint action denoted by 
	$q\colon\gg^*\to\gg^*/G$.  
	Let $(\gg^*/G)_d$ be the set of all coadjoint orbits of some fixed dimension~$d\ge 0$, 
	and denote $\Xi_d:=q^{-1}((\gg^*/G)_d)\subseteq\gg^*$, 
	the set of all functionals in $\gg^*$ with $d$-dimensional coadjoint orbits. 
	Then, for any subset $\Xi\subseteq \gg^*$,  the mapping
	$$\Xi\to\Gr(\gg),\quad \xi\mapsto \gg(\xi)$$
	is continuous if and only if for every even integer $d\in\{0,\dots,m\}$ the subset $\Xi\cap\Xi_d$ is relatively closed in~$\Xi$.  
\end{proposition}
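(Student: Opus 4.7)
The key observation is that for every $\xi\in\gg^*$, the isotropy subalgebra is exactly the null-space of the associated skew form, namely
\[
\gg(\xi)=\{x\in\gg\mid \langle\xi,[x,\cdot]\rangle=0\}=N(B_\xi)=(N\circ\beta)(\xi),
\]
so the map in the statement is the composition $N\circ\beta|_\Xi$ of the continuous linear map $\beta$ with the null-space map $N$. Moreover
\[
\dim(\text{coadjoint orbit of }\xi)=\dim G-\dim\gg(\xi)=m-\dim N(\beta(\xi)),
\]
so coadjoint orbit dimensions are all even (since skew bilinear forms have even rank) and
\[
\Xi\cap\Xi_d=\Xi\cap\beta^{-1}\bigl((\wedge^2\gg)^*_{m-d}\bigr)
\quad\text{for every }d\ge 0.
\]
The sets $\beta^{-1}((\wedge^2\gg)^*_k)$ with $k\not\equiv m\pmod 2$ are empty, so quantifying over even~$d$ in the statement is the same as quantifying over all $k\in\{0,\dots,m\}$ in the pulled-back version of Theorem~\ref{nullcont-opt}.

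The plan is to transfer Theorem~\ref{nullcont-opt} through $\beta$. For sufficiency, assume each $\Xi\cap\Xi_d$ is relatively closed in~$\Xi$; then $\Xi$ is a finite disjoint union of relatively clopen pieces $\Xi\cap\Xi_d$, and on each such piece $\beta$ takes its values in a single stratum $(\wedge^2\gg)^*_{m-d}$ on which $N$ is continuous by Proposition~\ref{nullcont}. Composing with the continuous map $\beta$ and patching the finitely many relatively open pieces yields the continuity of $\xi\mapsto\gg(\xi)$ on~$\Xi$. For necessity, suppose that map is continuous on~$\Xi$. As in the second half of the proof of Theorem~\ref{nullcont-opt}, the connected components $\Gr_k(\gg)$ of the Grassmann manifold are closed and subspaces of distinct dimension lie in distinct components; since $\Xi\cap\Xi_d$ is the preimage of $\Gr_{m-d}(\gg)$ under the continuous map $\xi\mapsto\gg(\xi)$, it is relatively closed in~$\Xi$.

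The only step that requires care is the identification $\gg(\xi)=N(\beta(\xi))$ together with the matching between orbit-dimension stratification on $\gg^*$ and the null-space stratification on $(\wedge^2\gg)^*$; once this bookkeeping is in place, the equivalence is immediate from Proposition~\ref{nullcont} and the connected-component argument that underlies Theorem~\ref{nullcont-opt}.
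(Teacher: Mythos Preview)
Your proposal is correct and follows essentially the same approach as the paper: both directions rest on the identification $\gg(\xi)=N(\beta(\xi))$, with sufficiency obtained by partitioning $\Xi$ into finitely many relatively clopen strata and applying Proposition~\ref{nullcont} on each, and necessity obtained from the fact that the $\Gr_k(\gg)$ are closed (connected components) so that each $\Xi\cap\Xi_d$ is the preimage of a closed set. Your explicit framing of the argument as pulling back Theorem~\ref{nullcont-opt} through the continuous linear map~$\beta$ is a slightly cleaner packaging of what the paper does by repeating that theorem's sequential argument in place, but the content is the same.
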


\begin{proof}
If for every even integer $d\in\{0,\dots,m\}$ the subset $\Xi\cap\Xi_d$ is relatively closed in~$\Xi$, then the disjoint union 
$$\Xi=\bigsqcup_{d\in2\NN\cap \{0,\dots,m\}}\Xi\cap\Xi_d$$
is a finite partition of $\Xi$ into relatively closed subsets, hence each of these subsets is also relatively open. 
Hence it suffices to prove that the mapping $\xi\mapsto\gg(\xi)$ is continuous on $\Xi\cap\Xi_d$ for every even integer $d\in\{0,\dots,m\}$. 
To this end we fix $d\in\{0,\dots,m\}$ and we claim that the mapping $\Xi_d\to\Gr(\gg)$, $\xi\mapsto\gg(\xi)$ is continuous. 
In fact, for every $\xi\in\gg^*$ we have  
\begin{equation}
\label{isocont_proof_eq1}
\gg(\xi)=N(B_\xi)=N(\beta(\xi)).
\end{equation}
Moreover, the mapping 
$$N\circ\beta\vert_{\Xi_d}\colon\Xi_d\to\Gr_{m-d}(\gg),\quad \xi\mapsto \Ker(\beta(\xi))$$
is continuous 
 as the composition of the continuous maps $\beta$ and $N\vert_{(\wedge^2\gg)^*_d}$ (cf. Proposition~\ref{nullcont})	
and this proves our claim. 

Conversely, let us assume that  the mapping
$\Xi\to\Gr(\gg)$, $\xi\mapsto \gg(\xi)$ is continuous. 
We prove that for arbitrary $d\in2\NN\cap\{0,\dots,m\}$ the set 
$\Xi\cap\Xi_d$ is relatively closed in~$\Xi$. 
To this end we claim that if $\xi\in\Xi$ and $\{\xi^{(i)}\}_{i\in\NN}$ is a sequence in $\Xi\cap\Xi_d$ with $\lim\limits_{k\to\infty}\xi^{(i)}=\xi$ in $\gg^*$, then $\xi\in\Xi_d$. 
In fact, since the mapping $\Xi\to\Gr(\gg)$, $\xi\mapsto \gg(\xi)$ is assumed to be continuous, we obtain $\lim\limits_{k\to\infty}\gg(\xi^{(i)})=\gg(\xi)$ in $\Gr(\gg)$. 
Here $\gg(\xi^{(i)})\in\Gr_{m-d}(\gg)$ since $\xi^{(i)}\in\Xi_d$ for every $i\in\NN$. 
Since $\Gr_{m-d}(\gg)$ is a closed subset of $\Gr(\gg)$, it follows that $\gg(\xi)\in\Gr_{m-d}(\gg)$, that is, $\xi\in\Xi_d$, as claimed, 
and this completes the proof. 
\end{proof}

\subsection*{Continuity properties of polarizations}

Let $\gg$ be any finite-dimensional real Lie algebra. 
We define $\Gr_{\alg}(\gg)$ 
	as the set of all subalgebras of $\gg$. 
	As proved in \cite[1.11.9]{Dix74}, $\Gr_{\alg}(\gg)$ is a Zariski-closed subset of 
	the Grassmann manifold $\Gr(\gg)$. 

\begin{definition}\label{polarizations}
	\normalfont
	Let $\gg$ be any finite-dimensional real Lie algebra. 
	
	For every $\xi\in\gg^*$ we define 
	$$\begin{aligned}
	\Sbd(\xi)
	&:=\{\hg\in \Gr_{\alg}(\gg)\mid [\hg,\hg]\subseteq\Ker\xi\}, \\
	\Pg(\xi)
	&:=\{\hg\in \Gr_{\alg}(\gg)\mid \hg \text{ maximal element of }\Sbd(\xi)\} \\
	\end{aligned}$$
	hence $\Sbd(\xi)$ is the set of all subordinated subalgebras and $\Pg(\xi)$ is the set of all polarizations at~$\xi$. 
\end{definition}

\begin{remark}\label{gralg}
	\normalfont
	Taking $F:=\Gr_{\alg}(\gg)$,
	the relation between Definitions \ref{polarizations} and \ref{Lpolarizations} (with $F=\Gr_{\alg}(\gg)$) is given by 
	$$\Sbd(\xi)=\Sbd_F(B_\xi)\text{ and } 
	\Pg(\xi)=\Pg_F(B_\xi)$$
for all $\xi\in\gg^*$. 
\end{remark}

\begin{proposition}\label{uppsemi}
	Let $k\ge 0$ be any integer 
	and 
	$\Xi_k:=\{\xi\in\gg^*\mid \dim\gg(\xi)=k\}\subseteq\gg^*$. 
	Then the map 
	$\Pg\vert_{\Xi_k}\colon\Xi_k\to 2^{\Gr_{\alg}(\gg)}$ 
	is upper semicontinuous. 
\end{proposition}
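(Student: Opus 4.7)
The plan is to reduce this to Proposition~\ref{Luppsemi} applied with $F := \Gr_{\alg}(\gg)$, which is legitimate since $\Gr_{\alg}(\gg)$ is a closed subset of $\Gr(\gg)$ by \cite[1.11.9]{Dix74}. The key observation, already recorded in Remark~\ref{gralg}, is that $\Pg(\xi) = \Pg_F(B_\xi) = \Pg_F(\beta(\xi))$ for every $\xi \in \gg^*$, so $\Pg|_{\Xi_k}$ factors through the linear (hence continuous) map $\beta\colon\gg^*\to(\wedge^2\gg)^*$ as $\Pg|_{\Xi_k} = \Pg_F \circ \beta|_{\Xi_k}$.

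First I would check that $\beta$ sends $\Xi_k$ into $(\wedge^2\gg)^*_k$. This is immediate from equation~\eqref{isocont_proof_eq1} in the proof of Proposition~\ref{isocont}, namely $\gg(\xi) = N(B_\xi) = N(\beta(\xi))$: the condition $\dim \gg(\xi) = k$ is equivalent to $\dim N(\beta(\xi)) = k$, i.e.\ $\beta(\xi)\in(\wedge^2\gg)^*_k$. So in fact $\Xi_k = \beta^{-1}((\wedge^2\gg)^*_k)$, and $\beta|_{\Xi_k}\colon \Xi_k \to (\wedge^2\gg)^*_k$ is a continuous map between these spaces.

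Next I would verify that upper semicontinuity is preserved under precomposition with a continuous map. Given a convergent sequence $\xi^{(i)}\to\xi$ in $\Xi_k$, continuity of $\beta$ gives $\beta(\xi^{(i)})\to\beta(\xi)$ in $(\wedge^2\gg)^*_k$; Proposition~\ref{Luppsemi} then yields
\[
\limsup_{i\to\infty}\Pg_F(\beta(\xi^{(i)}))\subseteq \Pg_F(\beta(\xi)),
\]
and via Remark~\ref{gralg} this is exactly $\limsup_{i\to\infty}\Pg(\xi^{(i)})\subseteq\Pg(\xi)$, which is the desired upper semicontinuity of $\Pg|_{\Xi_k}$.

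There is essentially no obstacle here: the statement is a direct transport of Proposition~\ref{Luppsemi} from $(\wedge^2\gg)^*_k$ to $\Xi_k$ along the continuous linear map $\beta$, using that $\Gr_{\alg}(\gg)$ qualifies as the closed subset $F$ required by that proposition. The only point worth stating explicitly is the identification $\Xi_k = \beta^{-1}((\wedge^2\gg)^*_k)$, which ensures that the composition lands in the proper continuity stratum on which Proposition~\ref{Luppsemi} is applicable.
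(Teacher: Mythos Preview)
Your proposal is correct and follows exactly the paper's own approach: the paper's proof invokes precisely the same ingredients---the identity $\gg(\xi)=N(B_\xi)$, Remark~\ref{gralg}, Proposition~\ref{Luppsemi} with $F=\Gr_{\alg}(\gg)$, and the continuity of $\beta$---just stated more tersely. You have simply spelled out the details, including the identification $\Xi_k=\beta^{-1}((\wedge^2\gg)^*_k)$ and the routine verification that upper semicontinuity pulls back along a continuous map.
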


\begin{proof}
Recall that $\gg(\xi)=N(B_\xi)$ for all $\xi\in\gg^*$, as noted in the proof of Proposition~\ref{isocont}. 
Then use Remark~\ref{gralg} and Proposition~\ref{Luppsemi}, as well as the fact that the mapping $\beta\colon\gg^*\to(\wedge^2\gg)^*$ is continuous. 
\end{proof}

Assume now that $\gg$ is a  
	completely solvable Lie algebra over $\KK\in\{\RR,\CC\}$ with a Jordan-H\"older series 
	$\{0\}=\gg_0\subsetneqq\gg_1\subsetneqq\cdots\subsetneqq\gg_m=\gg$,  
	and the corresponding Vergne mapping 
	$\pg_{\alg}\colon\gg^*\to\Gr(\gg)$. 
In fact, we have the commutative diagram 
$$\xymatrix{  (\wedge^2\gg)^* \ar[r]^{\pg} & \Gr(\gg) \\
	\gg^* \ar[r]_{\pg_{\alg}} \ar[u]^{\beta} & \Gr_{\alg}(\gg) \ar@{^{(}->}[u]
} .$$
Here we have used that, since $\gg$ is a completely solvable Lie algebra, $\pg_{\alg}(\xi)$ is a subalgebra of $\gg$, that is, $\pg_{\alg}(\xi)\in\Gr_{\alg}(\gg)$ for every $\xi\in\gg^*$;
see for instance \cite[Prop. 1.12.10]{Dix74}.

In the following theorem we use notation introduced in \eqref{palg} and \eqref{Xiks}. 

\begin{theorem}\label{polcont}
	Let $\gg$ be a  
	completely solvable Lie algebra over $\KK\in\{\RR,\CC\}$ with a Jordan-H\"older series 
	$\{0\}=\gg_0\subsetneqq\gg_1\subsetneqq\cdots\subsetneqq\gg_m=\gg$,  
	and the corresponding Vergne mapping 
	$\pg_{\alg}\colon\gg^*\to\Gr_{\alg}(\gg)$. 
	If $\Xi\subseteq\gg^*$ is a subset with the property that $\Xi\cap\Xi_{\mathbf{k}}$ is a relatively closed subset of $\Xi$ for every $\mathbf{k}\in J_m$, then the mapping $\pg_{\alg}\vert_\Xi\colon\Xi\to\Gr_{\alg}(\gg)$ is continuous. 
\end{theorem}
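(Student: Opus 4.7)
The plan is to apply Theorem~\ref{Lcont} through the commutative diagram $\pg_{\alg} = \pg \circ \beta$ displayed just before the statement. The map $\beta\colon\gg^* \to (\wedge^2\gg)^*$ is linear, hence continuous, and from the definition $B_\xi(x,y) = \langle\xi,[x,y]\rangle$ it is immediate that $\gg_j(\xi\vert_{\gg_j}) = N((B_\xi)_j) = N((\beta(\xi))_j)$ for every $\xi \in \gg^*$ and every $j = 1,\dots,m$. Comparing the definition of $\Xi_{\mathbf{k}}$ in~\eqref{Xiks} with the definition of $(\wedge^2\gg)^*_{\mathbf{k}}$ in Theorem~\ref{Lcont}, this yields the key identity
$$\Xi_{\mathbf{k}} = \beta^{-1}((\wedge^2\gg)^*_{\mathbf{k}}) \quad \text{for every }\mathbf{k} \in J_m.$$

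Next, I would establish the continuity of $\pg$ on each stratum $(\wedge^2\gg)^*_{\mathbf{k}}$ separately by applying Theorem~\ref{Lcont} with $S := (\wedge^2\gg)^*_{\mathbf{k}}$: for any $\mathbf{k}' \in J_m$, the intersection $S \cap (\wedge^2\gg)^*_{\mathbf{k}'}$ equals $S$ when $\mathbf{k}' = \mathbf{k}$ and is empty otherwise, so the relative closedness hypothesis of Theorem~\ref{Lcont} is trivially satisfied. Composing with $\beta$ and invoking the identity above, I obtain that $\pg_{\alg}$ is continuous on each $\Xi_{\mathbf{k}}$, and in particular on the relative subspace $\Xi \cap \Xi_{\mathbf{k}}$ for every $\mathbf{k} \in J_m$.

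To conclude, observe that $\Xi = \bigsqcup_{\mathbf{k} \in J_m} (\Xi \cap \Xi_{\mathbf{k}})$ is a partition into sets that are relatively closed in $\Xi$ by hypothesis; since $J_m$ is finite, each piece is also relatively open in $\Xi$, being the complement in $\Xi$ of the union of the remaining finitely many closed pieces. The continuity of $\pg_{\alg}\vert_\Xi$ on each relatively clopen piece therefore glues to continuity on all of $\Xi$. The complete solvability of $\gg$ guarantees that $\pg_{\alg}(\xi)$ is a subalgebra of $\gg$ for every $\xi \in \gg^*$ by \cite[Prop. 1.12.10]{Dix74}, so the continuous map $\pg_{\alg}\vert_\Xi\colon\Xi\to\Gr(\gg)$ factors through $\Gr_{\alg}(\gg)$ with its subspace topology, yielding the continuous map in the statement.

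There is no substantive obstacle in this argument: Theorem~\ref{Lcont} does all the heavy lifting, and the proof reduces to a purely formal translation of the hypothesis on $\Xi$ into the hypothesis needed to pull stratumwise continuity back through $\beta$ and glue along the finite partition indexed by $J_m$. The only point requiring care is the finiteness of $J_m$, which is what converts relatively closed pieces into relatively clopen pieces and therefore permits the gluing.
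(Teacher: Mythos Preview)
Your proposal is correct and follows essentially the same approach as the paper: both identify $\Xi_{\mathbf{k}} = \beta^{-1}((\wedge^2\gg)^*_{\mathbf{k}})$, invoke Theorem~\ref{Lcont} to obtain continuity of $\pg$ on each stratum $(\wedge^2\gg)^*_{\mathbf{k}}$, pull this back through the continuous linear map $\beta$, and then glue along the finite partition of $\Xi$ into relatively clopen pieces. The only difference is the order of exposition (the paper presents the partition argument first, then the stratum-wise continuity), which is purely cosmetic.
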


\begin{proof}
The disjoint union 
$$\Xi=\bigcup_{\mathbf{k}\in J_m}\Xi\cap\Xi_{\mathbf{k}}$$
is a finite partition of $\Xi$ into relatively closed subsets, hence these subsets are also relatively open in~$\Xi$. 
Therefore it suffices to prove that the mapping $\pg_{\alg}$ is continuous on each of these relatively open subsets. 
To this end we fix $\mathbf{k}=(k_1,\dots,k_m)\in J_m$ and we claim that the mapping $\pg_{\alg}\vert_{\Xi_{\mathbf{k}}}\colon \Xi_{\mathbf{k}}\to\Gr(\gg)$ is continuous. 

Indeed, we note that, by \eqref{Xiks} and \eqref{isocont_proof_eq1},  
$$\Xi_{\mathbf{k}}
=\{\xi\in\gg^*\mid \dim N(\beta(\xi)\vert_{\gg_j\times\gg_j})=k_j\text{ for }j=1,\dots,m\}
=\beta^{-1}((\wedge^2\gg)^*_{\mathbf{k}}).$$
We then obtain the commutative diagram 
$$\xymatrix{ & (\wedge^2\gg)^*_{\mathbf{k}} \ar[d]^{\pg\vert_{ (\wedge^2\gg)^*_{\mathbf{k} }}}\\
\Xi_{\mathbf{k}} \ar[r]_{\pg_{\alg}\vert_{\Xi_{\mathbf{k}}}} \ar[ur]^{\beta\vert_{\Xi_{\mathbf{k}}}} & \Gr(\gg)}$$
where $\beta\vert_{\Xi_{\mathbf{k}}}$ is continuous since 
$\beta\colon\gg^*\to(\wedge^2\gg)^*$ is continuous, 
while $\pg\vert_{ (\wedge^2\gg)^*_{\mathbf{k} }}$ is continuous by Theorem~\ref{Lcont}. 
Thus $\pg_{\alg}\vert_{\Xi_{\mathbf{k}}}\colon \Xi_{\mathbf{k}}\to\Gr(\gg)$ is a composition of two continuous mappings, hence it is in turn continuous. 
This proves our claim, and we are done. 
\end{proof}

\begin{remark}
	\normalfont 
	In Theorem~\ref{polcont}, the Vergne mapping $\pg_{\alg}\colon\gg^*\to\Gr_{\alg}(\gg)$ is actually a distinguished selection of 
	the upper semicontinuous map 
	$\Pg\colon\gg^*\to 2^{\Gr_{\alg}(\gg)}$ (see Proposition~\ref{uppsemi}), 
	and that selection $\pg_{\alg}(\cdot)$ is continuous on every set $\Xi_{\mathbf{k}}\subseteq\gg^*$ 
	for arbitrary $\mathbf{k}\in J_m$. 
\end{remark}

\section{Examples}
\label{Sect7}

\subsection*{Lie algebras with abelian hyperplane ideals}

\begin{example}
\label{ax+b}
Let $\gg$ be a completely solvable Lie algebra over $\KK$ which has an abelian ideal $\ag\subsetneqq\gg$ with $\dim(\gg/\ag)=1$. 
Then there exists a Jordan-H\"older sequence 
$$\{0\}=\gg_0\subsetneqq\gg_1\subsetneqq\cdots\subsetneqq\gg_m=\gg$$
with $\gg_{m-1}=\ag$. 

We compute the corresponding Vergne polarization mapping 
$$(\forall\xi\in\gg^*)\quad 
\pg_{\alg}(\xi)=\gg_1(\xi\vert_{\gg_1})+\cdots
+\gg_{m-1}(\xi\vert_{\gg_{m-1}})+\gg(\xi).$$
To this end we note that for $j=1,\dots,m-1$ the Lie algebra $\gg_j$ is is abelian since it is contained in the abelian ideal $\ag$,  
hence for arbitrary $\xi\in\gg^*$ we have $\gg_j(\xi\vert_{\gg_j})=\gg_j$. 
Therefore 
\begin{equation}
\label{ax+b_eq1}
\pg_{\alg}(\xi)=\ag+\gg(\xi).
\end{equation}
If $\xi\in[\gg,\gg]^\perp$ then $\gg(\xi)=\gg$. 
Now let us assume 
 $\xi\in\gg^*\setminus[\gg,\gg]^\perp$, hence $\gg(\xi)\subsetneqq\gg$. 
We have $\dim(\gg/\pg_{\alg}(\xi))=\dim(\pg_{\alg}(\xi)/\gg(\xi))$ since $\pg_{\alg}(\xi)\in\Pg(\xi)$. 
If $\gg(\xi)\not\subset\ag$ then, by the hypothesis $\dim(\gg/\ag)=1$ along with  \eqref{ax+b_eq1}, we obtain $\pg_{\alg}(\xi)=\gg$. 
On the other hand we know that $\pg_{\alg}(\xi)\in\Pg(\xi)$, hence 
$\dim(\pg_{\alg}(\xi)/\gg(\xi))=\dim(\gg/\pg_{\alg}(\xi))=0$, 
and thus $\gg(\xi)=\pg_{\alg}(\xi)=\gg$, which is a contradiction with the fact that $\xi\in\gg^*\setminus[\gg,\gg]^\perp$. 
Thus if $\xi\in\gg^*\setminus[\gg,\gg]^\perp$ then $\gg(\xi)\subseteq\ag$. 
Then, by \eqref{ax+b_eq1}, we obtain 
\begin{equation}
\label{ax+b_eq2}
\pg_{\alg}(\xi)=
\begin{cases}
\ag &\text{ if }\xi\in\gg^*\setminus[\gg,\gg]^\perp,\\
\gg &\text{ if }\xi\in[\gg,\gg]^\perp. 
\end{cases}
\end{equation}
This illustrates Theorem~\ref{polcont}, which establishes the continuity property of the Vergne polarization mapping $\pg_{\alg}\colon\gg^*\to\Gr_{\alg}(\gg)$  on each of the subsets 
\begin{equation*}
\Xi_{(k_1,\dots,k_m)}=\{\xi\in\gg^*\mid \dim(\gg_j(\xi\vert_{\gg_j}))=k_j\text{ for }j=1,\dots,m\}
\end{equation*}
where $1\le k_j\le j$ for $j=1,\dots,5$. 
In fact, by \eqref{ax+b_eq2}, the Vergne polarization mapping is constant on these subsets of~$\gg^*$. 

By the above computation, if $[\gg,\gg]\ne\{0\}$, then we have $\Xi_{(k_1,\dots,k_m)}\ne\emptyset$ 
if and only if $k_j=j$ for $j=1,\dots,m-1$ and 
$k_m\in\{2,m\}$. 
More specifically, 
\begin{equation*}
\Xi_{(1,\dots,m-1,k_m)}
=\begin{cases}
\gg^*\setminus[\gg,\gg]^\perp &\text{ if }k_m=2,\\
[\gg,\gg]^\perp &\text{ if }k_m=m.
\end{cases}
\end{equation*} 
In particular, the Vergne polarization mapping $\pg_{\alg}\colon\gg^*\to\Gr_{\alg}(\gg)$ is continuous on the open dense subset $\Xi_{(1,\dots,m-1,2)}\subseteq\gg^*$. 
\end{example}

In Example~\ref{ax+b} it turned out that the Vergne polarization mapping is continuous on the maximal domain $\gg^*\setminus[\gg,\gg]^\perp$. 
Therefore the complicated nature of the unitary dual space of the Lie groups of this type (for $\KK=\RR$) is due to the highly non-Hausdorff topology of the quotient topological space of 2-dimensional coadjoint orbits 
$(\gg^*\setminus[\gg,\gg]^\perp)/G$. 
See for instance the threadlike nilpotent Lie groups studied in \cite{ArSoKaSc99}.

We now examine an example that presents a different behaviour: the quotient topological space $(\gg^*\setminus[\gg,\gg]^\perp)/G$ is Hausdorff, 
but the Vergne polarization mapping is not continuous on the whole set  $\gg^*\setminus[\gg,\gg]^\perp$. 

\subsection*{The nilpotent Lie algebra $\gg_{6,15}$} 

\begin{example}
	\label{g615}
	\normalfont
	We consider the 2-step nilpotent real Lie algebra $\gg$ denoted by $\gg_{6,15}$ in \cite{Ni83}, defined by a basis $X_1,X_2,X_3,X_4,X_5,X_6$ satisfying the commutation relations 
	$$[X_6,X_5]=X_3,\ [X_6,X_4]=X_1,\ [X_5,X_4]=X_2.$$
	We consider the Jordan-H\"older sequence 
	$$\gg_0=\{0\}\subsetneqq\gg_1\subsetneqq\gg_2\subsetneqq\gg_3
	\subsetneqq\gg_4\subsetneqq\gg_5\subsetneqq\gg_6=\gg,$$
	where $\gg_j=\spa\{X_i\mid 1\le i\le j\}$ for $j=1,\dots,6$. 
	Then the center of $\gg$ is $\zg:=\gg_3=[\gg,\gg]$. 
	For every $\xi\in\gg^*$ we denote $\xi_j:=\langle\xi,X_j\rangle$ for $j=1,\dots,6$. 
	Similarly, for $\eta\in[\gg,\gg]^*=\gg_3^*$ we denote $\eta_j:=\langle\eta,X_j\rangle$ for $j=1,2,3$. 
	With this notation we define the mapping 
	$$Y\colon[\gg,\gg]^*\to\gg,\quad Y(\eta):=\eta_3X_4-\eta_1X_5+\eta_2X_6$$
	and then we have, cf. \cite{Ni83}, 
	\begin{equation}
	\label{g615_eq1}
	(\forall\xi\in\gg^*\setminus[\gg,\gg]^\perp)\quad 
	\gg(\xi)=\zg\dotplus\RR Y(\xi\vert_{[\gg,\gg]})
	=\spa\{X_1,X_2,X_3,Y(\xi\vert_{[\gg,\gg]})\}, 
	\end{equation} 
	This equality gives the subalgebra denoted by $\n$ in \cite[Rem. 22]{Ou19}, which is thus the isotropy subalgebra rather than a polarization. 
	In fact, as it also follows from the reasoning below, the polarizations of linear functionals on the nilpotent Lie algebra $\gg_{6,15}$ are either 5-dimensional or 6-dimensional. 
	
	We compute the Vergne polarization mapping 
	$$(\forall\xi\in\gg^*)\quad \pg_{\alg}(\xi)=\gg_1(\xi\vert_{\gg_1})+\gg_2(\xi\vert_{\gg_2})+
	\gg_3(\xi\vert_{\gg_3})+\gg_4(\xi\vert_{\gg_4})+\gg_5(\xi\vert_{\gg_5})
	+\gg(\xi).$$
	For $j=1,2,3,4$, the Lie algebra $\gg_j$ is abelian 
	hence $\gg_j(\xi\vert_{\gg_j})=\gg_j$ and 
	then 
	\begin{equation}
	\label{g615_eq2}
	(\forall\xi\in\gg^*)\quad \pg_{\alg}(\xi)=\gg_4+\gg_5(\xi\vert_{\gg_5})+\gg(\xi).
	\end{equation}
	Here $\gg_5=\spa\{X_1,X_3\}\dotplus\spa\{X_2,X_4,X_5\}$ as the direct sum of a 2-dimensional Lie algebra and a 3-dimensional Heisenberg algebra with its center $\RR X_2$, and it then easily follows that 
	\begin{equation*}
	\gg_5(\xi\vert_{\gg_5})
	=\begin{cases}
	\spa\{X_1,X_2,X_3\}&\text{ if }\xi_2\ne0,\\
	\spa\{X_1,X_2,X_3,X_4,X_5\}&\text{ if }\xi_2=0,
	\end{cases}
	=\begin{cases}
	\gg_3&\text{ if }\xi_2\ne0,\\
	\gg_5&\text{ if }\xi_2=0. 
	\end{cases}
	\end{equation*}
	Then, using \eqref{g615_eq1} and \eqref{g615_eq2}, we easily obtain 
	\begin{equation}
	\label{g615_eq3}
	\pg_{\alg}(\xi)
	=\begin{cases}
	\spa\{X_1,X_2,X_3,X_4,-\xi_1X_5+\xi_2X_6\}&\text{ if }\xi_2\ne0,\\
	\spa\{X_1,X_2,X_3,X_4,X_5\}&\text{ if }\xi_2=0. 
	\end{cases}
	\end{equation}
	On the other hand, for $\xi\in\gg^*$, we have by the above computation  $\dim(\gg_j(\xi\vert_{\gg_j}))=j$ if $j=1,2,3,4$, 
	while 
	$$\dim\gg_5(\xi\vert_{\gg_5})=
	\begin{cases}
	3&\text{ if }\xi_2\ne 0,\\
	5&\text{ if }\xi_2=0,
	\end{cases}$$
	and 
	$$\dim\gg(\xi)=
	\begin{cases}
	4&\text{ if }\xi\in\gg^*\setminus\gg_3^\perp,\text{ i.e., }(\xi_1,\xi_2,\xi_3)\in\RR^3\setminus\{(0,0,0)\},\\
	6&\text{ if }\xi\in\gg_3^\perp,\text{ i.e., } \xi_1=\xi_2=\xi_3=0.
	\end{cases}$$
	Let us now find explicit form for the
	subsets 
	\begin{equation*}
	\Xi_{(k_1,k_2,k_3,k_4,k_5,k_6)}=\{\xi\in\gg^*\mid \dim(\gg_j(\xi\vert_{\gg_j}))=k_j\text{ for }j=1,2,3,4,5,6\}
	\end{equation*}
	where $1\le k_j\le j$ for $j=1,2,3,4,5$. 
	The above computation shows that the set  $\Xi_{(k_1,k_2,k_3,k_4,k_5,k_6)}$ is non-empty 
	iff $k_j=j$ for $j=1,2,3,4$ and 
	$(k_5,k_6)\in\{(3,4),(5,4),(5,6)\}$. 
	More specifically, 
	\begin{align*}
	\Xi_{(1,2,3,4,3,4)}&=\{\xi\in\gg^*\mid \xi_2\ne 0\},\\
	\Xi_{(1,2,3,4,5,4)}&=\{\xi\in\gg^*\mid \xi_2=0\text{ and }(\xi_1,\xi_3)\in\RR^2\setminus\{(0,0)\}\},\\
	\Xi_{(1,2,3,4,5,6)}&=\{\xi\in\gg^*\mid \xi_1=\xi_2=\xi_3=0\}.
	\end{align*} 
	By Theorem~\ref{polcont}, the Vergne polarization mapping $\pg_{\alg}\colon\gg^*\to\Gr_{\alg}(\gg)$ is con\-ti\-nuous on each of the subsets
	$\Xi_{(k_1,k_2,k_3,k_4,k_5,k_6)}$. 
	This can also be directly checked, using \eqref{g615_eq3}.
	In particular, $\pg_{\alg}$ is continuous on the open dense subset $\Xi_{(1,2,3,4,3,4)}\subseteq\gg^*$.
	
	We note that $\Xi_{(1,2,3,2,3)}$ is not a maximal domain on which $\pg_{\alg}$ is continuous. 
	For instance, it easily follows by \eqref{g615_eq3} 
	that $\pg_{\alg}$ is continuous on the larger open set 
	$\{\xi\in\gg^*\mid(\xi_1,\xi_2)\in\RR^2\setminus\{(0,0)\}\}$. 
	However, $\pg_{\alg}$ is not continuous on the whole open set  $\gg^*\setminus[\gg,\gg]^\perp=\{\xi\in\gg^*\mid (\xi_1,\xi_2,\xi_3)\in\RR^3\setminus\{(0,0,0)\}\}$. 
	For instance,  it directly follows again by \eqref{g615_eq3}  that the mapping $\pg_{\alg}$ is not continuous on the subset 
	$\{\xi\in\gg^*\mid \xi_1=0,\ (\xi_2,\xi_3)\in\RR^2\setminus\{(0,0)\}\}$. 
\end{example}

In the above example, we now describe the topology of the space of 2-dimensional coadjoint orbits in $\gg^*$, which in particular shows that this topological space is Hausdorff and is homeomorphic to $S^2\times\RR^2$, 
where $S^2$ is the unit sphere in the space~$\RR^3$. 
This topological space was earlier studied by other methods, for instance in \cite[Ex. 6.3.5]{Ec96} and \cite[\S 2]{ArSoKaSc99}. 

\begin{proposition}
	\label{g615_H}
	With the notation of Example~\ref{g615}, the following assertions hold: 
	\begin{enumerate}[{\rm(i)}]
		\item\label{g615_H_item1}
		The polynomial function $C\colon\gg^*\to\RR$, $C(\xi):=\xi_2\xi_6+\xi_3\xi_4-\xi_1\xi_5$ is constant on the coadjoint orbits in~$\gg^*$. 
		\item\label{g615_H_item2}
		The mapping 
		\begin{equation*}
		\Psi\colon (\gg^*\setminus\zg^\perp)/G\to(\zg^*\setminus\{0\})\times\RR, \quad 
		\Psi(G\xi)=(\xi\vert_\zg,C(\xi))
		\end{equation*}
		is well defined and is a homeomorphism. 
	\end{enumerate}
\end{proposition}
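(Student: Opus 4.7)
The approach is as follows. For assertion~\eqref{g615_H_item1}, I would exploit the fact that since $\gg$ is 2-step nilpotent one has $\mathrm{Ad}^*(\exp X)\xi=\xi-\xi\circ\ad X$ for every $X\in\gg$, so the coadjoint orbits are affine subspaces of $\gg^*$. The tangent space to $G\xi$ at $\xi$ is the image of the linear map $\gg\to\gg^*$, $X\mapsto-\xi\circ\ad X$; since the center $\zg$ lies in $\ker\ad$, only $X=aX_4+bX_5+cX_6$ contributes. Using the given bracket relations one computes that such a tangent vector $\zeta$ satisfies $\zeta_j=0$ for $j=1,2,3$ and
\begin{equation*}
(\zeta_4,\zeta_5,\zeta_6)^\top=A_\xi(a,b,c)^\top,\quad\text{where }A_\xi:=\begin{pmatrix}0&-\xi_2&-\xi_1\\\xi_2&0&-\xi_3\\\xi_1&\xi_3&0\end{pmatrix}.
\end{equation*}
A direct pairing check shows that the partial derivatives $(\partial_{\xi_4}C,\partial_{\xi_5}C,\partial_{\xi_6}C)=(\xi_3,-\xi_1,\xi_2)$ annihilate every tangent vector of this form, so $C$ is constant on coadjoint orbits.

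For assertion~\eqref{g615_H_item2}, well-definedness of $\Psi$ is immediate: the restriction $\xi|_\zg$ is orbit-invariant because $\zg$ is central, $C$ is orbit-invariant by \eqref{g615_H_item1}, and the condition $\xi\notin\zg^\perp$ is exactly $\xi|_\zg\ne 0$. For surjectivity, given $(\eta,c)\in(\zg^*\setminus\{0\})\times\RR$ I would exhibit the explicit preimage $\xi$ with $(\xi_1,\xi_2,\xi_3)=(\eta_1,\eta_2,\eta_3)$ and $(\xi_4,\xi_5,\xi_6)=\frac{c}{\eta_1^2+\eta_2^2+\eta_3^2}(\eta_3,-\eta_1,\eta_2)$, which clearly lies in $\gg^*\setminus\zg^\perp$ and satisfies $\Psi(G\xi)=(\eta,c)$. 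For injectivity, suppose $\Psi(G\xi)=\Psi(G\eta)$; setting $\zeta:=\eta-\xi$ we have $\zeta\in\zg^\perp$ and $C(\eta)-C(\xi)=\xi_3\zeta_4-\xi_1\zeta_5+\xi_2\zeta_6=0$. One then checks that the skew-symmetric matrix $A_\xi$ has kernel spanned by $(\xi_3,-\xi_1,\xi_2)^\top$ (nonzero since $\xi|_\zg\ne 0$), and hence its image is the hyperplane in $\RR^3$ defined by that very same equation. Thus $(\zeta_4,\zeta_5,\zeta_6)^\top\in\Img A_\xi$, so $\zeta\in T_\xi(G\xi)$, and since orbits are affine subspaces this gives $\eta\in G\xi$.

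Finally, the map $\widetilde{\Psi}\colon\gg^*\setminus\zg^\perp\to(\zg^*\setminus\{0\})\times\RR$, $\widetilde{\Psi}(\xi):=(\xi|_\zg,C(\xi))$, is polynomial, hence continuous, so $\Psi$ is continuous by the universal property of the quotient topology. The assignment used in the surjectivity step above defines a continuous global section $s\colon(\zg^*\setminus\{0\})\times\RR\to\gg^*\setminus\zg^\perp$ of $\widetilde{\Psi}$, whence $\Psi^{-1}=q\circ s$ is continuous, where $q\colon\gg^*\to\gg^*/G$ is the quotient map. The main obstacle is the injectivity step: one must verify that $\Img A_\xi$ coincides exactly with the hyperplane cut out by the linear form extracted from $C$. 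This is the single computation that ties assertion~\eqref{g615_H_item1} to the orbit-theoretic content of \eqref{g615_H_item2}; once it is in place, everything else is either immediate or an explicit formula.
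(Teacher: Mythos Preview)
Your proof is correct and takes a genuinely different route from the paper's.

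For assertion~(i), the paper quotes the explicit parametrizations of the coadjoint orbits from \cite{Ni83} (splitting into the four cases $\xi_2\ne 0$; $\xi_2=0\ne\xi_1$; $\xi_1=\xi_2=0\ne\xi_3$; $\xi_1=\xi_2=\xi_3=0$) and verifies on each branch that $C$ is constant. You instead use the 2-step nilpotency to identify each orbit with the affine subspace $\xi+\Img A_\xi$ and observe that $C$ is affine along that subspace with vanishing linear part. For the injectivity in~(ii), the paper again reads off from the case-by-case formulas that $(\xi_1,\xi_2,\xi_3,C(\xi))$ determines the orbit, whereas you argue structurally: the vector $(\xi_3,-\xi_1,\xi_2)^\top$ spans $\Ker A_\xi$, hence $\Img A_\xi$ is exactly the hyperplane $\{\xi_3\zeta_4-\xi_1\zeta_5+\xi_2\zeta_6=0\}$, which is the level set condition coming from $C$. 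For the homeomorphism, the paper proves openness by checking that $\Psi\circ q$ is a submersion (its Jacobian has rank~4), while you produce an explicit continuous section $s$ of $\widetilde\Psi$ and write $\Psi^{-1}=q\circ s$.

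Your approach is cleaner: it avoids the fourfold case split and exposes the single linear-algebra fact (image of a nonzero $3\times 3$ skew matrix equals the orthogonal complement of its kernel) that makes everything work. The paper's approach, on the other hand, yields the explicit orbit descriptions as a by-product, which are useful elsewhere in the example.
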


\begin{proof}
	\eqref{g54_H_item1} 
	We recall from \cite{Ni83} that, 
	if we identify $\gg^*$ to $\RR^6$ via the mapping $\xi\mapsto(\xi_j)_{1\le j\le 6}$, then we have the following description of the coadjoint orbits $\Oc_\xi:=G\xi\subseteq\gg^*\simeq\RR^6$ for $\xi\in \gg^*$: 
	\begin{itemize}
		\item If $\xi_2\ne0$, then 
		$$\Oc_\xi=\{(\xi_1,\xi_2,\xi_3,y_4,y_5,\frac{1}{\xi_2}(C(\xi)+\xi_1y_5-\xi_3y_4))\in\RR^6\mid y_4,y_5\in\RR\}$$
		hence $C\vert_{\Oc_\xi}$ is constant by a direct verification. 
		\item If $\xi_2=0\ne\xi_1$ then 
		$$\Oc_\xi=\{(\xi_1,0,\xi_3,y_4,\frac{1}{\xi_1}(-C(\xi)+\xi_3y_4),y_6)\in\RR^6\mid y_4,y_6\in\RR\}$$
		hence $C\vert_{\Oc_\xi}$ is constant again by a direct verification. 
		\item If $\xi_1=\xi_2=0\ne\xi_3$ then 
		$$\Oc_\xi=\{(0,0,\xi_3,\frac{1}{\xi_3}C(\xi),y_5,y_6)\in\RR^6\mid y_5,y_6\in\RR\}$$
		hence $C\vert_{\Oc_\xi}$ is constant again by a direct verification. 
		\item If $\xi_1=\xi_2=\xi_3=0$ then 
		$$\Oc_\xi=\{(0,0,0,\xi_4,\xi_5,\xi_6)\}=\{\xi\}$$
		hence $C\vert_{\Oc_\xi}$ is clearly constant. 
	\end{itemize}
	
	\eqref{g54_H_item2} 
	It follows by Assertion~\eqref{g54_H_item1} that the mapping $\Psi$ is well defined. 
	Moreover, the specific formulas in the proof of  Assertion~\eqref{g54_H_item1} show that if $(\xi_1,\xi_2,\xi_3)\in\RR^3\setminus\{(0,0,0)\}$, then the coadjoint orbit $\Oc_\xi$ is uniquely determined by $\xi_1,\xi_2,\xi_3,C(\xi)$. 
	Since $\zg=\spa\{X_1,X_2,X_3\}$, this shows that the mapping $\Psi$ is injective. 
	To see that $\Psi$ is surjective, it suffices to check that for every $(\xi_1,\xi_2,\xi_3)\in\RR^3\setminus\{(0,0,0)\}$ the function $\varphi_{\xi_1,\xi_2,x_3}\colon \RR^3\to\RR$, $\varphi_{\xi_1,\xi_2,x_3}(\xi_4,\xi_5,\xi_6):=C(\xi_1,\xi_2.\xi_3,\xi_4,\xi_5,x_6)$, is surjective, which is straightforward since  $\varphi_{\xi_1,\xi_2,x_3}$ is a linear function that does not vanish identically. 
	
	We now need the quotient map $q\colon\gg^*\setminus\zg^\perp\to(\gg^*\setminus\zg^\perp)/G$, $q(\xi):=G\xi=\Oc_\xi$. 
	It is well known that this mapping $q$ is continuous and open. 
	On the other hand, it is clear that the composition $\Psi\circ q\colon \gg^*\setminus\zg^\perp\to(\zg^*\setminus\{0\})\times\RR$ is a smooth function, and in particular continuous, hence $\Psi$ is in turn continuous. 
	
	To complete the proof of the fact that $\Psi$ is a homeomorphism, we use again the fact that $q$ is an open mapping, hence it suffices to show that $\Psi\circ q$ is an open mapping. 
	To this end we check that the smooth mapping $\Psi\circ q$ is a submersion. 
	In fact, since $(\Psi\circ q)(\xi_1,\xi_2,\xi_3,\xi_4,\xi_5,x_6)=(\xi_1,\xi_2,x_3,C(\xi))$, it follows that 
	the differential of $\Psi\circ q$ at an arbitrary point 
	$(\xi_1,\xi_2,\xi_3,\xi_4,\xi_5,x_6)\in\RR^6$ with $(\xi_1,\xi_2,x_3)\ne(0,0,0)$ is given by the matrix 
	$$\begin{pmatrix}
	1 &  0 & 0 & 0 & 0 & 0\\
	0 & 1 & 0 & 0 & 0 & 0\\
	0 & 0 & 1 & 0 & 0 & 0\\
	-\xi_5 & \xi_6 & \xi_4 & \xi_3 & -\xi_1 & \xi_2
	\end{pmatrix}
	$$
	whose rank is clearly equal to~4 since $(\xi_1,\xi_2,\xi_3)\ne(0,0,0)$. 
	Therefore the mapping $\Psi\circ q\colon \gg^*\setminus\zg^\perp\to(\zg^*\setminus\{0\})\times\RR$  is a submersion, 
	and this completes the proof. 
\end{proof}

\subsection*{The nilpotent Lie algebra $\gg_{5,4}$} 

\begin{example}
\label{g54}
\normalfont
We consider the nilpotent real Lie algebra $\gg$ denoted by $\gg_{5,4}$ in \cite{Dix58} and \cite{Ni83}, defined by a basis $X_1,X_2,X_3,X_4,X_5$ satisfying the commutation relations 
$$[X_5,X_4]=X_3,\ [X_5,X_3]=X_2,\ [X_4,X_3]=X_1.$$
We consider the Jordan-H\"older sequence 
$$\gg_0=\{0\}\subsetneqq\gg_1\subsetneqq\gg_2\subsetneqq\gg_3\subsetneqq\gg_4\subsetneqq\gg_5=\gg$$
defined by $\gg_j=\spa\{X_i\mid 1\le i\le j\}$ for $j=1,\dots,5$. 
Then the center of $\gg$ is $\zg:=\gg_2$, and moreover $[\gg,\gg]=\gg_3$. 
For every $\xi\in\gg^*$ we denote $\xi_j:=\langle\xi,X_j\rangle$ for $j=1,\dots,5$. 
Similarly, for $\eta\in[\gg,\gg^*]=\gg_3^*$ we denote $\eta_j:=\langle\eta,X_j\rangle$ for $j=1,2,3$. 
With this notation we define the mapping 
$$Y\colon[\gg,\gg]^*\to\gg,\quad Y(\eta):=\eta_3X_3-\eta_2X_4+\eta_1X_5$$
and then we have, cf. \cite{Ni83}, 
\begin{equation}
\label{g54_eq1}
(\forall\xi\in\gg^*\setminus[\gg,\gg]^\perp)\quad 
\gg(\xi)=\zg\dotplus\RR Y(\xi\vert_{[\gg,\gg]})
=\spa\{X_1,X_2,Y(\xi\vert_{[\gg,\gg]})\}, 
\end{equation} 
Now let us consider the Vergne polarization mapping 
$$(\forall\xi\in\gg^*)\quad \pg_{\alg}(\xi)=\gg_1(\xi\vert_{\gg_1})+\gg_2(\xi\vert_{\gg_2})+
\gg_3(\xi\vert_{\gg_3})+\gg_4(\xi\vert_{\gg_4})+\gg_5(\xi).$$
For $j=1,2,3$, the Lie algebra $\gg_j$ is abelian 
hence $\gg_j(\xi\vert_{\gg_j})=\gg_j$ and 
then 
\begin{equation}
\label{g54_eq2}
(\forall\xi\in\gg^*)\quad \pg_{\alg}(\xi)=\gg_3+\gg_4(\xi\vert_{\gg_4})+\gg(\xi).
\end{equation}
Here $\gg_4=\RR X_1\dotplus\spa\{X_1,X_3,X_4\}$ as the direct sum of a 1-dimensional Lie algebra and a 3-dimensional Heisenberg algebra with its center $\RR X_1$, and it then easily follows that 
\begin{equation*}
\gg_4(\xi\vert_{\gg_4})
=\begin{cases}
\spa\{X_1,X_2\}&\text{ if }\xi_1\ne0,\\
\spa\{X_1,X_2,X_3,X_4\}&\text{ if }\xi_1=0,
\end{cases}
=\begin{cases}
\gg_2&\text{ if }\xi_1\ne0,\\
\gg_4&\text{ if }\xi_1=0. 
\end{cases}
\end{equation*}
Then, using \eqref{g54_eq1} and \eqref{g54_eq2}, we easily obtain 
\begin{equation}
\label{g54_eq3}
\pg_{\alg}(\xi)
=\begin{cases}
\spa\{X_1,X_2,X_3,-\xi_2X_4+\xi_1X_5\}&\text{ if }\xi_1\ne0,\\
\spa\{X_1,X_2,X_3,X_4\}&\text{ if }\xi_1=0. 
\end{cases}
\end{equation}
On the other hand, for $\xi\in\gg^*$ we have by the above computation  $\dim(\gg_j(\xi\vert_{\gg_j}))=j$ if $j=1,2,3$, 
while 
$$\dim\gg_4(\xi\vert_{\gg_4})=
\begin{cases}
2&\text{ if }\xi_1\ne 0,\\
3&\text{ if }\xi_1=0,
\end{cases}$$
and 
$$\dim\gg(\xi)=
\begin{cases}
3&\text{ if }\xi\in\gg^*\setminus\gg_3^\perp,\text{ i.e., }(\xi_1,\xi_2,\xi_3)\in\RR^3\setminus\{(0,0,0)\},\\
5&\text{ if }\xi\in\gg_3^\perp,\text{ i.e., } \xi_1=\xi_2=\xi_3=0.
\end{cases}$$
Let us now find explicit form for the subsets
\begin{equation*}
\Xi_{(k_1,k_2,k_3,k_4,k_5)}=\{\xi\in\gg^*\mid \dim(\gg_j(\xi\vert_{\gg_j}))=k_j\text{ for }j=1,2,3,4,5\},
\end{equation*}
where $1\le k_j\le j$ for $j=1,2,3,4,5$. 
The above computation shows that the set $\Xi_{(k_1,k_2,k_3,k_4,k_5)}$  is non-empty
if and only if $k_j=j$ for $j=1,2,3$ and 
 $(k_4,k_5)\in\{(2,3),(4,3),(4,5)\}$. 
More specifically, 
\begin{align*}
\Xi_{(1,2,3,2,3)}&=\{\xi\in\gg^*\mid \xi_1\ne 0\},\\
\Xi_{(1,2,3,4,3)}&=\{\xi\in\gg^*\mid \xi_1=0\text{ and }(\xi_2,\xi_3)\in\RR^2\setminus\{(0,0)\}\},\\
\Xi_{(1,2,3,4,5)}&=\{\xi\in\gg^*\mid \xi_1=\xi_2=\xi_3=0\}.
\end{align*}
By Theorem~\ref{polcont}, the Vergne polarization mapping $\pg_{\alg}\colon\gg^*\to\Gr_{\alg}(\gg)$ is continuous on each of the subsets  $\Xi_{(k_1,k_2,k_3,k_4,k_5)}$. 
This can also be directly checked, using \eqref{g54_eq3}. 
In particular,  $\pg_{\alg}$ is continuous on the open dense subset $\Xi_{(1,2,3,2,3)}\subseteq\gg^*$. 

We note that $\Xi_{(1,2,3,2,3)}$ is not a maximal domain on which $\pg_{\alg}$ is continuous. 
For instance, it easily follows by \eqref{g54_eq3} 
that $\pg_{\alg}$ is continuous on the larger open set 
$\{\xi\in\gg^*\mid(\xi_1,\xi_2)\in\RR^2\setminus\{(0,0)\}\}$. 
However, $\pg_{\alg}$ is not continuous on the whole open set  $\gg^*\setminus[\gg,\gg]^\perp=\{\xi\in\gg^*\mid (\xi_1,\xi_2,\xi_3)\in\RR^3\setminus\{(0,0,0)\}\}$. 
For instance,  it directly follows again by \eqref{g54_eq3}  that the mapping $\pg_{\alg}$ is not continuous on the subset 
$\{\xi\in\gg^*\mid \xi_2=0,\ (\xi_1,\xi_3)\in\RR^2\setminus\{(0,0)\}\}$. 
\end{example}

In the above example, we now describe the topology of the space of 2-dimensional coadjoint orbits in $\gg^*$

\begin{proposition}
\label{g54_H}
With the notation of Example~\ref{g54}, the following assertions hold: 
\begin{enumerate}[{\rm(i)}]
	\item\label{g54_H_item1}
	The polynomial function $C\colon\gg^*\to\RR$, $C(\xi):=2\xi_1\xi_5-2\xi_2\xi_4+\xi_3^2$ is constant on the coadjoint orbits in~$\gg^*$. 
	\item\label{g54_H_item2}
	The mapping 
	\begin{equation*}
	\Psi\colon (\gg^*\setminus[\gg,\gg]^\perp)/G\to(\zg^*\setminus\{0\})\times\RR, \quad 
	\Psi(G\xi)=(\xi\vert_\zg,C(\xi))
	\end{equation*}
	is well defined, continuous, open, and surjective.  
	Moreover, for any coadjoint orbit $\Oc\in(\gg^*\setminus[\gg,\gg]^\perp)/G$ 
	we have $\{\Oc\}\neq\Psi^{-1}(\Psi(\Oc))$ if and only if $\Oc\subseteq\zg^\perp$. 
	If this is the case, then $\Psi^{-1}(\Psi(\Oc))=\{\Oc,-\Oc\}$. 
\end{enumerate}
\end{proposition}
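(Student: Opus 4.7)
The plan is to adapt the strategy of Proposition~\ref{g615_H}.

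For part \eqref{g54_H_item1}, I would verify directly that $C$ is annihilated by the infinitesimal coadjoint action. Since $X_1,X_2$ span the center $\zg$, they act trivially on $\gg^*$. For $i\in\{3,4,5\}$, $(X_i\cdot\xi_j)(\xi)=-\langle\xi,[X_i,X_j]\rangle$, so the commutation relations $[X_5,X_4]=X_3$, $[X_5,X_3]=X_2$, $[X_4,X_3]=X_1$ give $X_3\cdot\xi_4=\xi_1$, $X_3\cdot\xi_5=\xi_2$, $X_4\cdot\xi_3=-\xi_1$, $X_4\cdot\xi_5=\xi_3$, $X_5\cdot\xi_3=-\xi_2$, $X_5\cdot\xi_4=-\xi_3$, and all other derivations of coordinate functions vanish. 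A direct check then shows $X_3\cdot C=X_4\cdot C=X_5\cdot C=0$, and connectedness of $G$ yields the assertion.

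For part \eqref{g54_H_item2}, well-definedness of $\Psi$ is immediate from \eqref{g54_H_item1} together with the centrality of $X_1,X_2$. As in the proof of Proposition~\ref{g615_H}, the quotient map $q\colon\gg^*\setminus[\gg,\gg]^\perp\to(\gg^*\setminus[\gg,\gg]^\perp)/G$ is open and continuous, and $\Psi\circ q(\xi)=(\xi_1,\xi_2,2\xi_1\xi_5-2\xi_2\xi_4+\xi_3^2)$ is polynomial, so $\Psi$ is continuous. For openness I would show that $\Psi\circ q$ is a submersion on $\gg^*\setminus[\gg,\gg]^\perp$; its Jacobian at $\xi$ is
\[
\begin{pmatrix}
1 & 0 & 0 & 0 & 0 \\
0 & 1 & 0 & 0 & 0 \\
2\xi_5 & -2\xi_4 & 2\xi_3 & -2\xi_2 & 2\xi_1
\end{pmatrix},
\]
and the triple $(2\xi_3,-2\xi_2,2\xi_1)$ appearing in the last three columns of its third row is nonzero exactly when $(\xi_1,\xi_2,\xi_3)\neq(0,0,0)$, giving rank~$3$. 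Surjectivity is immediate: for any $(\eta,c)\in(\zg^*\setminus\{0\})\times\RR$ take $\xi_3=0$ and solve $2\eta_1\xi_5-2\eta_2\xi_4=c$, which is possible since $(\eta_1,\eta_2)\neq(0,0)$.

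The substantive step is the fiber analysis. I would split $\gg^*\setminus[\gg,\gg]^\perp$ into $\{\xi\vert_\zg\neq 0\}$ and $\zg^\perp\setminus[\gg,\gg]^\perp=\{\xi_1=\xi_2=0,\ \xi_3\neq 0\}$. On the second subset the infinitesimal coadjoint action of $X_3,X_4,X_5$ fixes $\xi_3$, and already the subflow of $\exp(tX_4)\exp(sX_5)$ shifts $(\xi_4,\xi_5)$ by $(-s\xi_3,t\xi_3)$, which sweeps out $\RR^2$ since $\xi_3\neq 0$; hence the orbit through $\xi$ is $\{(0,0,\xi_3,y_4,y_5):y_4,y_5\in\RR\}$ with $C(\xi)=\xi_3^2$. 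Thus the two distinct orbits corresponding to $\pm\xi_3$ share the same $\Psi$-image and are exchanged by $\xi\mapsto-\xi$, giving $\Psi^{-1}(\Psi(\Oc))=\{\Oc,-\Oc\}$. On $\{\xi\vert_\zg\neq 0\}$ I would integrate the coadjoint flows to identify the orbit of $\xi$ with the set $\{\eta\in\gg^*:\eta_1=\xi_1,\ \eta_2=\xi_2,\ C(\eta)=C(\xi)\}$; the key point is that this level set, given by $\eta_3^2-2\xi_2\eta_4+2\xi_1\eta_5=C(\xi)$, is connected whenever $(\xi_1,\xi_2)\neq(0,0)$, so it coincides with the $2$-dimensional orbit and $\Psi$ is injective there.

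The main obstacle I expect is the orbit parametrization in the case $\xi\vert_\zg\neq 0$: one needs to integrate the nilpotent coadjoint flows of $X_3,X_4,X_5$ carefully enough to show that their combined action sweeps out the full connected level set of the three invariants $\xi_1,\xi_2,C$, so that dimension count plus connectedness forces equality with the orbit.
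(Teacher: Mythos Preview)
Your proposal is correct and would complete the proof; it differs from the paper's argument mainly in how the orbits are identified. For part~\eqref{g54_H_item1} the paper does not compute the infinitesimal action: it first remarks that $C$ arises (via symmetrization) from a central element of the enveloping algebra, and then, because the explicit orbit formulas are needed anyway, it simply quotes the coadjoint orbits from \cite{Ni83} in each of the four cases $\xi_1\neq 0$, $\xi_1=0\neq\xi_2$, $\xi_1=\xi_2=0\neq\xi_3$, $\xi_1=\xi_2=\xi_3=0$, and reads off that $C$ is constant on each. Your derivation via $X_i\cdot C=0$ is equally valid and more self-contained.

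For part~\eqref{g54_H_item2} the continuity, openness (via the same Jacobian), and surjectivity arguments are the same; your rank condition $(\xi_1,\xi_2,\xi_3)\neq(0,0,0)$ is in fact the correct one on the full domain $\gg^*\setminus[\gg,\gg]^\perp$. The real divergence is the fiber analysis on $\{\xi\vert_\zg\neq 0\}$: the paper simply reads off from the cited orbit formulas that $\Oc_\xi$ is determined by $(\xi_1,\xi_2,C(\xi))$, whereas you propose to show that the orbit equals the connected $2$-dimensional level set $\{\eta_1=\xi_1,\ \eta_2=\xi_2,\ C(\eta)=C(\xi)\}$ by a dimension-plus-connectedness argument. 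That argument goes through once you note that (a) the level set is the graph of a smooth function of $(\eta_3,\eta_4)$ or $(\eta_3,\eta_5)$, hence a connected $2$-manifold; (b) $\dim\Oc_\xi=2$ by \eqref{g54_eq1}; and (c) coadjoint orbits of a connected nilpotent Lie group are closed, so $\Oc_\xi$ is both open and closed in the level set. With these ingredients your ``main obstacle'' evaporates: no explicit integration of the flows is needed beyond what you already did for the $\zg^\perp$ case. The paper's route is shorter because it outsources the orbit description to \cite{Ni83}; yours trades that citation for the standard closedness fact about nilpotent coadjoint orbits.
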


\begin{proof}
\eqref{g54_H_item1} 
The polynomial function $C\colon\gg^*\to\RR$ corresponds (via the symmetrization map) to an element in the center of the universal enveloping algebra of $\gg$ by \cite[Prop. 2]{Dix58}, 
It then follows by \cite[Cor. 3.3.3(c)]{CG90} that the function $C$ is constant on the coadjoint orbts in~$\gg^*$. 

This fact can also be obtained by a more concrete method, 
which we now indicate since it will be needed in the proof of Assertion~\eqref{g54_H_item2} below. 
Namely, we recall from \cite{Ni83} that, 
if we identify $\gg^*$ to $\RR^5$ via the mapping $\xi\mapsto(\xi_j)_{1\le j\le 5}$, then we have the following description of the coadjoint orbits $\Oc_\xi:=G\xi\subseteq\gg^*\simeq\RR^5$ for $\xi\in \gg^*$: 
\begin{itemize}
	\item If $\xi_1\ne0$, then 
	$$\Oc_\xi=\{(\xi_1,\xi_2,y_3,y_4,\frac{1}{2\xi_1}(C(\xi)+2\xi_2y_4-y_3^2))\in\RR^5\mid y_3,y_4\in\RR\}$$
	hence $C\vert_{\Oc_\xi}$ is constant by a direct verification. 
	\item If $\xi_1=0\ne\xi_2$ then 
	$$\Oc_\xi=\{(0,\xi_2,y_3,\frac{1}{2\xi_2}(-C(\xi)+y_3^2),y_5)\in\RR^5\mid y_3,y_5\in\RR\}$$
	hence $C\vert_{\Oc_\xi}$ is constant again by a direct verification. 
	\item If $\xi_1=\xi_2=0\ne\xi_3$ then 
	$$\Oc_\xi=\{(0,0,\xi_3,y_4,y_5)\in\RR^5\mid y_4,y_5\in\RR\}$$
	hence $C\vert_{\Oc_\xi}$ is constant again by a direct verification. 
	\item If $\xi_1=\xi_2=\xi_3=0$ then 
	$$\Oc_\xi=\{(0,0,0,\xi_4,\xi_5)\}=\{\xi\}$$
	hence $C\vert_{\Oc_\xi}$ is clearly constant. 
\end{itemize}

\eqref{g54_H_item2} 
By Assertion~\eqref{g54_H_item1}, the mapping $\Psi$ is well defined. 
Moreover, the specific formulas in the second proof of  Assertion~\eqref{g54_H_item1} show that if $(\xi_1,\xi_2)\in\RR^2\setminus\{(0,0)\}$, then the coadjoint orbit $\Oc_\xi$ is uniquely determined by the values $\xi_1,\xi_2,C(\xi)$, hence by $\Psi(\Oc_\xi)$. 
Moreover, the condition $(\xi_1,\xi_2)\in\RR^2\setminus\{(0,0)\}$ is equivalent to $\Oc_\xi\not\subseteq\zg^\perp$. 
On the other hand, if $\xi_1=\xi_2=0$, then $\Oc_\xi=\{(0,0,\xi_3,y_4,y_5)\in\RR^5\mid y_4,y_5\in\RR\}$, 
while $C(\xi)=\xi_3^2$. 
Thus, since  $\zg=\spa\{X_1,X_2\}$, if $\xi,\zeta\in\zg^\perp\setminus[\gg,\gg]^\perp$ and $\Oc_\zeta\ne\Oc_\xi$, 
then we have 
$$\Oc_\zeta=-\Oc_\xi\iff\zeta_3=-\xi_3\iff C(\zeta)=C(\xi)\iff \Psi(\Oc_\zeta)=\Psi(\Oc_\xi).$$
This exactly describes to which extent the mapping $\Psi$ fails to be injective. 
 
To see that $\Psi$ is surjective, it suffices to check that for every $(\xi_1,\xi_2)\in\RR^2\setminus\{(0,0)\}$ the function $\varphi_{\xi_1,\xi_2}\colon \RR^3\to\RR$, $\varphi_{\xi_1,\xi_2}(\xi_3,\xi_4,\xi_5):=C(\xi_1,\xi_2,\xi_3,\xi_4,\xi_5)$, is surjective. 
In fact, we have even $\varphi_{\xi_1,\xi_2}(\{0\}\times\RR^2)=\RR$ by the definition of the function~$C$. 

We now need the quotient map $q\colon\gg^*\setminus[\gg,\gg]^\perp\to(\gg^*\setminus[\gg,\gg]^\perp)/G$, $q(\xi):=G\xi=\Oc_\xi$, which is
continuous and open. 
On the other hand, it is clear that the composition $\Psi\circ q\colon \gg^*\setminus[\gg,\gg]^\perp\to(\zg^*\setminus\{0\})\times\RR$ is a smooth function, and in particular continuous, hence $\Psi$ is in turn continuous. 

To complete the proof of the fact that $\Psi$ is a homeomorphism, we use again the fact that $q$ is an open mapping, hence it suffices to show that $\Psi\circ q$ is an open mapping. 
To this end we check that the smooth mapping $\Psi\circ q$ is a submersion. 
In fact, since $(\Psi\circ q)(\xi_1,\xi_2,\xi_3,\xi_4,\xi_5)=(\xi_1,\xi_2,C(\xi))$, it follows that 
the differential of $\Psi\circ q$ at an arbitrary point 
$(\xi_1,\xi_2,\xi_3,\xi_4,\xi_5)\in\RR^5$ with $(\xi_1,\xi_2)\ne(0,0)$ is given by the matrix 
$$\begin{pmatrix}
1 &  0 & 0 & 0 & 0 \\
0 & 1 & 0 & 0 & 0 \\
2\xi_5 & -2\xi_4 & 2\xi_3 & -2\xi_2 & 2\xi_1
\end{pmatrix}
$$
whose rank is clearly equal to~3 since $(\xi_1,\xi_2)\ne(0,0)$. 
This shows that the mapping $\Psi\circ q\colon \gg^*\setminus[\gg,\gg]^\perp\to(\zg^*\setminus\{0\})\times\RR$  is a submersion, 
and the proof is complete. 
\end{proof}

\begin{remark}
\normalfont 
Proposition~\ref{g54_H} in particular shows that the quotient topological space $(\gg^*\setminus[\gg,\gg]^\perp)/G$ is not Hausdorff and its complete regularization is homeomorphic to $S^1\times\RR^2$, 
where $S^1$ is the unit circle in the plane~$\RR^2$. 
This sheds extra light on the topology of the primitive ideal space of the nilpotent Lie group $G_{5,4}$ discussed in \cite[Ex. 1]{ArKa97}. 
In particular  it shows that if we denote by $\Jc$ the closed two-sided ideal of $C^*(G)$ whose primitive ideal space corresponds to the set of 2-dimensional coadjoint orbits $(\gg^*\setminus[\gg,\gg]^\perp)/G$, then $\Jc$ is a quasi-standard $C^*$-algebra in the sense of \cite{ArSo90}. 
\end{remark}

\subsection*{Acknowledgment}
The research of the second-named author was supported by a grant of the  Ministry of Research, Innovation and Digitization, CNCS/CCCDI -- UEFISCDI, project number PN-III-P4-ID-PCE-2020-0878, within PNCDI III.

\end{document}